\theoremstyle{plain}
\newtheorem{theorem}{Theorem}
\newtheorem{lemma}[theorem]{Lemma}
\newtheorem{prop}[theorem]{Proposition}
\newtheorem{cor}[theorem]{Corollary}
\theoremstyle{definition}
\newtheorem{defn}[theorem]{Definition}
\theoremstyle{remark}
\newtheorem{remark}{Remark}
\newtheorem{condition}{Condition}
\newtheoremstyle{cited}%
  {3pt}
  {3pt}
  {\itshape}
  {}
  {\bfseries}
  {.}
  {.5em}
  {\thmname{#1} \thmnumber{#2} \thmnote{\normalfont#3}}
\theoremstyle{cited}
\renewcommand{\tilde}{\widetilde}
\renewcommand{\ell}{l}
\newcommand{\bbm}{\begin{bmatrix}}
\newcommand{\ebm}{\end{bmatrix}}
\newcommand{\bR}{\mathbb{R}}
\newcommand{\bN}{\mathbb{N}}
\definecolor{fern}{RGB}{79,143,0}
\DeclareFontFamily{U}{matha}{\hyphenchar\font45}
\DeclareFontShape{U}{matha}{m}{n}{
      <5> <6> <7> <8> <9> <10> gen * matha
      <10.95> matha10 <12> <14.4> <17.28> <20.74> <24.88> matha12
      }{}
\DeclareSymbolFont{matha}{U}{matha}{m}{n}
\DeclareFontFamily{U}{mathx}{\hyphenchar\font45}
\DeclareFontShape{U}{mathx}{m}{n}{
      <5> <6> <7> <8> <9> <10>
      <10.95> <12> <14.4> <17.28> <20.74> <24.88>
      mathx10
      }{}
\DeclareSymbolFont{mathx}{U}{mathx}{m}{n}
\DeclareMathDelimiter{\vvvert}{0}{matha}{"7E}{mathx}{"17}
\DeclarePairedDelimiterX{\normi}[1]
  {\vvvert}
  {\vvvert}
  {\ifblank{#1}{\:\cdot\:}{#1}}
\title{Generative modeling via tensor train sketching}
\author[]{YoonHaeng Hur\textsuperscript{$\ast$}
\and
Jeremy G Hoskins\textsuperscript{$\ast$}
\and
Michael Lindsey\textsuperscript{$\dagger$}
\and 
E.M. Stoudenmire\textsuperscript{$\ddagger$}
\and
Yuehaw Khoo\textsuperscript{$\ast$}
}
\email{yoonhaenghur@uchicago.edu}
\email{jeremyhoskins@uchicago.edu}
\email{michael.lindsey@cims.nyu.edu}
\email{mstoudenmire@flatironinstitute.org}
\email{ykhoo@uchicago.edu}
\begin{document}

\maketitle
\vspace{-15pt}
\begin{center}
    $^\ast$ \textit{Department of Statistics, University of Chicago}
\end{center}
\begin{center}
    $^\dagger$ \textit{Department of Mathematics, Courant Institute of Mathematical Sciences, New York University}
\end{center}
\begin{center}
    $^\ddagger$ \textit{Center for Computational Quantum Physics, Flatiron Institute}
\end{center}

\begin{abstract}
In this paper, we introduce a sketching algorithm for constructing a tensor train representation of a probability density from its samples. Our method deviates from the standard recursive SVD-based procedure for constructing a tensor train. Instead, we formulate and solve a sequence of small linear systems for the individual tensor train cores. This approach can avoid the curse of dimensionality that threatens both the algorithmic and sample complexities of the recovery problem. Specifically, for Markov models under natural conditions, we prove that the tensor cores can be recovered with a sample complexity that scales logarithmically in the dimensionality. Finally, we illustrate the performance of the method with several numerical experiments.
\end{abstract}

\section{Introduction}
Given independent samples from a probability distribution, learning a \emph{generative model} \cite{ruthotto2021generative} that can produce additional samples is a task of fundamental importance in machine learning and data science. The generative modeling of high-dimensional probability distributions has seen significant recent progress, particularly due to the use of neural-network based parametrizations within both old and new paradigms such as generative adversarial networks (GANs) \cite{goodfellow2014generative}, variational autoencoders (VAE) \cite{kingma2013auto}, and normalizing flows \cites{tabak2010density,rezende2015variational}. Among these three major paradigms, only normalizing flows furnish an analytic formula for the probability density function, and in all cases the computation of downstream quantities of interest can only be achieved via Monte Carlo sampling-based approaches with a relatively low order of convergence. 

More precisely, suppose we are given $N$ independent samples 
\[
(y_1^{(1)}, \ldots, y_d^{(1)}), \ \ldots \ , (y_1^{(N)}, \ldots , y_d^{(N)}) \sim p^\star
\]
drawn from an underlying probability density $p^\star \colon \mathbb{R}^d\rightarrow \mathbb{R}$, our goal is to estimate $p^\star$ from the empirical distribution
\begin{equation}
    \widehat p(x_1,\ldots,x_d) = \frac{1}{N} \sum_{i = 1}^{N} \delta_{(y_1^{(i)}, \ldots, y_d^{(i)})}(x_1,\ldots,x_d),
\end{equation}
where $\delta_{(y_1,\ldots,y_d)}$ is the $\delta$-measure supported on $(y_1,\ldots,y_d) \in \mathbb{R}^d$. In this paper, assuming that the underlying density $p^\star$ takes a low-rank tensor train (TT) \cite{oseledets2011tensor} format (known as a matrix product state (MPS) in the physics literature \cites{white1992density, perez2006matrix}), we propose and analyze an algorithm that outputs a TT format of $\hat{p}$ to estimate $p^\star$. Such a TT ansatz has found applications in generative modeling; for instance, \cite{PhysRevX.8.031012} (and its extension \cite{cheng2019tree}) utilizes it to learn the distribution of handwritten digit images. In particular, the TT ansatz offers several benefits. First, generating independent and identically distributed (i.i.d.) samples can be done efficiently by applying conditional distribution sampling \cite{dolgov2020approximation} to the obtained TT format; it can also be used for other downstream tasks, such as direct (deterministic) computation of the moments. However, in order to exploit these benefits, we need to be able to determine the TT representation efficiently. Our algorithm, which we name Tensor Train via Recursive Sketching (TT-RS), provides computationally/statistically efficient estimation of $p^\star$, making the following contributions. 
\begin{itemize}
    \item By a sketching technique, we can estimate the tensor components of the TT via a sequence of linear systems, with a complexity that is linear in both the dimension $d$ and the sample size $N$. 
    \item In the setting of a Markovian density with dimension-independent transition kernels, we prove that the tensor cores can be estimated from a number of samples that scales as $\log(d)$. 
\end{itemize}

\subsection{Prior work}\label{sec:prior_work}
In the literature, generally two types of input data are considered for the recovery of low-rank TTs. In the first case, one assumes that one has the ability to evaluate a $d$-dimensional function $p$ at arbitrary points and seeks to recover $p$ in a TT format with a limited (in particular, polynomial in $d$) number of evaluations. In this context, various methods such as TT-cross \cite{oseledets2010tt}, DMRG-cross \cite{oseledetstt}, and TT completion \cite{steinlechner2016riemannian} have been considered. Furthermore, generalizations such as \cites{wang2017efficient,khoo2021efficient} have been developed to treat densities which have a tensor ring structure. In the second case, which is the case of this paper, one only has access to a fixed collection of empirical samples from the density. Importantly, one does not have access to the value of the density at the given samples. In this case, the ideas of the TT methods that we mentioned earlier cannot be applied directly.

In order to understand how the proposed method differs from the previous methods, we first show that in generative modeling, the nature of the problem is different. More precisely, we are mainly dealing with an estimation problem rather than an approximation problem, where we want to estimate the underlying density $p^\star$ that gives the empirical distribution $\hat{p}$, in terms of a TT. In such a generative modeling setting, suppose one designs an algorithm $\mathcal{A}$ that takes any $d$-dimensional function $p$ and gives $\mathcal{A}(p)$ as a TT, then one would like such $\mathcal{A}$ to minimize the following differences 
\begin{equation*}
    p^\star - \mathcal{A}(\hat{p}) = \underbrace{p^\star - \mathcal{A}(p^\star)}_{\text{approximation error}} + \quad \underbrace{\mathcal{A}(p^\star) - \mathcal{A}(\hat{p})}_{\text{estimation error}}.
\end{equation*}
In generative modeling, $\hat{p}$ suffers from sample variance, which leads to variance in $\mathcal{A}(\hat{p})$ and hence the estimation error. Our focus is to reduce such an error so that there is no curse of dimensionality in estimating $p^\star$. While our method is inspired by sketching ideas from randomized linear algebra \cites{nakatsukasa2021fast,rokhlin2008fast}, which have found applications in the tensor computation field \cites{che2019randomized,daas2021randomized,sun2020low}, there are several notable differences with the current literature.

\begin{itemize}
    \item \textbf{In relation to TT-compression algorithms}: Algorithms based on singular value decomposition (SVD) \cite{oseledets2011tensor} and randomized linear algebra \cites{oseledets2010tt, oseledetstt, shi2021parallel} aim to compress the input function $p$ as a TT such that $\mathcal{A}(p)\approx p$. If such a compression is successful, the above approximation error can be made small, that is, $p^\star - \mathcal{A}(p^\star) \approx 0$, and we also have $\mathcal{A}(\hat{p})\approx \hat{p}$; accordingly, the estimation error becomes $\mathcal{A}(p^\star) - \mathcal{A}(\hat{p}) \approx p^\star - \hat{p}$. Such an estimation error, however, grows exponentially in $d$ when having a fixed number of samples. In this paper, we focus on developing methods that reduce the estimation error due to sample variance such that there is no curse of dimensionality, and such a setting has not been considered in the previous TT-compression literature. 
    
    A recent work \cite{shi2021parallel} determines a TT from values of a high-dimensional function in a computationally distributed fashion. In particular, \cite{shi2021parallel} forms an independent set of equations with sketching techniques from randomized linear algebra to determine the tensor cores in a parallel way. While our method has similarities with \cite{shi2021parallel}, our goal, which is to estimate a TT based on empirical samples of a density, is different from \cite{shi2021parallel}. Therefore, the purpose and means of sketching are fundamentally different. We apply sketching such that each equation in the independent system of equations has size that is constant with respect to the dimension of the problem (unlike the case in \cite{shi2021parallel}), and hence we can estimate the coefficient matrices of the linear system in a statistically efficient way. Furthermore, our use of parallelism in setting up the system is mainly to prevent error accumulation in the estimation of tensor cores.
    
    \item \textbf{In relation to optimization-based algorithms}: A more principled approach for estimating the underlying density $p^\star$ is to perform maximum likelihood estimation, i.e. minimizing the Kullback-Leibler (KL) divergence between the TT ansatz and the empirical distribution \cites{PhysRevX.8.031012, bradley2020modeling, novikov2021tensor}. Although maximum likelihood estimation is statistically efficient in terms of having a low-variance estimator, due to the non-convex nature of the minimization, these methods can suffer from local minima. Furthermore, these iterative procedures require multiple passes over $N$ data points. In contrast, the method described in this paper recovers the cores with a single sweep across all tensor cores.
\end{itemize}

\subsection{Organization}
The paper is organized as follows. First, we briefly describe the main idea of our algorithm in Section~\ref{section:motivation}. Details of the algorithm are presented in Section~\ref{section:main alg} and conditions for the algorithm to work are discussed in Section~\ref{section:exact recovery}. In Section~\ref{section:discrete markov}, we examine how the conditions in Section~\ref{section:exact recovery} lead to exact and stable recovery of tensor cores under a Markov model assumption of the density. In Section~\ref{section:numerical}, we illustrate the performance of our algorithm with several numerical examples. We conclude in Section~\ref{section:conclusion}.

\subsection{Notations}
For an integer $n \in \bN$, we define $[n] = \{1, \ldots, n\}$. Note that for $m, n \in \bN$, a function $c \colon [m] \times [n] \to \bR$ may also be viewed as a matrix of size $m \times n$. We alternate between these two viewpoints often throughout the paper. For any $a, b \in \bR$, we define $a \vee b := \max(a, b)$ and $a \wedge b := \min(a, b)$. For $a,b\in \mathbb{N}$ where $b\geq a$, we may use the ``$\textsf{MATLAB}$ notation'' $a:b$ to denote the set $\{a,a+1\ldots,b\}$.

Our primary objective in this paper is to obtain a TT representation of a $d$-dimensional function. Throughout the remainder of this paper, we fix a $d$-dimensional function $p \colon X_1 \times \cdots \times X_{d} \to \bR$, where $X_1, \ldots, X_{d} \subset \bR$. Unless stated otherwise, $p$ may not be a density, that is, it can take negative values or its integral may not be 1. Whenever we are interested in a density, we will mention explicitly that $p$ is a density or use $p^\star$ instead.

\begin{defn}\label{def:TT}
    We say that $p$ admits a TT representation of rank $(r_1, \ldots, r_{d - 1})$ if there exist $G_1 \colon X_1 \times [r_1] \to \bR$, $G_k \colon [r_{k - 1}] \times X_{k} \times [r_{k}] \to \bR$ for $k = 2, \ldots, d - 1$, and $G_{d} \colon [r_{d - 1}] \times X_{d} \to \bR$ such that 
    \begin{equation*}
        p(x_1, \ldots, x_{d}) = \sum_{\alpha_{1} = 1}^{r_1} \cdots \sum_{\alpha_{d - 1} = 1}^{r_{d - 1}} G_1(x_1, \alpha_1) G_2(\alpha_1, x_2, \alpha_2) \cdots G_{d-1} (\alpha_{d-2},x_{d-1},\alpha_{d-1}) G_d(\alpha_{d - 1}, x_{d})
    \end{equation*}
    for all $(x_1, \ldots, x_d) \in X_1 \times \cdots \times X_d$. In this case, we call $G_1, \ldots, G_{d}$ the {\it cores} of $p.$ For notational simplicity, in the following we often replace the right-hand side of the above equation (and similar expressions involving contractions of several tensors) with $G_1 \circ \cdots \circ G_d$, where `$\circ$' represents the contraction of the cores. We will also sometimes express the TT representation of $p$ diagrammatically as shown in Figure \ref{fig:TT}.
\end{defn}

\begin{figure}[ht]
    \centering
    \includegraphics[width=0.6\columnwidth]{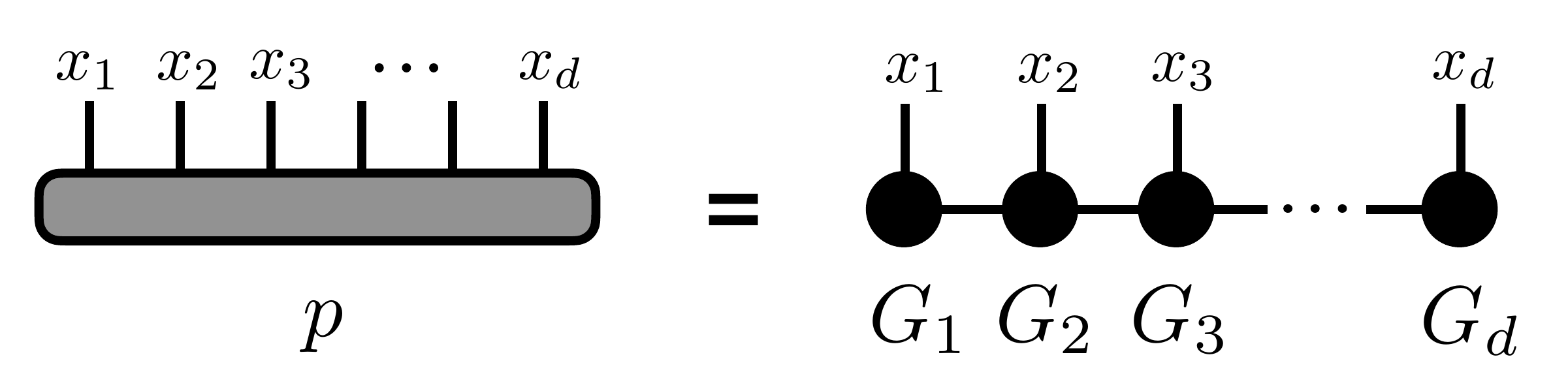}
    \caption{Tensor diagram illustrating the TT representation of $p$ (Definition \ref{def:TT}). The variables $x_i$'s correspond to the outward solid lines in both sides. The solid lines between two adjacent cores on the right-hand side depict the contraction. See \cite{chen2021committor} for a detailed introduction to tensor network diagram notation.}
    \label{fig:TT}
\end{figure}

\begin{remark}
In Definition \ref{def:TT}, the sets $X_1, \ldots, X_d \subset \bR$ may be infinite; in such a case, the representation in Definition \ref{def:TT} is also called a functional TT representation \cites{gorodetsky2019continuous,bigoni2016spectral}.
\end{remark}

Finally, when working with high-dimensional functions, it is often convenient to group the variables into two subsets and think of the resulting object as a matrix. We call these matrices {\it unfolding matrices}. In particular, for $k = 1 , \ldots, d - 1$, we define the $k$-th unfolding matrix by $p(x_1, \ldots, x_{k}; x_{k + 1}, \ldots, x_{d})$; namely, group the first $k$ and the last $d - k$ variables to form rows and columns, respectively. In certain situations, for ease of exposition we write $x_\mathcal{S}$ to denote the joint variable $(x_{i_1},\ldots,x_{i_k})$, where $\mathcal{S} = \{i_1 ,\ldots , i_k \}$ and $1\leq i_1 \leq \cdots \leq i_k \leq d$. For example, we may write $p(x_1, \ldots, x_{k}; x_{k + 1}, \ldots, x_{d})$ as $p(x_{1:k}; x_{k + 1:d})$.

\section{Main idea of the algorithm}
\label{section:motivation}
In this section, we sketch the main idea of the TT-RS algorithm. We start with the following simple observation in the discrete case, i.e., the case where $p \colon [n_1] \times \cdots \times [n_d] \to \bR$ for $n_1, \ldots, n_d \in \bN$. Supposing that $p$ is representable in a TT format with rank $(r, \ldots, r),$ then the $k$-th unfolding matrix $p(x_{1 : k}; x_{k + 1 : d})$ is low-rank. Indeed, we can write 
\begin{equation*}
    p(x_{1 : k}; x_{k + 1 : d}) = \sum_{\alpha_k = 1}^{r} \Phi_k(x_{1 : k}; \alpha_{k}) \Psi_k(\alpha_{k}; x_{k + 1 : d})
\end{equation*}
for some $\Phi_k \colon [n_1] \times \cdots \times [n_k] \times [r] \to \bR$ and $\Psi_k \colon [r] \times [n_{k + 1}] \times \cdots \times [n_{d}] \to \bR$. On the other hand, the TT-format assumption on $p$ implies that there exist $G_1,\dots,G_d$ such that
\begin{align*} 
    \Phi_k(x_{1 : k}, \alpha_{k}) := \sum_{\alpha_1 = 1}^{r} \cdots \sum_{\alpha_{k - 1} = 1}^{r} G_1(x_1, \alpha_1) \cdots G_{k}(\alpha_{k - 1}, x_{k}, \alpha_{k}), \\
    \Psi_k(\alpha_{k}, x_{k + 1 : d}) := \sum_{\alpha_{k + 1} = 1}^{r} \cdots \sum_{\alpha_{d - 1} = 1}^{r} G_{k + 1}(\alpha_k, x_{k + 1}, \alpha_{k + 1}) \cdots G_{d}(\alpha_{d - 1}, x_{d}),
\end{align*}
so that $p = G_1\circ\cdots\circ G_d$. In other words, contractions of the first $k$ and the last $d - k $ cores of $G_1,\ldots,G_d$ yield spanning vectors for the $r$-dimensional column and the row spaces, respectively, of the $k$-th unfolding matrix.

This observation motivates the following procedure to obtain the cores. Suppose that the rank of the $k$-th unfolding matrix of $p$ is $r$. We consider $\Phi_k \colon [n_1] \times \cdots \times [n_k] \times [r] \to \bR$ such that the column space of $\Phi_k(x_{1 : k}; \alpha_k)$ is the same as that of the $k$-th unfolding matrix; for instance, a suitable $\Phi_k$ can be constructed by forming the SVD of the $k$-th unfolding matrix $p(x_{1 : k}; x_{k + 1 : d})$ and setting $\Phi_k(x_{1:k}; \alpha_k)$ to be the matrix of left-singular vectors. Next, we attempt to find cores $G_1, \ldots, G_{d - 1}$ such that 
\begin{equation}
\label{eq:previous}
    \Phi_k(x_{1 : k}, \alpha_{k}) = \sum_{\alpha_1 = 1}^{r} \cdots \sum_{\alpha_{k - 1} = 1}^{r} G_1(x_1, \alpha_1) \cdots G_{k}(\alpha_{k - 1}, x_{k}, \alpha_{k})
\end{equation}
for $k = 1,\ldots ,  d - 1$. Equivalently, we let $G_1 = \Phi_1$ and solve the following equations for the cores $G_k \colon [r_{k - 1}] \times [n_k] \times [r_{k}] \to \bR$ for $k = 2, \ldots,  d - 1$:
\begin{equation}
\label{eq:ours}
    \Phi_k(x_{1 : k}, \alpha_{k}) = \sum_{\alpha_{k - 1} = 1}^{r} \Phi_{k - 1}(x_{1 : k - 1}, \alpha_{k - 1}) G_{k}(\alpha_{k - 1}, x_{k}, \alpha_{k}).
\end{equation}
The above discussion has also been studied in \cites{shi2021parallel,daas2022parallel}. For completeness, we formally state it as follows.
\begin{prop}
\label{prop:CDEs}
    For each $k = 1, \ldots, d - 1$, suppose that the rank of the $k$-th unfolding matrix of $p$ is $r_k$ and define $\Phi_k \colon [n_1] \times \cdots \times [n_k] \times [r_k] \to \bR$ so that the column space of $\Phi_k(x_{1 : k}; \alpha_k)$ is the same as that of the $k$-th unfolding matrix of $p$. Consider the following $d$ matrix equations with unknowns $G_1 \colon [n_1] \times [r_1] \to \bR$, $G_k \colon [r_{k - 1}] \times [n_k] \times [r_{k}] \to \bR$ for $k = 2, \ldots, d - 1$, and $G_{d} \colon [r_{d - 1}] \times [n_d] \to \bR$:
    \begin{equation}
        \label{eq:CDEs}
        \begin{aligned}
            G_1(x_1; \alpha_1) &= \Phi_1(x_1; \alpha_1), \\
            \sum_{\alpha_{k - 1} = 1}^{r_{k - 1}} \Phi_{k - 1}(x_{1 : k - 1}; \alpha_{k - 1}) G_{k}(\alpha_{k - 1}; x_{k}, \alpha_{k}) &= \Phi_k(x_{1 : k - 1}; x_k, \alpha_{k}) \quad k = 2, \ldots, d - 1, \\
            \sum_{\alpha_{d-1} = 1}^{r_{d - 1}} \Phi_{d-1}(x_{1 : d - 1}; \alpha_{d-1}) G_d(\alpha_{d-1}; x_d) &= p(x_{1 : d - 1}; x_d).
        \end{aligned}
    \end{equation}
    Then, each equation of \eqref{eq:CDEs} has a unique solution, and the solutions $G_1, \ldots, G_{d}$ satisfy 
    \begin{equation}
    \label{eq:prop-core}
        p(x_1, \ldots, x_{d}) = \sum_{\alpha_1 = 1}^{r_1} \cdots \sum_{\alpha_{d - 1} = 1}^{r_{d - 1}} G_1(x_1, \alpha_1) \cdots G_{d}(\alpha_{d - 1}, x_{d}).
    \end{equation}
    Hence, by solving these equations we obtain a TT representation of $p$ with cores $G_1, \ldots, G_d$. We call \eqref{eq:CDEs} the Core Determining Equations (CDEs) formed by $\Phi_1, \ldots, \Phi_{d - 1}$.
\end{prop}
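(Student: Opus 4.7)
The plan is to establish unique solvability of each of the $d$ equations in \eqref{eq:CDEs} first, and then to verify \eqref{eq:prop-core} by induction on $k$. The single structural fact I want to use throughout is that each $\Phi_k$ has \emph{full column rank} $r_k$: its $r_k$ columns span the column space of the $k$-th unfolding of $p$, which by hypothesis is $r_k$-dimensional, so the columns are automatically linearly independent. This observation will give uniqueness essentially for free and will also be the key to setting up the existence argument.

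Full column rank of $\Phi_{k-1}$ gives uniqueness immediately: for each fixed $(x_k, \alpha_k)$ the $k$-th equation is a linear system in the unknown vector $G_k(\cdot, x_k, \alpha_k) \in \bR^{r_{k-1}}$ with injective coefficient matrix $\Phi_{k-1}$, and likewise for the final equation with unknown $G_d(\cdot, x_d)$. For existence I would argue as follows. By definition, every column of $\Phi_k(x_{1:k}; \alpha_k)$ lies in the column space of $p(x_{1:k}; x_{k+1:d})$, so one may write $\Phi_k(x_{1:k}, \alpha_k) = \sum_{x_{k+1:d}} c_{\alpha_k}(x_{k+1:d})\, p(x_{1:k}, x_{k+1:d})$ for suitable coefficients $c_{\alpha_k}$. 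Freezing $(x_k, \alpha_k)$ and reading this identity as a function of $x_{1:k-1}$ displays the right-hand side of the $k$-th CDE as a linear combination of columns of the $(k-1)$-th unfolding of $p$, whose column span by hypothesis equals that of $\Phi_{k-1}$. Hence the right-hand side lies in the range of $\Phi_{k-1}$, and a (necessarily unique) solution $G_k$ exists. The same argument, with $\Phi_k$ replaced by $p$ itself, handles the last equation.

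With existence and uniqueness in hand, I would prove \eqref{eq:prop-core} by showing inductively that $G_1 \circ \cdots \circ G_k = \Phi_k$ for all $k = 1, \ldots, d-1$. The base case $k = 1$ is the first equation of \eqref{eq:CDEs}. For the induction step, contracting the inductive hypothesis $G_1 \circ \cdots \circ G_{k-1} = \Phi_{k-1}$ on the right with $G_k$ and invoking the $k$-th CDE yields $G_1 \circ \cdots \circ G_k = \Phi_{k-1} \circ G_k = \Phi_k$. Taking $k = d - 1$ and contracting once more with $G_d$ through the last equation of \eqref{eq:CDEs} then gives $G_1 \circ \cdots \circ G_d = p$, as required. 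The only non-routine step in the whole plan is the existence portion above: one must confirm that the right-hand side of each CDE really lies in $\mathrm{range}(\Phi_{k-1})$, and this is where the hypothesis relating the column space of $\Phi_k$ to the $k$-th unfolding of $p$ is used essentially. Everything else is rank counting plus a short induction.
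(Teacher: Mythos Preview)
Your proof is correct and follows essentially the same approach as the paper's: both use full column rank of $\Phi_{k-1}$ for uniqueness, and both establish existence by expressing $\Phi_k$ as $p$ contracted on the right (you via coefficients $c_{\alpha_k}$, the paper via the pseudoinverse $t_{k+1}$ of the right factor $\Psi_{k+1}$), then observing that for fixed $(x_k,\alpha_k)$ this lies in the column space of the $(k-1)$-th unfolding, hence in the range of $\Phi_{k-1}$. Your explicit induction $G_1\circ\cdots\circ G_k=\Phi_k$ is in fact more detailed than the paper's one-line ``by construction'' for \eqref{eq:prop-core}.
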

Proposition \ref{prop:CDEs}, which we prove in Appendix~\ref{app:CDE}, implies that the cores $G_k$ can be obtained by solving matrix equations. That said, it should be noted that the coefficient matrices of the CDEs, $\Phi_k(x_{1:k}; \alpha_{k})$ for $k=1,\ldots ,d - 1$, are exponentially sized in the dimension $d$. 

In what follows, we take an approach that is similar in spirit to the ``sketching'' techniques commonly employed in the randomized SVD literature \cite{halko2011finding}, which are used to dramatically reduce the computational cost of computing the SVD of several broad classes of matrices. In this paper, however, sketching plays a fundamentally different role. Here, sketching is crucial for the stability of the algorithm, though it also yields an improvement in computational complexity. For our problem, i.e., to determine a TT from samples, the most important function of sketching is to reduce the size of CDEs such that the reduced coefficient matrices can be estimated efficiently with a small sample size $N$. Furthermore, the choice of sketches cannot be arbitrary (e.g., Gaussian random matrices) but must be chosen carefully to reduce the variance of the coefficient matrices as much as possible. The features and requirements of this sketching strategy are particularly apparent in the case of their application to Markov models, which is treated in Section~\ref{section:discrete markov}. More concretely, in order to reduce the size of the CDEs, for some function $S_{k-1} \colon [m_{k-1}] \times [n_1] \times \cdots [n_{k-1}]\rightarrow \bR,$ contracting $S_{k-1}$ against \eqref{eq:CDEs} (i.e., multiplying both sides by $S_{k-1}$ and summing over $x_1,\dots,x_{k-1}$) we find: 
\begin{equation}\label{eq:CDEs_reduced_rough}
    \begin{split}
        & \sum_{\alpha_{k - 1} = 1}^{r_{k - 1}} \left(\sum_{x_1 = 1}^{n_1} \cdots \sum_{x_{k - 1} = 1}^{n_{k - 1}}  S_{k-1}(\beta_{k-1};x_{1:k-1}) \Phi_{k - 1}(x_{1:k-1}; \alpha_{k - 1})\right) G_{k}(\alpha_{k - 1}; x_{k}, \alpha_{k}) \\
        & = \sum_{x_1 = 1}^{n_1} \cdots \sum_{x_{k - 1} = 1}^{n_{k - 1}}  S_{k-1}(\beta_{k-1};x_{1:k-1}) \Phi_k(x_{1:k-1}; x_{k}, \alpha_{k}).
    \end{split}
\end{equation}
Note that the number of rows of the new coefficient matrix on the left-hand side of \eqref{eq:CDEs_reduced_rough} is $m_{k - 1}$. Hence, sketching in this way reduces the number of equations to $m_{k-1} n_k r_k$ when determining each $G_k$. Of course, one must be careful to choose suitable {\it sketch functions} $S_{k-1}$, as mentioned previously. As we shall see, $\Phi_k$'s are also obtained from some right sketching functions $T_{k} \colon [n_{k+1}] \times \cdots [n_d] \times [l_k] \rightarrow \bR$ to be contracted with $p$
over the variables $x_{k+1}, \ldots, x_d$.

In the next section, we present the details of the proposed algorithm, TT-RS, which gives a set of equations of the form \eqref{eq:CDEs_reduced_rough}.

\begin{remark}
We pause here to comment on why we solve \eqref{eq:previous} in the form of \eqref{eq:ours}. To solve \eqref{eq:previous}, one can in principle determine $G_1,\ldots, G_d$ successively, i.e. after determining $G_1,\ldots G_{k-1}$, plug them into \eqref{eq:previous} to solve for $G_k$. In principle, this is the same as solving \eqref{eq:ours} where each $G_1,\ldots,G_d$ is determined independently. But in practice, when $\Phi_k$'s contain noise, determining $G_1,\ldots,G_d$ successively via substitutions leads to noise accumulation. As we will see later, solving the independent set of equations \eqref{eq:ours} is more robust against perturbations on the coefficients $\Phi_k$'s. We again remark that this independent set of equations is similar to the ones presented in a recent work \cite{shi2021parallel}. However, as mentioned in Section \ref{sec:prior_work}, our main algorithm presented in the next section is designed to improve statistical estimation, where it is instrumental to reduce the size of the coefficients $\Phi_k$'s via the sketching using $S_{k - 1}$'s, whereas equations in \cite{shi2021parallel} are exponentially large.
\end{remark}

\section{Description of the main algorithm: TT-RS}\label{section:main alg}

In this section, we present the algorithm TT-RS (Algorithm \ref{alg:1} below) for the case of determining a TT representation of any discrete $d$-dimensional function $p$, where we assume $p \colon [n_1] \times \cdots \times [n_d] \to \bR$ for some $n_1, \ldots, n_d \in \bN$. The stages of Algorithm \ref{alg:1} are depicted in Figure \ref{fig:algorithm}.

\begin{algorithm}
\caption{TT-RS for a discrete function $p$.}
\label{alg:1}
\begin{algorithmic}[1]
\REQUIRE $p \colon [n_1] \times \cdots \times [n_d] \to \bR$ and target ranks $r_1, \ldots, r_{d - 1}$.
\REQUIRE $T_{k} \colon [n_{k}] \times \cdots \times [n_d] \times [\ell_{k - 1}] \to \bR$ with $\ell_{k - 1} \ge r_{k - 1}$ for $k = 2, \ldots, d$.
\REQUIRE $s_1 \colon [m_1] \times [n_1] \to \bR$ and $s_k \colon [m_k] \times [n_k] \times [m_{k - 1}] \to \bR$ for $k = 2, \ldots,  d - 1$.
\STATE $\tilde \Phi_1, \ldots, \tilde \Phi_{d} \leftarrow \textsc{Sketching}(p, T_2, \ldots, T_d, s_1, \ldots, s_{d - 1})$.
\STATE $B_1, \ldots, B_{d} \leftarrow \textsc{Trimming}(\tilde \Phi_1, \ldots, \tilde \Phi_{d}, r_1,\ldots,r_{d-1})$.
\STATE $A_1, \ldots, A_{d - 1} \leftarrow \textsc{SystemForming}(B_1,\ldots,B_{d - 1}, s_1, \ldots, s_{d - 1})$.
\STATE Solve the following $d$ matrix equations via least-squares for the variables $G_1 \colon [n_1] \times [r_1] \to \bR$, $G_k \colon [r_{k - 1}] \times [n_k] \times [r_{k}] \to \bR$ for $k = 2,\ldots,  d - 1$, and $G_{d} \colon [r_{d - 1}] \times [n_d] \to \bR$: \vspace{2mm}
\begin{equation}
    \label{eq:alg-CDEs}
    \begin{aligned}
        G_1 & = B_1, \\
        \sum_{\alpha_{k-1} = 1}^{r_{k - 1}} A_{k-1}(\beta_{k-1}; \alpha_{k-1}) G_{k}(\alpha_{k-1}; x_{k}, \alpha_{k}) & = B_{k}(\beta_{k-1}; x_{k}, \alpha_{k}) \quad k = 2,\ldots,  d - 1, \\
        \sum_{\alpha_{d-1} = 1}^{r_{d - 1}} A_{d-1}(\beta_{d-1}; \alpha_{d-1}) G_d(\alpha_{d-1}; x_d) & = B_d(\beta_{d-1}; x_{d}).
    \end{aligned}
\end{equation}
\RETURN $G_1, \ldots, G_d$ 
\end{algorithmic}
\end{algorithm}

\begin{figure}[ht]
    \centering
    \includegraphics[width=0.8\textwidth]{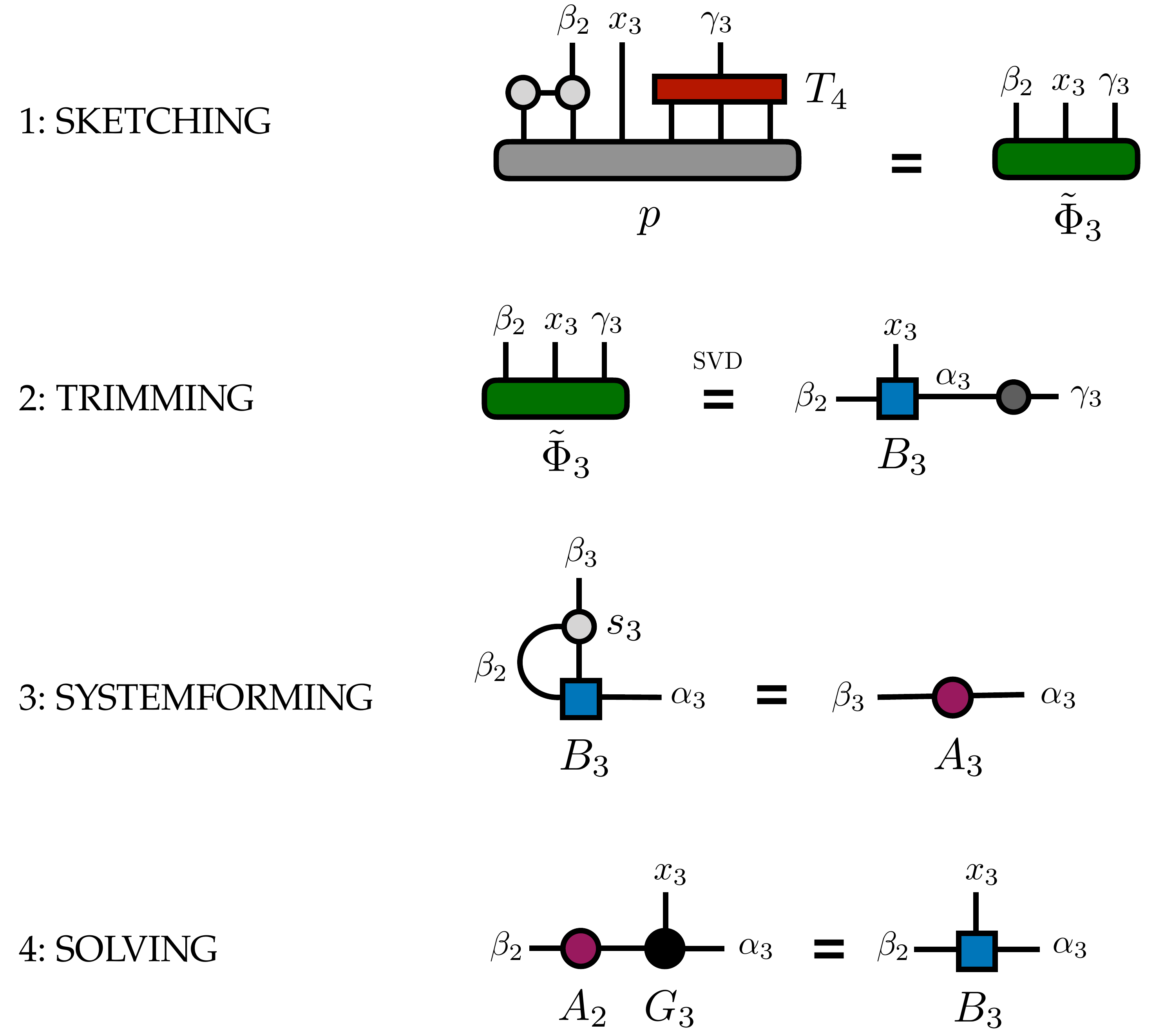}
    \caption{Tensor diagrams illustrating the four steps of the TT-RS algorithm (Algorithm \ref{alg:1}), explicitly showing the case of step $k=3$ for a $d=6$ dimensional distribution. \textsc{Sketching} produces $\tilde \Phi_k$ by applying sketch functions $S_{k - 1}$ and $T_{k + 1}$ to $p$. \textsc{Trimming} generates $B_k$ from $\tilde \Phi_k$ using the SVD. \textsc{SystemForming} outputs $A_k$ based on $B_k$. Lastly, collecting the outputs $A_k$'s and $B_k$'s, we form \eqref{eq:alg-CDEs} and solve for $G_k$'s. See Section \ref{subsec:s+d} for full details.}
    \label{fig:algorithm}
\end{figure}

Algorithm \ref{alg:1} is divided into four parts: \textsc{Sketching} (Algorithm \ref{alg:1-2}), \textsc{Trimming} (Algorithm \ref{alg:1-3}), \textsc{SystemForming} (Algorithm \ref{alg:1-4}), and solving $d$ matrix equations \eqref{eq:alg-CDEs}. As input, algorithm \ref{alg:1} requires functions $T_2, \ldots, T_d$ and $s_1, \ldots, s_{d - 1}$; we call them {\it right} and {\it left sketch functions}, respectively. \textsc{Sketching} applies these sketch functions to $p$ so that $\tilde \Phi_k$ resembles the right-hand side of the reduced CDEs~\eqref{eq:CDEs_reduced_rough}. In particular, if $l_k$ denotes the number of right sketches and we set $l_k = r_k$ for each $k$ where $r_1,\ldots,r_{d-1}$ are the target ranks of the TT, then one could in principle replace the right-hand side of \eqref{eq:CDEs_reduced_rough} with $\tilde \Phi_k$. In practice, we choose $l_k>r_k$, and use \textsc{Trimming} to generate suitable $B_k$'s, to be defined below, from the corresponding $\tilde \Phi_k$'s. These can in turn be used to form a right-hand side in the sense of~\eqref{eq:CDEs_reduced_rough}. Lastly, based on $B_1,\ldots,B_{d-1}$, \textsc{SystemForming} outputs $A_1,\ldots,A_{d-1}$, which resemble the coefficient matrices on the left-hand side of \eqref{eq:CDEs_reduced_rough}. Detailed descriptions of each subroutine are given in the following subsection. In what follows, we constantly refer back to Section~\ref{section:motivation} to motivate the algorithm.

\begin{remark}
The choice of sketch functions is based on two criteria: (i) When $p$ actually has an underlying TT representation, solving the equations \eqref{eq:alg-CDEs} should produce suitable cores $G_1,\ldots,G_d$. A proof of such an exact recovery property is given in Section~\ref{section:exact recovery}, where we also discuss the conditions that the sketch functions have to satisfy. (ii) Let $\hat{G}_1, \ldots, \hat{G}_d$ be the results of TT-RS with $\hat{p}$ as input, where $\hat p$ is an empirical distribution constructed based on i.i.d.\ samples from some density $p^\star$. We would like to have $p^\star \approx \hat{G}_1 \circ \cdots \circ \hat{G}_d$ if $\hat{p}$ is a good approximation to $p^\star$. This requires $A_1, \ldots, A_{d-1}, B_1, \ldots, B_d$ to have a small variance, and the variance of these objects depends on the choice of sketches. We discuss these considerations in Section~\ref{section:discrete markov} for Markov models.
\end{remark}

\begin{remark}
The above algorithm is written only for the case where we consider densities $p$ over a finite state space. However, if $p$ is in $L_2(X_1)\times \dots \times L_2(X_d)$ then one can pass to a suitable tensor product of orthonormal bases in each dimension and work with truncated coefficient tensors instead. We summarize the necessary modifications required for continuous functions in Appendix \ref{app:cttrs}. We call this ``continuous'' version of the algorithm TT-RS-Continuous (TT-RS-C) (Algorithm~\ref{alg:2}). There will, of course, be a new source of error associated with the choice of how to truncate the coefficients. Standard estimates from approximation theory can be used to relate the smoothness of $p$ to the decay of coefficients in each dimension.
\end{remark}

\subsection{Details of the subroutines} 
\label{subsec:s+d}
In this section, we provide details of the three main subroutines used in TT-RS. First, \textsc{Sketching} (Algorithm \ref{alg:1-2}) converts each unfolding matrix of $p$ into a smaller matrix using sketch functions. For each $ k = 2, \ldots,  d - 1$, by contracting the $k$-th unfolding matrix of $p$ with the right and left sketch functions, $T_{k + 1}$ and $S_{k - 1}$, we obtain $\tilde{\Phi}_{k}$, which can be thought of as a three-dimensional tensor of size $\bR^{m_{k - 1} \times n_k \times \ell_{k}}$ as in Step 1 of Figure \ref{fig:algorithm}. This ``sketched'' version of the $k$-th unfolding matrix of $p$ is no longer exponentially large in $d$. In \textsc{Sketching}, each $\bar \Phi_k$ plays the role of $\Phi_k$ in the left-hand side of \eqref{prop:CDEs}, which captures the range of the $k$-th unfolding matrix of $p$. The extra ``bar'' in the notation for $\bar \Phi_k$ is used to distinguish this object from $\Phi_k$, as $\bar \Phi_k(x_{1:k}; \gamma_k)$ has $l_k\geq r_k$ columns, while $\Phi_k(x_{1:k}; \alpha_k)$ only has $r_k$ columns. Such ``oversampling'' \cite{halko2011finding} is standard in randomized linear algebra algorithms for capturing the range of a matrix effectively. Then, as in \eqref{eq:CDEs_reduced_rough}, left sketches $S_k$'s are applied to further reduce $\bar \Phi_k$'s to $\tilde \Phi_k$'s. As mentioned previously, $\tilde \Phi_k$ resembles the right-hand side of \eqref{eq:CDEs_reduced_rough}, though the $\tilde \Phi_k$'s need to be further processed by \textsc{Trimming}. An important remark here is that unlike the right sketch functions $T_2,\ldots T_d$, the left sketch functions $S_2, \ldots, S_{d - 1}$ are constructed sequentially, i.e., $S_k$ is obtained by contracting a small block $s_{k}$ with $S_{k - 1}$; hence, it is a sequential contraction of $s_1, \ldots, s_k$. Such a design is necessary as is shown in \textsc{SystemForming}. Another remark is that Algorithm~\ref{alg:1-2} is presented in a modular fashion for the sake of clarity. In fact, many computations in Algorithm~\ref{alg:1-2} can be re-used by leveraging the fact that $S_k$ is obtained from the contraction of $S_{k-1}$ and $s_k$. Hence, $\tilde \Phi_k$ can be obtained recursively from $\tilde \Phi_{k-1}$.

\begin{algorithm}
\caption{\textsc{Sketching}.}
\label{alg:1-2}
\begin{algorithmic}
\REQUIRE $p$, $T_2, \ldots, T_{d}$, and $s_1, \ldots, s_{d - 1}$ as given in Algorithm \ref{alg:1}.
\FOR{$k = 1$ to $d - 1$}
    \STATE Right sketching: define $\bar{\Phi}_k \colon [n_1] \times \cdots \times [n_k] \times [\ell_k] \to \bR$ as
    \begin{equation*}
        \bar{\Phi}_k(x_{1 : k}, \gamma_{k}) = \sum_{x_{k+1} = 1}^{n_{k+1}} \cdots \sum_{x_{d} = 1}^{n_d} p(x_{1 : k}, x_{k + 1 : d}) T_{k+1}(x_{k+1 : d}, \gamma_{k}).
    \end{equation*}
    \IF {$k > 1$}
        \STATE Left sketching: define $\tilde{\Phi}_k \colon [m_{k - 1}] \times [n_k] \times [\ell_k] \to \bR$ as
        \begin{equation*}
            \tilde{\Phi}_{k}(\beta_{k-1}, x_{k}, \gamma_{k})
            =
            \sum_{x_1 = 1}^{n_1} \cdots \sum_{x_{k - 1} = 1}^{n_{k - 1}} S_{k-1}(\beta_{k-1}, x_{1 : k - 1}) \bar{\Phi}_k(x_{1 : k - 1}, x_{k}, \gamma_{k}).
        \end{equation*}
        \STATE Compute sketch function $S_k \colon [m_k] \times [n_1] \times \cdots \times [n_k] \to \bR$ for the next iteration: 
        \begin{equation*}
            S_k(\beta_k, x_{1 : k}) 
            = 
            \sum_{\beta_{k - 1} = 1}^{m_{k - 1}} s_k(\beta_{k}, x_{k}, \beta_{k - 1}) S_{k - 1}(\beta_{k - 1}, x_{1 : k - 1}).
        \end{equation*}
    \ELSE
        \STATE Define
        \begin{equation*}
            \tilde \Phi_1(x_1,\gamma_1) = \bar \Phi_1(x_1,\gamma_1).
        \end{equation*}
        \STATE Define sketch function
        \begin{equation*}
            S_1(\beta_1, x_1) = s_1(\beta_1, x_1).
        \end{equation*}
    \ENDIF
\ENDFOR
\STATE Left sketching: define $\tilde{\Phi}_{d} \colon [m_{d - 1}] \times [n_d] \to \bR$ as
\begin{equation*}
    \tilde \Phi_{d}(\beta_{d - 1}, x_{d})
    =
    \sum_{x_1 = 1}^{n_1} \cdots \sum_{x_{d - 1} = 1}^{n_{d - 1}} S_{d-1}(\beta_{d-1}, x_{1 : d - 1}) p(x_{1 : d - 1}, x_{d}).
\end{equation*}
\RETURN $\tilde{\Phi}_1, \ldots,  \tilde{\Phi}_{d}$.
\end{algorithmic}
\end{algorithm}

\textsc{Trimming} takes the outputs $\tilde \Phi_1,\ldots,\tilde \Phi_{d-1}$ of \textsc{Sketching} and further process them to have the appropriate rank of the underlying TT using the SVD. This procedure is illustrated in Step 2 in Figure~\ref{fig:algorithm}. It should be noted that this procedure is not necessary if for any $k$, $l_k = r_k$. In this case, one should directly let $B_k = \tilde \Phi_k$ for each $k$.

\begin{algorithm}
\caption{\textsc{Trimming}.}
\label{alg:1-3}
\begin{algorithmic}
\REQUIRE $\tilde{\Phi}_1, \ldots, \tilde{\Phi}_{d}$ from Algorithm \ref{alg:1-2}. 
\REQUIRE Target ranks $r_1, \ldots, r_{d - 1}$ as given in Algorithm \ref{alg:1}.
\FOR{$k = 1$ to $d - 1$}
    \IF {$k = 1$}
        \STATE Compute the first $r_1$ left singular vectors of $\tilde{\Phi}_1(x_1; \gamma_1)$ and define $B_1 \colon [n_1] \times [r_1] \to \bR$ so that these singular vectors are the columns of $B_1(x_1; \alpha_1)$.
    \ELSE
        \STATE Compute the first $r_k$ left singular vectors of $\tilde{\Phi}_{k}(\beta_{k-1}, x_{k}; \gamma_{k})$ and define $B_{k} \colon [m_{k-1}] \times [n_k] \times [r_k] \to \bR$ so that these singular vectors are the columns of $B_{k}(\beta_{k - 1}, x_{k}; \alpha_{k})$.
    \ENDIF 
\ENDFOR
\STATE Let $B_d(\beta_{d-1}, x_d) = \tilde \Phi_d(\beta_{d-1}, x_d)$.
\RETURN $B_1, \ldots, B_{d}$.
\end{algorithmic}
\end{algorithm}

Finally, \textsc{SystemForming} forms the coefficient matrices to solve for $G_1,\ldots,G_d$ from the output $B_1,\ldots, B_{d - 1}$ of \textsc{Trimming} by contracting $s_1, \ldots, s_{d - 1}$ with them, which results in $A_1, \ldots, A_{d - 1}$, respectively, as in Step 3 of Figure~\ref{fig:algorithm}. The matrices $A_1,\ldots,A_{d-1}$ play the role of the coefficient matrices appearing on the left-hand side of \eqref{eq:CDEs_reduced_rough}. As we see in the algorithm, the fact that the sketch functions $S_1,\cdots,S_{d-1}$ are obtained by successive contractions of $s_1,\ldots,s_{d-1}$ allows $A_k$ to be constructed from $B_{k}$. We stress that this is not merely for the sake of efficient computation. In fact, it is important for the correctness of the algorithm, as illustrated in the proof of recovery for Markov models in Section~\ref{section:discrete markov} below.

\begin{algorithm}
\caption{\textsc{SystemForming}.}
\label{alg:1-4}
\begin{algorithmic}
\REQUIRE $B_1, \ldots, B_{d - 1}$ from Algorithm \ref{alg:1-2}. 
\REQUIRE $s_1, \ldots, s_{d - 1}$ as given in Algorithm \ref{alg:1}.
\FOR{$k = 1$ to $d - 1$}
    \IF {$k = 1$}
        \STATE Compute $A_1 \colon [m_1] \times [r_1] \to \bR$:
        \begin{equation*}
            A_1(\beta_1, \alpha_{1}) 
            = \sum_{x_1 = 1}^{n_1} s_1(\beta_1, x_1) B_1(x_1, \alpha_{1}).
        \end{equation*}
    \ELSE
        \STATE Compute $A_k \colon [m_k] \times [r_k] \to \bR$:
        \begin{equation*}
            A_{k}(\beta_{k}, \alpha_{k}) 
            = 
            \sum_{x_{k} = 1}^{n_k} \sum_{\beta_{k - 1} = 1}^{m_{k - 1}} s_{k}(\beta_k, x_{k}, \beta_{k-1}) B_{k}(\beta_{k-1}, x_{k}, \alpha_{k}).
        \end{equation*}
    \ENDIF 
\ENDFOR
\RETURN $A_1,\ldots, A_{d - 1}$.
\end{algorithmic}
\end{algorithm}

\subsection{Complexity}
As noted earlier, we are practically interested in the case where $p$ is an empirical distribution $\hat{p}$ constructed from $N$ i.i.d.\ samples from an underlying density $p^\star$. In such a case, $\hat{p}$ is $N$-sparse. The high-dimensional integrals within TT-RS can be efficiently computed in this case. To see this, suppose that the input $p$ of Algorithm \ref{alg:1} is $N$-sparse, and let $n = \max_{1 \le k \le d} n_k$, $m = \max_{1 \le k \le d - 1} m_k$, $\ell = \max_{1 \le k \le d - 1} \ell_k$, and $r = \max_{1 \le k \le d - 1} r_k$. Note that the complexity of \textsc{Sketching} is $O(m \ell N d)$ since each $\tilde{\Phi}_k$ can be computed in $O(m \ell N)$ time. \textsc{Trimming} requires $O(m n l^2 d)$ operations as each $B_k$ is computed using SVD in $O(m n l^2)$ times. Also, \textsc{SystemForming} is achieved in $O(m^2 n r d)$ time. Lastly, the equations \eqref{eq:alg-CDEs} can be solved in $O(m n r^3 d)$ time. In summary, the total computational cost of TT-RS with $N$-sparse input is 
\begin{equation*}
    O(m \ell N d) + O(m n l^2 d) + O(m^2 n r d) + O(m n r^3 d).
\end{equation*}
Note that this cost is linear in both $n$ and the dimension $d$ of the distribution.

\begin{remark}
    The term ``recursive sketching'' in the name TT-RS is due to the sequential contraction of the left sketch functions $s_1, \ldots, s_{d - 1}$. We remark that it is possible to design an algorithm without such ``recursiveness'', which we call TT-Sketch (TT-S); see Appendix \ref{app:non-recursive sketch} for the details.
\end{remark}

\section{Conditions for exact recovery for TT-RS}\label{section:exact recovery}
The main purpose of this section is to provide sufficient conditions for when TT-RS can recover an underlying TT if the input function $p$ admits a representation by a tensor train. In particular, the following theorem provides a guideline for choosing the sketch functions in TT-RS.
\begin{theorem}
    \label{thm:1}
    Assume the rank (in exact arithmetic) of the $k$-th unfolding matrix of $p$ is $r_k$ for each $k = 1, \ldots, d - 1$. Suppose $T_2, \ldots, T_{d}$ and $s_{1}, \ldots, s_{d-1}$ of Algorithm \ref{alg:1} satisfy the following.
    \begin{itemize}
        \item[(i)] $\bar{\Phi}_k(x_{1:k};\gamma_k)$ and $p(x_{1:k}; x_{k + 1 : d})$ have the same column space for $k = 1, \ldots, d - 1$.
        \item[(ii)] $\tilde{\Phi}_{k}(\beta_{k-1}, x_{k}; \gamma_{k})$ and $\bar{\Phi}_k(x_{1:k}; \gamma_{k})$ have the same row space for $k = 2, \ldots, d - 1$.
        \item[(iii)] $A_k(\beta_k; \alpha_k)$ is rank-$r_k$ for $k = 1, \ldots, d - 1$.
    \end{itemize}
    Then, each equation of \eqref{eq:alg-CDEs} has a unique solution, and the solutions $G_1, \ldots, G_{d}$ are cores of $p$.
\end{theorem}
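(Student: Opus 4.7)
The plan is to establish a structural identity linking the algorithm's intermediate quantities to the true tensor cores up to invertible gauge factors, then use this identity to read off the unique solutions of \eqref{eq:alg-CDEs}, and finally observe that the gauge factors telescope when the solutions are contracted.

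Fix any decomposition $p = G_1^\circ \circ \cdots \circ G_d^\circ$ of rank $(r_1, \ldots, r_{d-1})$ and set $\Phi_k^\circ := G_1^\circ \circ \cdots \circ G_k^\circ$ and $\Psi_k^\circ := G_{k+1}^\circ \circ \cdots \circ G_d^\circ$, so that the $k$-th unfolding of $p$ factorizes as $\Phi_k^\circ \Psi_k^\circ$. The rank hypothesis forces $\Phi_k^\circ$ (viewed as an $n_1\cdots n_k \times r_k$ matrix) to have full column rank $r_k$. From the definition of $\bar\Phi_k$ in \textsc{Sketching} we immediately get $\bar\Phi_k = \Phi_k^\circ W_k$ with $W_k := \Psi_k^\circ T_{k+1} \in \bR^{r_k \times \ell_k}$, and condition (i) is equivalent to $W_k$ having rank $r_k$. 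Introduce the ``$S_k$-sketched left half'' $\Phi_k^S(\beta_k, \alpha_k) := \sum_{x_{1:k}} S_k(\beta_k, x_{1:k}) \Phi_k^\circ(x_{1:k}, \alpha_k)$ and define
\[
\hat{L}_k(\beta_{k-1}, x_k, \alpha_k) := \sum_{\alpha_{k-1}} \Phi_{k-1}^S(\beta_{k-1}, \alpha_{k-1}) G_k^\circ(\alpha_{k-1}, x_k, \alpha_k).
\]
Contracting $\bar\Phi_k = \Phi_k^\circ W_k$ with $S_{k-1}$ on $x_{1:k-1}$ and using the TT recursion for $\Phi_k^\circ$ yields $\tilde\Phi_k = \hat{L}_k W_k$. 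Since $W_k$ has rank $r_k$, condition (ii) is equivalent to the reshape of $\hat{L}_k$ as an $(m_{k-1} n_k) \times r_k$ matrix having column rank $r_k$. \textsc{Trimming} therefore produces a $B_k$ whose columns span the column space of $\hat{L}_k$, giving $B_k = \hat{L}_k P_k$ for some invertible $P_k \in \bR^{r_k \times r_k}$. An analogous calculation, crucially exploiting the fact that $S_k$ is built by contracting $s_k$ against $S_{k-1}$, shows $A_k = \Phi_k^S P_k$; condition (iii) then reads as $\Phi_k^S$ itself having rank $r_k$.

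Substituting $A_{k-1} = \Phi_{k-1}^S P_{k-1}$ and $B_k = \hat{L}_k P_k$ into the $k$-th equation of \eqref{eq:alg-CDEs} and expanding $\hat{L}_k$ via its definition, the equation becomes
\[
\sum_{\alpha_{k-1}'} \Phi_{k-1}^S(\beta_{k-1}, \alpha_{k-1}') \sum_{\alpha_{k-1}} P_{k-1}(\alpha_{k-1}', \alpha_{k-1}) G_k(\alpha_{k-1}, x_k, \alpha_k) = \sum_{\alpha_{k-1}} \Phi_{k-1}^S(\beta_{k-1}, \alpha_{k-1}) \sum_{\alpha_k'} G_k^\circ(\alpha_{k-1}, x_k, \alpha_k') P_k(\alpha_k', \alpha_k).
\]
The full column rank of $\Phi_{k-1}^S$ permits cancellation, yielding the unique least-squares solution $G_k(\alpha_{k-1}, x_k, \alpha_k) = \sum_{\alpha_{k-1}', \alpha_k'} (P_{k-1}^{-1})(\alpha_{k-1}, \alpha_{k-1}') G_k^\circ(\alpha_{k-1}', x_k, \alpha_k') P_k(\alpha_k', \alpha_k)$. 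The boundary cases are handled identically with the convention $P_0 = P_d = I$: for $k=1$, $G_1 = B_1 = G_1^\circ P_1$; for $k=d$, substitution and cancellation give $G_d = P_{d-1}^{-1} G_d^\circ$. Contracting $G_1 \circ \cdots \circ G_d$, each adjacent pair $P_k P_k^{-1}$ on an internal bond collapses to the identity, leaving $G_1^\circ \circ \cdots \circ G_d^\circ = p$.

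The main technical hurdle is the reshaping bookkeeping in the second step: conditions (i)--(iii) are phrased in terms of row/column spaces of different unfoldings, and one must consistently identify these subspaces with the factorizations $\bar\Phi_k = \Phi_k^\circ W_k$ and $\tilde\Phi_k = \hat{L}_k W_k$ in order to extract the rank conditions on $W_k$, $\hat{L}_k$, and $\Phi_k^S$. Notably, the recursive construction of $S_k$ from $s_k$ and $S_{k-1}$ is what makes the coefficient side $A_k$ and the data side $B_k$ compatible through a \emph{common} gauge $P_k$; without this recursive compatibility, no single $P_k$ would link the two sides and the cancellation enabling the telescoping would be impossible.
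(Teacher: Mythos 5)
Your proof is correct, and it takes a genuinely different route from the paper's. The paper proves the theorem by first establishing Lemma~\ref{lem:order-change}, which constructs (exponentially sized) matrices $\Phi_k$ with the correct column spaces and shows that $B_k$ is the $S_{k-1}$-sketch of $\Phi_k$; it then invokes Proposition~\ref{prop:CDEs} to obtain unique solutions of the unsketched CDEs \eqref{eq:CDEs}, and finally shows \eqref{eq:alg-CDEs} is an algebraically consistent reduction of \eqref{eq:CDEs} whose coefficient matrices $A_k$ have full column rank. You instead fix a reference decomposition $G_1^\circ \circ \cdots \circ G_d^\circ$, track the algorithm's outputs explicitly through the factorizations $\bar\Phi_k = \Phi_k^\circ W_k$ and $\tilde\Phi_k = \hat{L}_k W_k$, and identify a common invertible gauge $P_k$ relating $B_k$ to $\hat{L}_k$ and $A_k$ to $\Phi_k^S$; the solutions then come out explicitly as $G_k = P_{k-1}^{-1} \circ G_k^\circ \circ P_k$, with the gauges telescoping upon contraction. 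What your approach buys is a one-to-one correspondence between conditions (i), (ii), (iii) and rank conditions on the factors $W_k$, $\hat{L}_k$, $\Phi_k^S$ respectively, and a fully explicit description of how the algorithm's output relates to any fixed TT decomposition (up to adjacent gauge matrices); it is self-contained and does not route through Proposition~\ref{prop:CDEs}. What the paper's approach buys is modularity: Lemma~\ref{lem:order-change} and Proposition~\ref{prop:CDEs} are reused in the perturbation analysis, and the proof makes the link between \eqref{eq:alg-CDEs} and the unsketched CDEs \eqref{eq:CDEs} explicit, which is conceptually helpful for motivating the algorithm design. Your observation that the recursive construction $S_k = s_k \circ S_{k-1}$ is exactly what guarantees a single gauge $P_k$ compatible with both $A_k$ and $B_k$ is the right structural insight and is also the crux of the paper's \textsc{SystemForming} step, though the paper expresses it as the identity \eqref{eq:coef} rather than as gauge compatibility.
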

We first present a lemma showing that \textsc{Sketching} and \textsc{Trimming} give rise to the right-hand side of \eqref{eq:CDEs_reduced_rough} for determining the cores of $p$. 
\begin{lemma}
\label{lem:order-change}
    Under the assumptions of Theorem \ref{thm:1}, consider the results $B_1, \ldots, B_{d - 1}$ produced by Algorithms \ref{alg:1-2} and \ref{alg:1-3}. The column space of $B_1(x_1; \alpha_1)$ is the same as that of the first unfolding matrix of $p$. Also, for each $ k = 2,\ldots ,  d -1$, there exists a $\Phi_k \colon [n_1] \times \cdots \times [n_k] \times [r_k] \to \bR$ such that the column space of $\Phi_k(x_{1 : k}; \alpha_k)$ is the same as that of the $k$-th unfolding matrix and 
    \begin{equation}
        \label{eq:lemma}
        B_{k}(\beta_{k - 1}, x_k, \alpha_k) = \sum_{x_1 = 1}^{n_1} \cdots \sum_{x_{k - 1} = 1}^{n_{k - 1}} S_{k - 1}(\beta_{k - 1}, x_{1 : k - 1}) \Phi_{k}(x_{1 : k - 1}, x_{k}, \alpha_k).
    \end{equation}
\end{lemma}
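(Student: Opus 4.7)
The plan is to trace the column space of the $k$-th unfolding matrix of $p$ through \textsc{Sketching} and \textsc{Trimming}, identifying where each assumption of Theorem~\ref{thm:1} is used. The base case $k=1$ is immediate: since $\tilde{\Phi}_1 = \bar{\Phi}_1$, assumption (i) directly gives that $\tilde{\Phi}_1(x_1; \gamma_1)$ has the same column space as the first unfolding $p(x_1; x_{2:d})$, which has rank $r_1$. Thus the first $r_1$ left singular vectors of $\tilde{\Phi}_1$ form a basis for this column space, and by construction the columns of $B_1$ span it.

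\textbf{Constructing $\Phi_k$ for $k \ge 2$.} I would first establish $\mathrm{rank}(\tilde{\Phi}_k) = r_k$: assumption (i) gives $\mathrm{rank}(\bar{\Phi}_k) = r_k$, and assumption (ii) says $\tilde{\Phi}_k$ shares a row space with $\bar{\Phi}_k$, hence shares its rank. Consequently, the first $r_k$ left singular vectors assembled into $B_k$ form a basis for the column space of $\tilde{\Phi}_k$. Next, I introduce the linear map
\[
L \colon \bR^{n_1 \cdots n_k} \to \bR^{m_{k-1} n_k}, \quad L(f)(\beta_{k-1}, x_k) = \sum_{x_{1:k-1}} S_{k-1}(\beta_{k-1}, x_{1:k-1}) f(x_{1:k-1}, x_k),
\]
so that the $\gamma_k$-th column of $\tilde{\Phi}_k$ equals $L$ applied to the $\gamma_k$-th column of $\bar{\Phi}_k$. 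Hence $L$ sends the column space of $\bar{\Phi}_k$ onto the column space of $\tilde{\Phi}_k$, and because both subspaces are $r_k$-dimensional, the restriction of $L$ to the column space of $\bar{\Phi}_k$ is a bijection onto the column space of $\tilde{\Phi}_k$. I then define $\Phi_k(\,\cdot\,;\alpha_k)$ as the unique preimage of $B_k(\,\cdot\,;\alpha_k)$ under this bijection. Linear independence of the $r_k$ columns of $B_k$ transfers through the bijection, and by (i) each column of $\Phi_k$ lies in the column space of the $k$-th unfolding of $p$; therefore these columns form a basis of that space. Unpacking $L(\Phi_k(\,\cdot\,;\alpha_k)) = B_k(\,\cdot\,;\alpha_k)$ column by column gives exactly \eqref{eq:lemma}.

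\textbf{Main obstacle.} The substantive step is the bijection of column spaces under $L$, which requires that the left sketch with $S_{k-1}$ not collapse the column space of $\bar{\Phi}_k$; assumption (ii) is precisely what guarantees this rank preservation. Without it, $\tilde{\Phi}_k$ could have rank strictly less than $r_k$, in which case no $\Phi_k$ whose columns lie in the column space of the $k$-th unfolding of $p$ could reproduce $B_k$ via left-sketching by $S_{k-1}$. Everything else in the argument is either a direct unpacking of (i) or a standard property of the truncated SVD used in \textsc{Trimming}.
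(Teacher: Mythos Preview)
Your proof is correct and follows essentially the same approach as the paper: both arguments use assumption (i) to identify the column space of $\bar{\Phi}_k$ with that of the $k$-th unfolding and assumption (ii) to ensure the left sketch $S_{k-1}$ preserves rank, then define $\Phi_k$ as the preimage of $B_k$ inside the column space of $\bar{\Phi}_k$. The only cosmetic difference is that the paper constructs $\Phi_k$ explicitly as $\bar{\Phi}_k\, q_{k+1}$, where $q_{k+1}$ comes from writing $B_k = \tilde{\Phi}_k\, q_{k+1}$ via the SVD, whereas you phrase the same construction abstractly as inverting the bijection $L$ restricted to $\mathrm{col}(\bar{\Phi}_k)$.
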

\begin{proof}
    (i) implies that $\tilde{\Phi}_1(x_1; \gamma_1) = \bar{\Phi}_1(x_1; \gamma_1)$ and $p(x_1; x_{2 : d})$ have the same $r_1$-dimensional column space, which is the same as the column space of $B_1(x_1; \alpha_1)$ by the definition of $B_1$.
    
    For $k = 2, \ldots,  d - 1$, (i) and (ii) imply that $\tilde \Phi_{k}(\beta_{k-1}, x_{k}; \gamma_{k})$ is still rank-$r_k$. Since the columns of $B_k(\beta_{k - 1}, x_{k}; \alpha_{k})$ are the first $r_k$ left singular vectors of $\tilde{\Phi}_{k}(\beta_{k - 1}, x_{k}; \gamma_{k})$, we may write
    \begin{equation*}
        B_{k}(\beta_{k - 1}, x_{k}; \alpha_{k})
        =
        \sum_{\gamma_{k} = 1}^{\ell_k} \tilde \Phi_{k}(\beta_{k - 1}, x_{k}; \gamma_{k}) q_{k + 1}(\gamma_{k}; \alpha_{k})
    \end{equation*}
    for some $q_{k + 1} \colon [\ell_k] \times [r_k] \to \bR$; here, the column space of $q_{k + 1}(\gamma_k; \alpha_k)$ is the same as the row space of $\tilde{\Phi}_{k}(\beta_{k - 1}, x_{k}; \gamma_{k})$. Now, we define $\Phi_k \colon [n_1] \times \cdots [n_k] \times [r_k] \to \bR$ by 
    \begin{equation}
    \label{eq:p_k}
        \Phi_k(x_{1 : k}, \alpha_k) = \sum_{\gamma_{k} = 1}^{\ell_k} \bar{\Phi}_k(x_{1 : k}, \gamma_{k}) q_{k + 1}(\gamma_{k}, \alpha_{k}).
    \end{equation}
    Next, we observe that \eqref{eq:lemma} holds since
    \begin{equation*}
        \begin{split}
            B_{k}(\beta_{k - 1}, x_{k}, \alpha_{k})
            & =
            \sum_{\gamma_{k} = 1}^{\ell_k} \tilde{\Phi}_{k}(\beta_{k - 1}, x_{k},\gamma_{k}) q_{k + 1}(\gamma_{k}, \alpha_{k}) \\
            & =
            \sum_{\gamma_{k} = 1}^{\ell_k}
            \sum_{x_1 = 1}^{n_1} \cdots \sum_{x_{k - 1} = 1}^{n_{k - 1}} S_{k-1}(\beta_{k-1}, x_{1 : k - 1}) \bar{\Phi}_k(x_{1 : k - 1}, x_{k}, \gamma_{k}) q_{k + 1}(\gamma_{k}, \alpha_{k}).
        \end{split}
    \end{equation*}
    We claim that the column space of $\Phi_k(x_{1 : k}; \alpha_k)$ is the same as that of the $k$-th unfolding matrix. Indeed, due to \eqref{eq:p_k}, the column space of $\Phi_k(x_{1 : k}; \alpha_k)$ is contained in that of $\bar{\Phi}_{k}(x_{1 : k}; \gamma_{k})$, which is the column space of the $k$-th unfolding matrix because of (i). Now, it suffices to prove that $\Phi_k(x_{1 : k}; \alpha_k)$ has full column rank. This is true because the column space of $q_{k + 1}(\gamma_k; \alpha_k)$ is the same as the row space of $\tilde{\Phi}_{k}(\beta_{k - 1}, x_{k}; \gamma_{k})$ by construction, which is equivalent to the row space of $\bar{\Phi}_{k}(x_{1 : k}; \gamma_{k})$ due to (ii).
\end{proof}
In Lemma~\ref{lem:order-change}, we showed that \textsc{Sketching} and \textsc{Trimming} give the right-hand sides of \eqref{eq:CDEs_reduced_rough} (i.e., $B_k$ in \eqref{eq:lemma}), without forming the exponentially-sized $\Phi_k$ explicitly. Lastly, by combining \textsc{Sketching} and \textsc{Trimming} with \textsc{SystemForming}, we have a well-defined system of equations for determining $G_1,\ldots,G_d$, as in Algorithm~\ref{alg:1}. This is shown in the following proof for Theorem~\ref{thm:1}. 

\begin{proof}[Proof of Theorem \ref{thm:1}]
    Due to Lemma \ref{lem:order-change}, there exists $\Phi_2, \ldots, \Phi_{d - 1}$ such that \eqref{eq:lemma} holds; also, letting $\Phi_1 = B_1$, we have shown that $\Phi_k(x_{1 : k}; \alpha_k)$ and the $k$-th unfolding matrix have the same column space for $k = 1 , \ldots,  d - 1$. Hence, we can consider CDEs \eqref{eq:CDEs} formed by $\Phi_1, \ldots, \Phi_{d - 1}$. First, we verify that the equations in \eqref{eq:alg-CDEs} are implied by \eqref{eq:CDEs}, obtained by applying sketch functions to both sides of \eqref{eq:CDEs}. The first equation $G_1 = \Phi_1$ is the same in both \eqref{eq:alg-CDEs} and \eqref{eq:CDEs}. For $k = 2 ,\ldots , d - 1$, if we apply $S_{k - 1}$ to both side of the $k$-th equation of \eqref{eq:CDEs}, then 
    \begin{equation}
    \label{eq:tmp}
        \begin{split}
            & \sum_{x_1 = 1}^{n_1} \cdots \sum_{x_{k - 1} = 1}^{n_{k - 1}} S_{k - 1}(\beta_{k - 1}, x_{1 : k - 1}) \sum_{\alpha_{k - 1} = 1}^{r_{k - 1}} \Phi_{k - 1}(x_{1 : k - 1}, \alpha_{k - 1}) G_{k}(\alpha_{k - 1}, x_{k}, \alpha_{k}) \\
            & = \sum_{x_1 = 1}^{n_1} \cdots \sum_{x_{k - 1} = 1}^{n_{k - 1}} S_{k - 1}(\beta_{k - 1}, x_{1 : k - 1}) \Phi_k(x_{1 : k}, \alpha_{k}).
        \end{split}
    \end{equation}
    Note that the right-hand side of \eqref{eq:tmp} is simply $B_{k}(\beta_{k - 1}, x_k, \alpha_k)$, which is the right-hand side of the $k$-th equation of \eqref{eq:alg-CDEs}. We now want to show that the coefficient matrix on the left-hand side of \eqref{eq:tmp} is the coefficient matrix $A_{k - 1}(\beta_{k - 1}, \alpha_{k - 1})$ of the $k$-th equation of \eqref{eq:alg-CDEs}, that is, we want to prove for $k = 2 , \ldots,  d - 1$,
    \begin{equation}
    \label{eq:coef}
        \sum_{x_1 = 1}^{n_1} \cdots \sum_{x_{k - 1} = 1}^{n_{k - 1}} S_{k - 1}(\beta_{k - 1}, x_{1 : k - 1})  \Phi_{k - 1}(x_{1 : k - 1}, \alpha_{k - 1})
        =
        A_{k - 1}(\beta_{k - 1}, \alpha_{k - 1}),
    \end{equation}
    This is implied by Algorithm \ref{alg:1-4}. To see this, for $k = 2$, note that \eqref{eq:coef} amounts to 
    \begin{equation*}
        \sum_{x_1 = 1}^{n_1} s_1(\beta_1, x_1) B_1(x_1, \alpha_{1})
        = A_1(\beta_1, \alpha_{1}), 
    \end{equation*}
    which follows immediately from Algorithm \ref{alg:1-4}. For $2 < k \le d - 1$, \eqref{eq:coef} holds because 
    \begin{equation*}
    \begin{split}
        &\quad \sum_{x_1 = 1}^{n_1} \cdots \sum_{x_{k - 1} = 1}^{n_{k - 1}} S_{k - 1}(\beta_{k - 1}, x_{1 : k - 1})  \Phi_{k - 1}(x_{1 : k - 1}, \alpha_{k - 1}) \\
        & =
        \sum_{x_1 = 1}^{n_1} \cdots \sum_{x_{k - 1} = 1}^{n_{k - 1}} \sum_{\beta_{k - 2} = 1}^{m_{k - 2}} s_{k-1}(\beta_{k - 1}, x_{k - 1}, \beta_{k - 2})S_{k - 2}(\beta_{k - 2}, x_{1 : k - 2}) \Phi_{k - 1}(x_{1 : k-1}, \alpha_{k - 1}) \\
        & = \sum_{x_{k - 1} = 1}^{n_{k - 1}} \sum_{\beta_{k - 2} = 1}^{m_{k - 2}} s_{k-1}(\beta_{k - 1}, x_{k - 1}, \beta_{k - 2}) B_{k - 1}(\beta_{k - 2}, x_{k - 1}, \alpha_{k - 1}) \\
        & = A_{k - 1}(\beta_{k - 1}, \alpha_{k - 1}),
    \end{split}
    \end{equation*}
    where the first equality holds since $S_{k - 1}$ is a contraction of $s_{k - 1}$ and $S_{k - 2}$, the second equality holds because of \eqref{eq:lemma}, and the last equality is given in Algorithm \ref{alg:1-4}. Hence, we have shown that for $k = 2, \ldots, d - 1$, the $k$-th equation of \eqref{eq:alg-CDEs} is indeed obtained by applying $S_{k - 1}$ to both sides of the $k$-th equation of \eqref{eq:CDEs}. Similarly, the last equation of \eqref{eq:alg-CDEs} is obtained by applying $S_{d - 1}$ to both sides of the last equation of \eqref{eq:CDEs}.
    From this it is clear that solutions $G_1, \ldots, G_{d}$ of \eqref{eq:CDEs} formed by $\Phi_1, \ldots, \Phi_{d - 1}$ satisfy \eqref{eq:alg-CDEs}. Now, we use condition (iii) in Theorem \ref{thm:1}; this means that the coefficient matrices $A_1, \ldots, A_{d - 1}$ have full column rank, and thus each equation of \eqref{eq:alg-CDEs} must have a unique solution. Therefore, a unique set of solutions $G_1, \ldots, G_{d}$ of \eqref{eq:CDEs} formed by $\Phi_1, \ldots, \Phi_{d - 1}$ discussed in Proposition \ref{prop:CDEs} gives rise to a unique set of solutions of \eqref{eq:alg-CDEs}. Additionally, as in Proposition \ref{prop:CDEs}, $G_1, \ldots, G_d$ give a TT representation of $p$.
\end{proof}

\section{Application of TT-RS to Markov model}\label{section:discrete markov}

In this section, we demonstrate how model assumptions on $p$ can guide the choice of sketch functions $T_2, \ldots, T_{d}$ and $s_1, \ldots, s_{d - 1}$ to guarantee that the conditions (i)-(iii) of Theorem \ref{thm:1} are satisfied. More precisely, we show that for Markov models, suitable sketch functions exist, and moreover, we give an explicit construction. In Section~\ref{section:exact recovery Markov} we prove that the sketch functions we construct satisfy the requisite conditions. When working with an empirical distribution $\hat p$ which is constructed based on i.i.d.\ samples from some underlying density $p^\star$, TT-RS requires obtaining $B_1,\ldots,B_d$, $A_1,\ldots,A_{d-1}$ by taking expectations over the empirical distribution. Though the variance can be large, in Section~\ref{section:stable recovery Markov}, we show that under certain natural conditions, our choice of sketch functions does not suffer from the ``curse of dimensionality'' when estimating the cores from the empirical distribution. 

Throughout this section, we assume that the input $p$ of TT-RS (Algorithm \ref{alg:1}) is a Markov model, that is, $p$ is a probability density function and satisfies 
\begin{equation}
    \label{eq:markov-condition}
    p(x_1, \ldots, x_d) = p(x_1) p(x_2 | x_1) \cdots p(x_{d} | x_{d - 1}).
\end{equation}
Here, by abuse of notation, for any $i < j$, we denote the marginal density of $(x_i, \ldots, x_j)$ as $p(x_i, \ldots, x_j)$. Depending on the situation, we also use
\begin{equation}
    (\mathcal{M}_{\mathcal{S}}p)(x_\mathcal{S}):=p(x_\mathcal{S}),\quad \mathcal{S}\subset [d]
\end{equation}
to denote the marginalization of $p$ to the variables given by the index set $\mathcal{S}$, which is a $\vert \mathcal{S} \vert$-dimensional function. Also, $p(x_i, \ldots, x_{j} | x_{k})$ denotes the conditional density of $(x_i, \ldots, x_{j})$ given $x_{k}$. For a Markov model $p$, the conditional probabilities $p(x_2 | x_1), \ldots, p(x_d | x_{d - 1})$ in \eqref{eq:markov-condition} are referred to as the transition kernels.

\subsection{Choice of sketch}\label{section:markov sketch} We start with the following simple lemma that shows the low-dimensional nature of the column and row spaces of the unfolding matrices.

\begin{lemma}
\label{lem:markov}
    Suppose $p$ is a Markov model. For any $i \le k < j$,
    \begin{itemize}
        \item[(i)] $p(x_{i : k}; x_{k+1 : j})$ and $p(x_{i : k}; x_{k+1})$ have the same column spaces,
        \item[(ii)] $p(x_{i : k}; x_{k+1 : j})$ and $p(x_{k}; x_{k+1 : j})$ have the same row spaces.
    \end{itemize}
\end{lemma}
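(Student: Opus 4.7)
The plan is to exploit the fundamental Markov factorization in two different ways, one tailored to each statement. Because marginalization of a Markov model over the complement of $\{i, i+1, \dots, j\}$ again gives a Markov model on $x_{i:j}$, I can use the factorization
\eqs{
p(x_{i:j}) = p(x_i) p(x_{i+1} | x_i) \cdots p(x_j | x_{j-1}).
}
Two regroupings of this product will be the workhorses: one that splits the chain at $x_{k+1}$ and one that splits it at $x_k$.

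For (i), I would regroup so that the first factor involves all variables up through $x_{k+1}$:
\eqs{
p(x_{i:k}, x_{k+1:j}) = p(x_{i:k}, x_{k+1})\cdot p(x_{k+2:j} \mid x_{k+1}).
}
Reading this as a statement about the matrix $M := p(x_{i:k}; x_{k+1:j})$, the column indexed by $(x_{k+1}, x_{k+2:j})$ is a scalar multiple (the scalar being $p(x_{k+2:j} \mid x_{k+1})$) of the vector $p(\,\cdot\,,x_{k+1})$, which is a column of $p(x_{i:k}; x_{k+1})$. Hence $\mathrm{col}(M) \subseteq \mathrm{col}(p(x_{i:k}; x_{k+1}))$. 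For the reverse inclusion I would note that since $p(\,\cdot\,|\,x_{k+1})$ is a density, $\sum_{x_{k+2:j}} p(x_{k+2:j}\mid x_{k+1}) = 1$, so summing the columns of $M$ over $x_{k+2:j}$ (with $x_{k+1}$ fixed) recovers exactly $p(x_{i:k}, x_{k+1})$. Thus every column of $p(x_{i:k}; x_{k+1})$ lies in $\mathrm{col}(M)$, giving equality.

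For (ii), I would regroup at $x_k$ instead:
\eqs{
p(x_{i:k}, x_{k+1:j}) = p(x_{i:k})\cdot p(x_{k+1:j} \mid x_k).
}
Now each row of $M$, indexed by $(x_{i:k-1}, x_k)$, is the scalar $p(x_{i:k})$ times the vector $p(x_{k+1:j} \mid x_k)$ (viewed as a function of $x_{k+1:j}$). So $\mathrm{row}(M)$ is contained in $\mathrm{span}\{p(\,\cdot\,|\,x_k) : x_k\}$. For the matrix $p(x_k; x_{k+1:j})$, the row indexed by $x_k$ is $p(x_k) \cdot p(x_{k+1:j} \mid x_k)$, so its row space is contained in the same span. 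To match the spans, I would sum the rows of $M$ over $x_{i:k-1}$ to obtain $p(x_k, x_{k+1:j}) = p(x_k)\, p(x_{k+1:j} \mid x_k)$, which is a row of $p(x_k; x_{k+1:j})$; and symmetrically each row of $p(x_k; x_{k+1:j})$ equals $p(x_k)\, p(\,\cdot\,|\,x_k)$, which lies in $\mathrm{row}(M)$ (by taking suitable linear combinations of rows of $M$, or trivially via the summation above).

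Regarding obstacles: there is nothing genuinely hard here, the whole argument is a two-line manipulation of the Markov factorization. The only small care required is to ensure the two inclusions (column/row space containments) go both ways; I handle this by using that the conditional densities $p(x_{k+2:j}\mid x_{k+1})$ and the marginals behave as normalized summation weights, so the reverse inclusions follow by summing out the appropriate auxiliary variables rather than by any nontrivial linear algebra.
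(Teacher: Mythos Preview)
Your argument is correct and follows essentially the same approach as the paper: both proofs use the Markov conditional-independence factorization at the cut point ($x_{k+1}$ for (i), $x_k$ for (ii)) to see that the extra variables enter only as scalar multipliers of columns/rows. You are merely more explicit than the paper in verifying both inclusions via summation, whereas the paper simply observes that the column/row space is ``not affected'' by the extra variables.
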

\begin{proof}
Since $x_{i : k} \perp x_{k+2 : j} ~|~ x_{k+1}$ (conditional independence), we have that 
\begin{equation*}
    p(x_{i : k}; x_{k+1 : j}) = p(x_{i : k} | x_{k+1}) p(x_{k+2 : j} | x_{k+1}) p(x_{k+1}),
\end{equation*}
which implies that the column space of $p(x_{i : k}; x_{k+1 : j})$ is not affected by $x_{k+2 : j}$. For the same reason, $x_{i : k-1} \perp x_{k+1 : j} ~|~ x_{k}$ implies
\begin{equation*}
    p(x_{i : k}; x_{k+1 : j}) = p(x_{i : k-1} | x_{k}) p(x_{k+1 : j} | x_{k}) p(x_{k}),
\end{equation*}
and hence the row space of $p(x_{i : k}; x_{k+1 : j})$ is not affected by $x_{i : k-1}$.
\end{proof}

An immediate consequence of Lemma \ref{lem:markov} is that each unfolding matrix $p(x_{1 : k}; x_{k + 1 : d})$ may be replaced by $p(x_{1 : k}; x_{k + 1})$ if our main focus is the column space. This motivates a specific choice of sketch functions for a Markov model. For each $k = 1, \ldots,  d - 1$, let $\ell_k = n_{k + 1}$ and define \begin{equation}\label{eq:sketchT_def}
    T_{k+1}(x_{k+1 : d}, \gamma_{k}) = I_{k + 1}(x_{k+1};\gamma_{k}),
\end{equation}
where $I_{k + 1} \colon [n_{k + 1}] \times [n_{k + 1}] \to \mathbb{R}$ such that $I_{k + 1}(x_{k+1};\gamma_{k})$ is the identity matrix. This choice of $T_{k+1}$ yields 
\begin{equation}\label{eq:barpdef_markov}
    \bar{\Phi}_{k}(x_{1 : k}, \gamma_k) = (\mathcal{M}_{1:k+1} p)(x_{1 : k}, \gamma_k).
\end{equation}
In other words, contracting $T_{k+1}$ with the $k$-th unfolding matrix amounts to marginalizing out variables $x_{k + 2}, \ldots, x_{d}$.

Similarly, we let $m_k = n_k$ for each $k = 1, \ldots, d - 1$, and define
\begin{equation}\label{eq:sketchS_def}
    s_{1}(\beta_{1}, x_{1}) = I_1(\beta_{1} ; x_{1}), 
    \quad
    s_{k}(\beta_{k}, x_{k}, \beta_{k-1}) = I_k(\beta_{k}; x_{k}),
\end{equation}
where $I_1 \colon [n_1] \times [n_1] \to \mathbb{R}$ is defined so that $I_1(\beta_1; x_1)$ is the identity matrix, which gives rise to $S_k(\beta_k, x_{1 : k}) = I_k(\beta_{k}, x_{k})$ and 
\begin{equation}\label{eq:tildep_markov}
    \tilde{\Phi}_{1}
    = \mathcal{M}_{\{1,2\}} p, \quad \tilde{\Phi}_{k}
    = \mathcal{M}_{\{k-1,k,k+1\}} p,\ 2 \leq k\leq d-1,
    \quad
    \tilde \Phi_{d} = \mathcal{M}_{\{d-1,d\}} p.
\end{equation}
Again, this choice of left sketch functions leads to $S_{k - 1}$ that marginalizes out variables $x_1, \ldots, x_{k - 2}$. 

In summary, with these sketch functions, \textsc{Sketching} outputs marginals of $p$. Now, it is obvious that Algorithm \ref{alg:1-3} and \ref{alg:1-4} can be done  efficiently; it just performs an SVD on these small marginal matrices and computes both $A_1 = B_1$ and 
\begin{equation*}
    A_{k}(x_{k}, \alpha_{k}) 
    = 
    \sum_{x_{k - 1} = 1}^{n_{k - 1}} B_{k}(x_{k-1}, x_{k}, \alpha_{k})
\end{equation*}
for $k \ge 2$.

\begin{remark}
For the situation where $p$ is a function of continuous variables, in  Appendix~\ref{section:continuous Markov} we discuss how to adapt TT-RS-C (Algorithm~\ref{alg:2}) to the Markov case.
\end{remark}

\subsection{Exact recovery for Markov models}\label{section:exact recovery Markov} In this subsection, we prove that if we use TT-RS (Algorithm \ref{alg:1}) in conjunction with the sketches defined in \eqref{eq:sketchT_def} and \eqref{eq:sketchS_def}, then the resulting algorithm enjoys the exact recovery property. Using Theorem \ref{thm:1}, it suffices to check the choice of sketch functions mentioned in the previous subsection satisfies (i)-(iii) of Theorem \ref{thm:1}. 

\begin{theorem}
    \label{thm:discrete-markov}
    Let $p$ be a discrete Markov model such that the rank (in exact arithmetic) of the $k$-th unfolding matrix of $p$ is $r_k$ for each $k = 1, \ldots, d - 1$. With right and left sketches in \eqref{eq:sketchT_def}, \eqref{eq:sketchS_def}, Algorithm \ref{alg:1} returns $G_1, \ldots, G_{d}$ as cores of $p$.
\end{theorem}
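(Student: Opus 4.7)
The plan is to verify the three hypotheses (i)--(iii) of Theorem~\ref{thm:1} for the specific sketches \eqref{eq:sketchT_def}--\eqref{eq:sketchS_def}, and then invoke Theorem~\ref{thm:1} directly. The Markov structure, together with Lemma~\ref{lem:markov}, makes conditions (i) and (ii) essentially automatic; the real work is in (iii).

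For (i), the choice $T_{k+1}(x_{k+1:d};\gamma_k) = I_{k+1}(x_{k+1};\gamma_k)$ contracts out $x_{k+2},\ldots,x_d$, so $\bar{\Phi}_k(x_{1:k};\gamma_k) = p(x_{1:k};\gamma_k)$ as noted in \eqref{eq:barpdef_markov}. Then Lemma~\ref{lem:markov}(i) with $i=1$, $j=d$ gives that $p(x_{1:k};x_{k+1:d})$ and $p(x_{1:k};x_{k+1})$ share column space, which is exactly the column space of $\bar{\Phi}_k$. For (ii), I would first verify by induction that $S_k(\beta_k,x_{1:k}) = I_k(\beta_k;x_k)$ (the telescoping works because $s_k$ has no $\beta_{k-1}$ dependence and $\sum_{\beta_{j}} I_j(\beta_j;x_j)=1$), so $\tilde{\Phi}_k(\beta_{k-1},x_k;\gamma_k) = p(\beta_{k-1},x_k;\gamma_k)$ as in \eqref{eq:tildep_markov}. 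Two applications of Lemma~\ref{lem:markov}(ii), one with $i=1$ and one with $i=k-1$ (both with $j=k+1$), show that the row spaces of $p(x_{1:k};x_{k+1})$ and $p(x_{k-1},x_k;x_{k+1})$ both coincide with that of $p(x_k;x_{k+1})$, establishing (ii).

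The main obstacle is (iii): showing $A_k$ has rank $r_k$. For $k=1$, $A_1 = B_1$ has orthonormal columns by construction in Algorithm~\ref{alg:1-3}, hence rank $r_1$. For $k\ge 2$, a direct computation using $s_k(\beta_k,x_k,\beta_{k-1}) = I_k(\beta_k;x_k)$ yields $A_k(x_k,\alpha_k) = \sum_{x_{k-1}} B_k(x_{k-1},x_k,\alpha_k)$. Invoking Lemma~\ref{lem:order-change} together with $S_{k-1}(\beta_{k-1},x_{1:k-1}) = I_{k-1}(\beta_{k-1};x_{k-1})$ rewrites this as
\begin{equation*}
  A_k(x_k,\alpha_k) = \sum_{x_{1}=1}^{n_1}\cdots\sum_{x_{k-1}=1}^{n_{k-1}} \Phi_k(x_{1:k-1},x_k;\alpha_k),
\end{equation*}
where $\Phi_k(x_{1:k};\alpha_k)$ has column space equal to that of the $k$-th unfolding matrix. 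Writing the $k$-th unfolding as $p(x_{1:k};x_{k+1:d}) = \Phi_k(x_{1:k};\alpha_k)\,H_k(\alpha_k;x_{k+1:d})$ for some $H_k$, and summing over $x_{1:k-1}$, I get $A_k H_k = p(x_k;x_{k+1:d})$ as matrices. By Lemma~\ref{lem:markov} (column reduction via (i) and row reduction via (ii)), the rank of $p(x_k;x_{k+1:d})$ equals that of $p(x_k;x_{k+1})$, which equals $r_k$ by the same chain applied to the $k$-th unfolding. The rank sandwich $r_k = \operatorname{rank}(A_k H_k) \le \operatorname{rank}(A_k) \le r_k$ then forces $\operatorname{rank}(A_k) = r_k$.

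With (i)--(iii) verified, Theorem~\ref{thm:1} yields that the solutions $G_1,\ldots,G_d$ of \eqref{eq:alg-CDEs} are cores of $p$, completing the proof.
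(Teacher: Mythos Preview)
Your proof is correct and follows the paper's overall strategy: verify conditions (i)--(iii) of Theorem~\ref{thm:1} using Lemma~\ref{lem:markov}, then invoke that theorem. Parts (i) and (ii) match the paper essentially verbatim.

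For (iii) you take a slightly different route. The paper works directly with the SVD factorization from \textsc{Trimming}: it writes $B_k(x_{k-1},x_k,\alpha_k)=\sum_{x_{k+1}} p(x_{k-1},x_k,x_{k+1})\,q_{k+1}(x_{k+1},\alpha_k)$ for a $q_{k+1}$ whose column space equals the row space of $p(x_{k-1},x_k;x_{k+1})$, sums out $x_{k-1}$ to get $A_k(x_k,\alpha_k)=\sum_{x_{k+1}} p(x_k,x_{k+1})\,q_{k+1}(x_{k+1},\alpha_k)$, and then uses Lemma~\ref{lem:markov} to conclude that the column space of $q_{k+1}$ also equals the row space of $p(x_k;x_{k+1})$, forcing $\operatorname{rank}(A_k)=r_k$. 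You instead invoke Lemma~\ref{lem:order-change} to obtain $A_k=\sum_{x_{1:k-1}}\Phi_k$, factor the $k$-th unfolding as $\Phi_k H_k$, sum to get $A_k H_k = p(x_k;x_{k+1:d})$, and finish with a rank sandwich. Your argument is a touch more abstract and reuses Lemma~\ref{lem:order-change} rather than reopening the SVD; the paper's argument is more concrete, staying entirely with the two- and three-variable marginals that the algorithm actually computes. Both are clean and equally valid.
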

\begin{proof}
It suffices to check that (i)-(iii) of Theorem \ref{thm:1} are satisfied. As noted earlier, for each $k = 1, \ldots, d - 1$, \eqref{eq:barpdef_markov} holds.
Hence, $\bar{\Phi}_k(x_{1 : k}; \gamma_{k})$ and $p(x_{1 : k}; x_{k+1 : d})$ have the same column space by Lemma \ref{lem:markov}. Thus, (i) of Theorem \ref{thm:1} holds. Similarly, for each $k = 2, \ldots, d - 1$, \eqref{eq:tildep_markov} holds, hence, $\tilde{\Phi}_{k}(\beta_{k-1}, x_{k}; \gamma_{k})$ and $\bar{\Phi}_k(x_{1 : k}; \gamma_{k})$ have the same row space. Thus, (ii) of Theorem \ref{thm:1} holds.

Lastly, we claim $A_k(x_{k}; \alpha_{k})$ is rank-$r_k$ for all $k = 1, \ldots, d - 1$ (condition (iii) of Theorem~\ref{thm:1}). Clearly, $A_1(x_1; \alpha_1) = B_1(x_1; \alpha_1)$ is rank-$r_1$ by definition. For $k = 2, \ldots, d - 1$, by definition of $B_k$, we can find $q_{k+1} \colon [n_{k+1}] \times [r_k] \to \bR$ such that the column space of $q_{k+1}(x_{k+1}; \alpha_{k})$ is the same as the row space of $p(x_{k-1}, x_{k}; x_{k+1})$ and 
\begin{equation*}
    B_k(x_{k-1}, x_{k}, \alpha_{k}) = \sum_{x_{k + 1} = 1}^{n_{k + 1}} p(x_{k-1}, x_{k}, x_{k+1}) q_{k+1}(x_{k+1}, \alpha_{k}).
\end{equation*}
Hence, 
\begin{equation*}
    \begin{split}
        A_k(x_k, \alpha_k) 
        &= \sum_{x_{k-1} = 1}^{n_{k - 1}} B_k(x_{k-1}, x_{k}, \alpha_{k}) \\
        & = \sum_{x_{k-1} = 1}^{n_{k - 1}} \sum_{x_{k + 1} = 1}^{n_{k + 1}} p(x_{k-1}, x_{k}, x_{k+1}) q_{k+1}(x_{k+1}, \alpha_{k}). \\
        & = \sum_{x_{k + 1} = 1}^{n_{k + 1}} \left(\sum_{x_{k-1} = 1}^{n_{k - 1}} p(x_{k-1}, x_{k}, x_{k+1})\right) q_{k+1}(x_{k+1}, \alpha_{k}) \\
        & = \sum_{x_{k + 1} = 1}^{n_{k + 1}} p(x_{k}, x_{k+1}) q_{k+1}(x_{k+1}, \alpha_{k}) .
    \end{split}
\end{equation*}
By Lemma \ref{lem:markov}, $p(x_{k-1}, x_{k}; x_{k+1})$ and $p(x_{k}; x_{k+1})$ have the same row space. Therefore, the column space of $q_{k+1}(x_{k+1}; \alpha_{k})$ is the same as the row space of $p(x_{k}; x_{k+1})$, where both are rank-$r_k$. Thus, $A_k(x_{k}; \alpha_{k})$ must be rank-$r_k$ by construction. 
\end{proof}

\subsection{Stable estimation for Markov models}\label{section:stable recovery Markov}

In this section, we present an informal result regarding the stability of the TT-RS algorithm when an empirical distribution $\hat p$ is provided as input instead of the true density $p^{\star}$. The precise statement of the theorem is deferred to Appendix \ref{app:pert_res}. If $\hat{p}$ is taken as the input of Algorithm \ref{alg:1}, the results $\tilde \Phi_1, \ldots, \tilde \Phi_d$ of \textsc{Sketching} have certain variances that get propagated to the final output $G_1,\ldots,G_d$ via the coefficient matrices $A_1, \ldots, A_{d-1}, B_1, \ldots, B_d$. The variances of $\tilde \Phi_1,\ldots, \tilde \Phi_d$ depend critically on the choice of sketch functions. In what follows, we show that the sketches \eqref{eq:sketchT_def} and \eqref{eq:sketchS_def} give a nearly dimension-independent error when estimating the tensor cores if $p^\star$ is a Markov model satisfying the following natural condition.
\begin{condition}
    \label{cond:transition}
    The transition kernels $p^\star(x_2 | x_1), \ldots, p^\star(x_d | x_{d - 1})$ are independent of $d$.
\end{condition}
\begin{theorem}[Informal statement of Theorem \ref{thm:markov-estimation}]
\label{thm:markov-estimation-informal}
Suppose $p^\star$ is a discrete Markov model that satisfies Condition \ref{cond:transition} and admits a TT-representation with rank $(r_1, \ldots, r_{d-1})$. Consider an empirical distribution $\hat p$ constructed based on N i.i.d.\ samples from $p^\star$. Let $\hat G_1,\ldots, \hat G_d$ and $G^\star_1,\ldots, G^\star_d$ be the results of TT-RS with $\hat p$ and $p^\star$ as input, respectively. Then, with high probability,
\begin{equation}
    \frac{\mathrm{dist}(\hat G_k, G_k^\star)}{\normi{G_k^\star}}\leq  O\left(\frac{\sqrt{\log(d)}}{\sqrt{N}}\right) \quad \forall k = 1, \ldots, d,
\end{equation}
where the hidden constant in the ``big-$O$'' notation does not depend on the dimensionality $d$, $\normi{\cdot}$ is some appropriate norm, and  $\mathrm{dist}(\cdot, \cdot)$ is a suitable measure of distance between cores.
\end{theorem}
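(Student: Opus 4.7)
The proof plan hinges on the observation that under the Markov sketches \eqref{eq:sketchT_def}--\eqref{eq:sketchS_def}, the identity \eqref{eq:tildep_markov} shows that each $\tilde\Phi_k$ equals a marginal of $p^\star$ over at most three adjacent variables. Hence estimating $\tilde\Phi_k$ from the empirical distribution $\hat p$ is statistically equivalent to estimating a fixed, low-dimensional probability table from $N$ i.i.d.\ samples, and the dimension $d$ enters the analysis only through a union bound over the $d$ cores. The proof then proceeds in three layers: concentration of the sketched marginals, perturbation through \textsc{Trimming} and \textsc{SystemForming}, and stability of the least-squares system~\eqref{eq:alg-CDEs}.

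First I would establish a $d$-free concentration bound for each $\tilde\Phi_k$. Since the entries of $\hat{\tilde\Phi}_k$ are empirical marginal probabilities on a state space of size at most $n^3$, a Bernstein- or multinomial-type inequality yields, with probability at least $1-\delta$, $\|\hat{\tilde\Phi}_k - \tilde\Phi_k^\star\|_F \lesssim \sqrt{\log(1/\delta)/N}$, where the constant depends only on $n$ and on properties of the fixed transition kernels guaranteed by Condition~\ref{cond:transition}. Taking a union bound over $k = 1, \ldots, d$ with $\delta \mapsto \delta/d$ gives the inequality simultaneously for all $k$ with an extra $\sqrt{\log d}$ factor, which is the sole source of dimension dependence in the final bound.

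Next I would propagate this perturbation. The matrices $\hat B_k$ are the top-$r_k$ left singular vectors of $\hat{\tilde\Phi}_k$, so a Wedin subspace perturbation bound controls the principal-angle distance between $\mathrm{range}(\hat B_k)$ and $\mathrm{range}(B_k^\star)$ in terms of $\|\hat{\tilde\Phi}_k - \tilde\Phi_k^\star\|_F$ divided by the spectral gap $\sigma_{r_k}(\tilde\Phi_k^\star)$; Condition~\ref{cond:transition} ensures this gap is bounded below by a $d$-independent constant. Since $A_k$ is a linear contraction of $B_k$ against $s_k$, the same order of error propagates to $\hat A_k - A_k^\star$. Finally, solving the least-squares problems in \eqref{eq:alg-CDEs} and invoking the standard perturbation bound for linear systems gives
\begin{equation*}
    \mathrm{dist}(\hat G_k, G_k^\star) \;\lesssim\; \frac{\|\hat A_{k-1} - A_{k-1}^\star\| + \|\hat B_k - B_k^\star\|}{\sigma_{\min}(A_{k-1}^\star)},
\end{equation*}
and $\sigma_{\min}(A_{k-1}^\star)$ is again $d$-independent under Condition~\ref{cond:transition}, essentially because it inherits this property from $\sigma_{r_{k-1}}(\tilde\Phi_{k-1}^\star)$ via the argument used in the proof of Theorem~\ref{thm:discrete-markov}.

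The main obstacle will be the careful handling of gauge freedom: the singular vectors $B_k$ are defined only up to orthogonal rotations within the top singular subspace, and the cores $G_k$ are correspondingly defined only up to compatible invertible transformations on their bond indices. The metric $\mathrm{dist}(\cdot,\cdot)$ must therefore quotient this ambiguity out — for instance, by minimizing a suitable norm over the gauge group — and Wedin's theorem must be applied in its subspace form so that the gauge rotation relating $\hat B_k$ to $B_k^\star$ can be absorbed consistently when forming the products $A_{k-1}^\dagger B_k$. A secondary subtlety is that the noise in $\hat B_k$ and $\hat A_{k-1}$ comes from overlapping marginals and hence is not independent across $k$, but since we only need high-probability bounds on each term of the sum, this dependence is handled transparently by the initial union bound. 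Verifying that all population-level condition numbers (spectral gaps of the sketched marginals, smallest singular values of the $A_k^\star$) remain $O(1)$ in $d$ under Condition~\ref{cond:transition} is the last piece needed to obtain the claimed rate $\sqrt{\log(d)/N}$.
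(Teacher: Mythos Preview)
Your proposal is correct and follows essentially the same three-stage architecture as the paper's proof in Appendix~\ref{app:pert_res}: concentration of the two- and three-variable empirical marginals with a union bound over $k$ producing the $\sqrt{\log d}$ factor (Lemma~\ref{lem:concentration}), Wedin's theorem to control the $B_k$ perturbation modulo an orthogonal gauge that is then absorbed into rotated reference cores (Proposition~\ref{prop:perturbation-markov}), and a least-squares stability bound (Lemmas~\ref{lem:total-perturbation} and~\ref{lem:perturbation}), with the $d$-independence of the relevant singular values and condition numbers under Condition~\ref{cond:transition} argued exactly as in Remark~\ref{rmk:constants}. The only cosmetic differences are your use of the Frobenius norm on the marginals in place of the paper's entrywise $\ell_\infty$ norm, and a Bernstein-type inequality in place of the Hoeffding bound.
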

In Theorem \ref{thm:markov-estimation-informal}, the errors in the cores show $\sqrt{\log(d)}$-dependence which grows very slowly in $d$; the term $\sqrt{\log(d)}$ is a consequence of the union bound required to derive a probabilistic bound on $d$ objects (the cores) simultaneously. We remark, however, that near dimension-independent errors in the pairs $(G_k^\star, \hat{G}_k)$ do not necessarily imply such an error in approximating $p^\star$ by $\hat{G}_1\circ \cdots \circ \hat{G}_d$, the results of TT-RS with $\hat{p}$ as input. Instead, we can derive an error that scales almost linearly in $d$, thereby avoiding the curse of dimensionality. The precise statement is deferred to Appendix \ref{app:pert_res}; here, we provide an informal statement summarizing this result. 
\begin{cor}[Informal statement of Theorem \ref{thm:p_for_markov}]
    \label{cor:total}
    In the setting of Theorem \ref{thm:markov-estimation-informal}, with high probability, 
    \begin{equation*}
        \frac{\|\hat{G}_1\circ \dots \circ \hat{G}_d - G_1^\star \circ \dots \circ G_d^\star \|_\infty}{\normi{G_1^\star}
        \dots \normi{G_d^\star}} \le O\left(\frac{d \sqrt{\log(d)}}{\sqrt{N}}\right),
    \end{equation*}
    where $\|\cdot\|_\infty$ denotes the largest absolute value of the entries of a tensor.
\end{cor}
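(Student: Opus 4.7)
The plan is to bootstrap from the per-core bound in Theorem~\ref{thm:markov-estimation-informal} via a standard telescoping identity, propagating the $O(\sqrt{\log(d)/N})$ errors through the tensor contraction and picking up a single factor of $d$ from the sum. Because the cores of a TT are only determined up to gauge transformations $G_k \mapsto M_{k-1}^{-1} G_k M_k$ on each bond, the first step is to fix a consistent set of gauges in which the implicit $\mathrm{dist}(\hat G_k, G_k^\star)$ coincides with a direct norm difference $\normi{\hat G_k - G_k^\star}$ and, simultaneously, $\normi{\hat G_k}$ is comparable to $\normi{G_k^\star}$. This is deferred to the appendix, where $\mathrm{dist}$ and $\normi{\cdot}$ are formally introduced so as to be compatible with TT contraction.

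With a common gauge in hand, I would write the telescoping identity
\begin{equation*}
\hat G_1 \circ \cdots \circ \hat G_d - G_1^\star \circ \cdots \circ G_d^\star = \sum_{k=1}^d G_1^\star \circ \cdots \circ G_{k-1}^\star \circ (\hat G_k - G_k^\star) \circ \hat G_{k+1} \circ \cdots \circ \hat G_d,
\end{equation*}
apply the triangle inequality in $\|\cdot\|_\infty$, and bound each summand by a submultiplicative estimate of the form $\|A_1 \circ \cdots \circ A_d\|_\infty \le \normi{A_1}\cdots\normi{A_d}$, which is essentially the defining property of the $\normi{\cdot}$ norm introduced for exactly this purpose. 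This yields, for each $k$,
\begin{equation*}
\bigl\| G_1^\star \circ \cdots \circ (\hat G_k - G_k^\star) \circ \cdots \circ \hat G_d \bigr\|_\infty \le \normi{G_1^\star}\cdots\normi{G_{k-1}^\star}\,\normi{\hat G_k - G_k^\star}\,\normi{\hat G_{k+1}}\cdots\normi{\hat G_d}.
\end{equation*}

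On the high-probability event furnished by Theorem~\ref{thm:markov-estimation-informal}, each $\normi{\hat G_j - G_j^\star}/\normi{G_j^\star}$ is $O(\sqrt{\log(d)/N})$, so in particular $\normi{\hat G_j} \le (1+o(1))\normi{G_j^\star}$ uniformly in $j$. Substituting and dividing through by the product $\normi{G_1^\star}\cdots\normi{G_d^\star}$, each of the $d$ summands contributes $O(\sqrt{\log(d)/N})$, and summing gives the claimed $O(d\sqrt{\log(d)}/\sqrt{N})$ bound. Since the event in Theorem~\ref{thm:markov-estimation-informal} already absorbs the union bound across all $d$ cores, no additional probabilistic loss is incurred.

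The principal obstacle, in my view, is the gauge-matching step rather than the telescoping algebra. The per-core bound $\mathrm{dist}(\hat G_k, G_k^\star)$ optimizes over invertible gauge transformations on the two bonds adjacent to core $k$, but the telescoping identity above requires compatible choices across all $d-1$ bonds simultaneously. Establishing that a single global gauge achieves all per-core bounds up to acceptable constants is the technical heart of the argument, and in practice one would prove this together with (rather than after) Theorem~\ref{thm:markov-estimation-informal}, choosing the gauge for the ideal cores $G_k^\star$ up front (for instance, the one making $A_k^\star$ into a convenient factor of the marginal), and then showing that the perturbed cores $\hat G_k$ land close to $G_k^\star$ in this fixed gauge.
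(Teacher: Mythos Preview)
Your approach is essentially the same as the paper's: Lemma~\ref{lem:contraction_bound} is exactly the telescoping plus submultiplicativity argument you describe, and Corollary~\ref{cor:eps} converts the per-core $\delta$ into the factor-of-$d$ bound on the contraction.

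On the gauge-consistency issue you flag as the principal obstacle: the paper resolves it more cleanly than you anticipate, and the key is that the relevant gauges are \emph{orthogonal}, not merely invertible. Because \textsc{Trimming} defines each $B_k$ as a matrix of left singular vectors, Wedin's theorem yields $\hat B_k = B_k R_k + E_k$ with $R_k \in O(r_k)$ and $\|E_k\|$ controlled by the perturbation of the marginal. There is exactly one $R_k$ per bond, so setting $G_k^\ast := R_{k-1}^\top \circ G_k^\star \circ R_k$ gives a globally consistent gauge automatically; no separate compatibility argument is needed. Orthogonality then gives $\normi{G_k^\ast} = \normi{G_k^\star}$ and $G_1^\ast \circ \cdots \circ G_d^\ast = G_1^\star \circ \cdots \circ G_d^\star$ exactly, and the additive errors $E_k$ propagate to $\hat G_k - G_k^\ast$ via Lemma~\ref{lem:perturbation}. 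This is the content of Proposition~\ref{prop:perturbation-markov}, and your suggestion to ``prove this together with rather than after'' the per-core theorem is precisely what is done there.
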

In Section \ref{section:numerical}, we verify from the experiments that such $d \sqrt{\log(d)}$-dependence of the error indeed suggests near-linear dependence on the dimensionality $d$.

\begin{remark}
    Extensive numerical experiments suggest that Theorem \ref{thm:markov-estimation-informal} and Corollary \ref{thm:p_for_markov} are valid for a broad class of Markov models that may not necessarily satisfy Condition \ref{cond:transition}. See Remark \ref{rmk:constants} for details.
\end{remark}

\subsection{Higher-order Markov models} 
\label{subsec:higher-order}
We conclude this section with a brief discussion on higher-order Markov models. For $m \in \bN$, we call $p$ an order-$m$ Markov model if it is a density and satisfies
\begin{equation*}
    p(x_1, \ldots, x_d) = p(x_1, \ldots, x_m) p(x_{m + 1} | x_1, \ldots, x_m) \cdots p(x_{d} | x_{d - m}, \ldots, x_{d - 1}).
\end{equation*}
What we have presented so far, i.e., the case $m = 1$, can be generalized to any $m \in \bN$ by a suitable replacement of the sketch functions $T_2, \ldots, T_{d}$ and $s_1, s_2, \ldots, s_{d - 1}$. Recall that the sketch functions for the case where $m = 1$ are chosen based on Lemma \ref{lem:markov}, which can be properly generalized to any order-$m$ Markov model. For instance, we can say that $p(x_{i : k}; x_{k+1 : k + j})$ and $p(x_{i : k}; x_{k + 1 : k + m})$ have the same column space for any $j\geq m$. Based on this generalization, the choice of the sketch functions for general $m \in \bN$ is straightforward: they are chosen such that  
\begin{align*}
    \bar{\Phi}_{k} 
    & = \mathcal{M}_{1:(k + m) \wedge d} p, \\
    \tilde{\Phi}_{k}
    & =
    \mathcal{M}_{k-1:(k + m) \wedge d}  p.
\end{align*}
    In particular, using such $\tilde \Phi_k$'s as the input to \textsc{Trimming} and subsequently \textsc{SystemForming}, we obtain an algorithm for a discrete order-$m$ Markov density.

\section{Numerical experiments}\label{section:numerical}

In this section, we illustrate the performance of our algorithm with concrete examples. More specifically, given i.i.d.\ samples of some ground truth density $p^\star$, we construct an empirical density $\hat{p}$ and apply TT-RS (or TT-RS-C) to it to obtain cores $G_1, \ldots, G_d$ such that $p^\star \approx  G_1 \circ \cdots \circ G_{d}=:q$.

\subsection{Ginzburg-Landau distribution}
We consider the following probability density defined on $[a, b]^d$:
\begin{equation*}
    p_{GL}(x_1, \ldots, x_d)
    \propto
    \exp\left(- \beta \sum_{k = 0}^{d} \left(\frac{\lambda}{2} \left(\frac{x_k - x_{k + 1}}{h}\right)^2 + \frac{1}{4 \lambda} (x_k^2 - 1)^2\right)\right),
\end{equation*}
where $x_0 = x_{d+1} = 0$. This is the Boltzmann distribution of a Ginzburg-Landau potential, which is classically used to model phase transitions in physics and also more recently as a test case in generative modeling \cite{gabrie2021adaptive}. Throughout the section, we fix $[a, b] = [-4, 4]$ and $\beta = \lambda = h = 1$.

First, we consider a discretized version of $p$. To discretize $p$, we choose $n$ uniform grid points of $[a, b]$, that is, $\mathcal{Z} = \left\{a+\frac{i}{n-1}(b-a)\right\}^{n-1}_{i=0}$, and define a discretized density $p_{D} \colon [n]^d \to \bR$ as
\begin{equation*}
    p_{D} := [p_{GL}(x_1, \ldots, x_{d})]_{(x_1,\ldots,x_d)\in \mathcal{Z}^d}.
\end{equation*}
Hence, $p_{D}$ is essentially a multi-dimensional array of size $n^d$. Notice that $p_{GL}$ is a Markov model, hence so is $p_{D}$. We obtain $N$ i.i.d.\ samples from $p_{D}$ using a Gibbs sampler and construct an empirical density based on these samples, which form the empirical measure $\hat{p}_{D}$. We apply TT-RS with sketches~\eqref{eq:sketchT_def} and \eqref{eq:sketchS_def} to $\hat{p}_{D}$ and let $q_{D}:=G_1\circ\cdots\circ G_d$ be the contraction of the cores obtained by the algorithm. We compute the following relative $\ell^2$ error:
\begin{equation*}
    \frac{\|p_D - q_D\|_2}{\|p_D\|_2},
\end{equation*}
where $\|f\|_2^2 := \sum_{x_1 = 1}^{n} \cdots \sum_{x_d = 1}^{n} f(x_1, \ldots, x_d)^2$ for any $f \colon [n]^d \to \bR$. We see in Figure~\ref{fig:plots_discrete}(A) that the error decreases with rate $O\left(\frac{1}{\sqrt N}\right)$ as sample size $N$ increases when we fix $d$. Furthermore, when we fix $N$ and let $d$ grow, we see a linear growth in the error (Figure~\ref{fig:plots_discrete}(B)). 

\begin{figure}[ht]
    \centering
    \subfloat[\centering]{\includegraphics[width=0.48\textwidth]{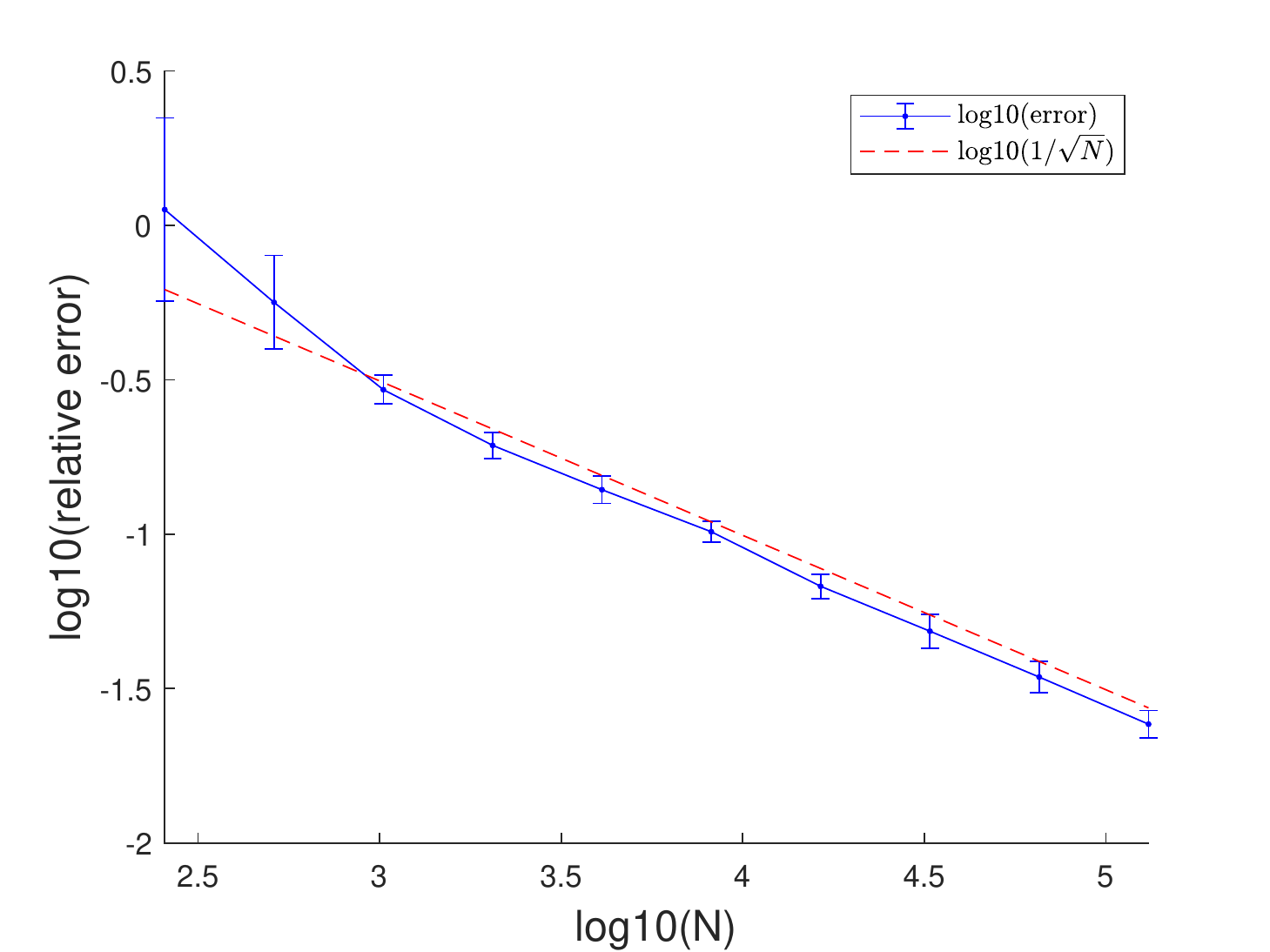}}%
    \hspace{12pt}
    \subfloat[\centering]{\includegraphics[width=0.48\textwidth]{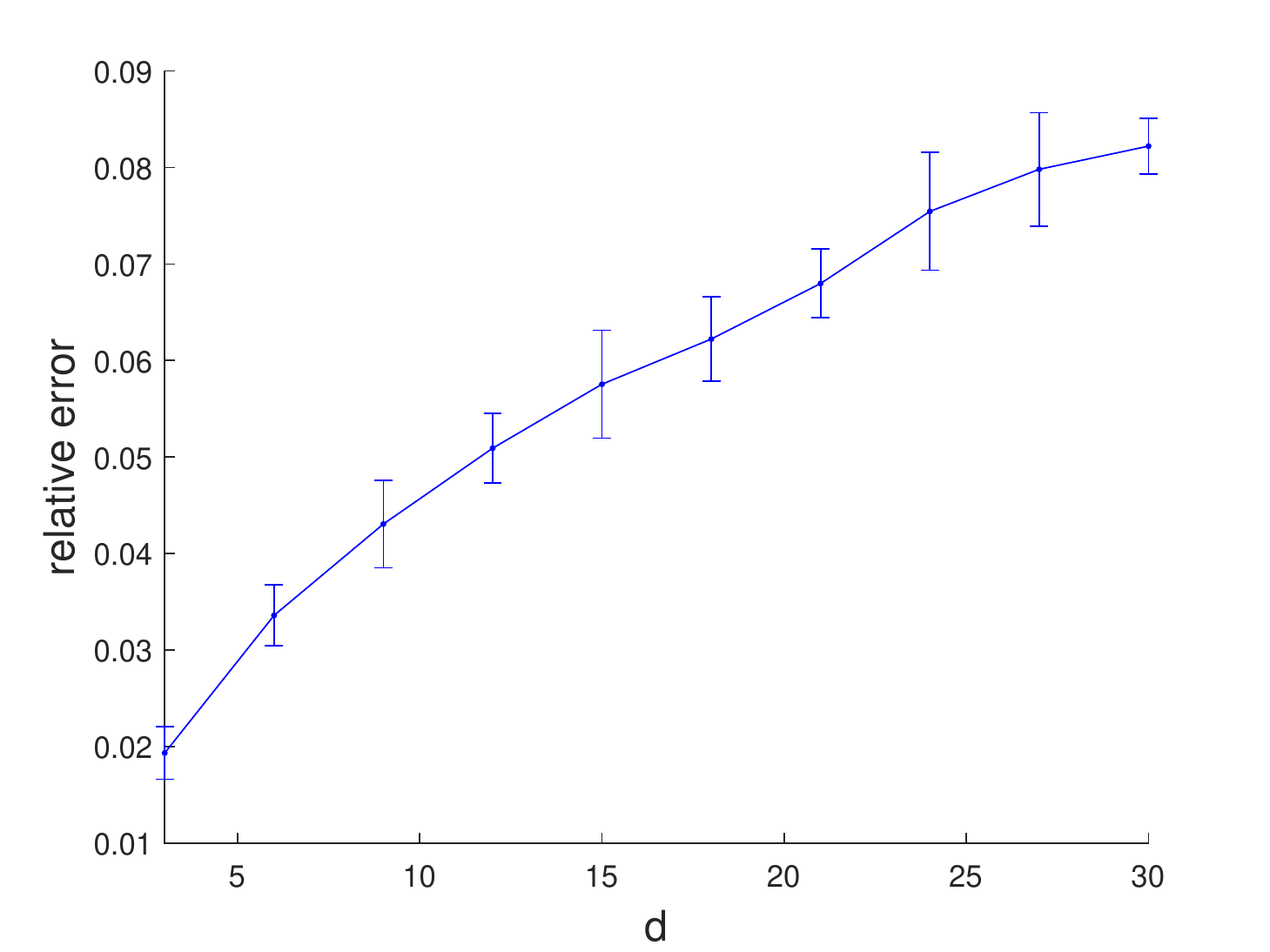}}%
    \caption{Relative $\ell_2$ errors for the discretized case. In (A), we fix $d = 8$ and change the sample size $N \in \{2^{8}, 2^{9}, \ldots, 2^{17}\}$. In (B), we fix the sample size $N = 50000$ and change $d \in \{3, 6, \ldots, 27, 30\}$. In both cases, we use the fixed number of grid points $n = 9$, and TT-RS (Algorithm \ref{alg:1}) is applied with $r_1 = \cdots = r_d = 3$. Each error bar is centered at the average of 20 realizations, with the standard deviation as its vertical length.}
    \label{fig:plots_discrete}
\end{figure}

Next, we repeat the same procedure with a continuous density $p_{GL}$. Now we obtain $N$ i.i.d.\ samples from $p_{GL}$ using the Metropolis-Hastings algorithm and construct an empirical density $\hat{p}$ based on them. Then, we apply Algorithm \ref{alg:markov-continuous} to $\hat{p}$, where we choose the basis functions $\phi_1, \ldots, \phi_M$ as Fourier basis functions on $[a, b]$. Recall that contraction of the resulting cores gives a function $q$ such that 
\begin{equation*}
    q(x_1, \ldots, x_{d}) = \sum_{j_{1} = 1}^{M} \cdots \sum_{j_{d} = 1}^{M} \left(\sum_{\alpha_1 = 1}^{r_1} \cdots \sum_{\alpha_{d - 1} = 1}^{r_{d - 1}} g_1(j_1, \alpha_1) \cdots g_{d}(\alpha_{d-1}, j_{d})\right) \phi_{j_1}(x_1) \cdots \phi_{j_{d}}(x_{d}).
\end{equation*}
Then, we compute the relative $L^2$ error: 
\begin{equation*}
    \text{err}_{t} = \frac{\|p_{GL} - q\|_2}{\|p_{GL}\|_2},
\end{equation*}
where $\|f\|_2^2 = \int_{[a, b]^d} f(x_1, \ldots, x_d)^2 \, d x_1 \cdots d x_d$ for any $f$ defined on $[a, b]^d$. Since $q$ is an element of the function space $\Pi_{M} := \{\phi_{j_1} \otimes \cdots \otimes \phi_{j_d} : j_1, \ldots, j_d \in [M] \}$, we may decompose this $L^2$ error as follows using the orthogonality:
\begin{equation*}
    \text{err}_{t}^2 = \underbrace{\left(\frac{\|p_{GL} - p_{A}\|_2}{\|p_{GL}\|_2}\right)^2}_{=: \text{err}_{a}^2} + \underbrace{\left(\frac{\|p_{A} - q\|_2}{\|p_{GL}\|_2}\right)^2}_{=: \text{err}_{e}^2},
\end{equation*}
where 
\begin{align*}
    p_{A}(x_1, \ldots, x_{d}) & = \sum_{j_{1} = 1}^{M} \cdots \sum_{j_{d} = 1}^{M} \nu(j_1, \ldots, j_d) \phi_{j_1}(x_1) \cdots \phi_{j_{d}}(x_{d}), \\
    \nu(j_1, \ldots, j_d) & = \int p_A(x_1, \ldots, x_d) \phi_{j_1}(x_1) \cdots \phi_{j_{d}}(x_{d}) \, d x_1 \cdots d x_d.
\end{align*}
In other words, $p_{A}$ is the approximation of $p$ within the space $\Pi_{M}$ spanned by the product basis, thus $\text{err}_{a}$ represents an approximation error. Accordingly, we can think of $\text{err}_{e}$ as an estimation error, where the resulting $g_1, \ldots, g_d$ can be thought of as approximate cores of $\nu$. All the integrals above are approximated using the Gauss-Legendre quadrature rule with 50 nodes. 

The resulting $L^2$ errors are shown in Table \ref{tab:results_continuous}. As $M$ increases, the approximation error $\text{err}_{a}$ decreases quickly to 0. On the other hand, larger $M$ leads to a larger estimation error $\text{err}_{e}$ as one needs to estimate a larger size of coefficient tensor $\nu$.

\begin{table}[ht]
\centering
\scalebox{0.85}{
\begin{tabular}{|c | c c c | c c c | c c c|} 
\hline
&\multicolumn{3}{c|}{$d = 5$} & \multicolumn{3}{c|}{$d = 10$} & \multicolumn{3}{c|}{$d = 15$}\\
\hline
$M$ & $\text{err}_a$ & $\text{err}_e$ & $\text{err}_t$ & $\text{err}_a$ & $\text{err}_e$ & $\text{err}_t$ & $\text{err}_a$ & $\text{err}_e$ & $\text{err}_t$ \\
\hline
7 & 0.2693 & 0.0202 (0.0023) & 0.2701 & 0.4144 & 0.0392 (0.0032) & 0.4163 & 0.5104 & 0.0582 (0.0041) & 0.5138 \\
9 & 0.1617 & 0.0334 (0.0018) & 0.1651 & 0.2511 & 0.0621 (0.0027) & 0.2587 & 0.3142 & 0.0908 (0.0041) & 0.3270 \\
11 & 0.0867 & 0.0411 (0.0016) & 0.0960 & 0.1365 & 0.0754 (0.0024) & 0.1559 & 0.1722 & 0.1100 (0.0039) & 0.2044 \\
13 & 0.0400	& 0.0433 (0.0015) & 0.0589 & 0.0655 & 0.0802 (0.0023) & 0.1036 & 0.0837 & 0.1186 (0.0039) & 0.1451 \\
15 & 0.0201 & 0.0446 (0.0015) & 0.0489 & 0.0330 & 0.0833 (0.0023) & 0.0896 & 0.0421 & 0.1246 (0.0038) & 0.1315 \\
\hline
\end{tabular}}
\caption{$L^2$ errors for Ginzburg-Landau Gibbs measure in the continuous case. Sample size $N$ is fixed to $10^6$, and Algorithm \ref{alg:markov-continuous} is applied with $r_1 = \cdots = r_d = 3$. Each $\text{err}_e$ is averaged over 20 realizations, and the number in the parentheses denotes the standard deviation.}
\label{tab:results_continuous}
\end{table}
\subsection{Ising-type model} For our next example we consider the following slight generalization of the one-dimensional Ising model. Define $p \colon \{\pm 1\}^d \to \bR$ by
\begin{equation}
    \label{eq:ising}
    p_I(x_1, \ldots, x_d) \propto \exp\left(-\beta \sum_{i, j = 1}^{d} J_{i j} x_i x_j\right),
\end{equation}
where $\beta > 0$ and the interaction $J_{i j}$ is given by 
\begin{equation*}
    J_{i j} 
    =
    \begin{cases}
        - (1 + |i - j|)^{- 1} & |i - j| \le 2 \\
        0 & \text{otherwise}.
    \end{cases}
\end{equation*}
From this, we can easily see that $p_I$ is an order-$2$ Markov model. For such a model we can apply TT-RS with the sketch functions described in Section \ref{subsec:higher-order}. 

As in the previous section, we obtain $N$ i.i.d.\ samples from $p_I$ using a Gibbs sampler and construct an empirical density based on them, $\hat{p}$. Then, we apply Algorithm TT-RS, with the sketch functions in Section \ref{section:markov sketch} and with the modifications outlined in Section \ref{subsec:higher-order}, to obtain the contraction of the resulting cores $q_1$ and $q_2$, respectively. Then, we compare the two relative $\ell^2$ errors:
\begin{equation*}
    \text{err}_{1} = \frac{\|p_I - q_1\|_2}{\|p_I\|_2} 
    \quad \text{and} \quad  
    \text{err}_{2} = \frac{\|p_I - q_2\|_2}{\|p_I\|_2}.
\end{equation*}

The errors are plotted in Figure \ref{fig:plots_ising}, in which the dashed curves denote the result $\text{err}_{1}$ of TT-RS with sketches as in Section~\ref{section:markov sketch} and the solid curves correspond to $\text{err}_{2}$ from TT-RS with the sketches as in Section \ref{subsec:higher-order}. Clearly, as expected, the error is smaller when using the sketches from Section \ref{subsec:higher-order}.
\begin{figure}[ht]
    \centering
    \subfloat[\centering]{\includegraphics[width=0.49\textwidth]{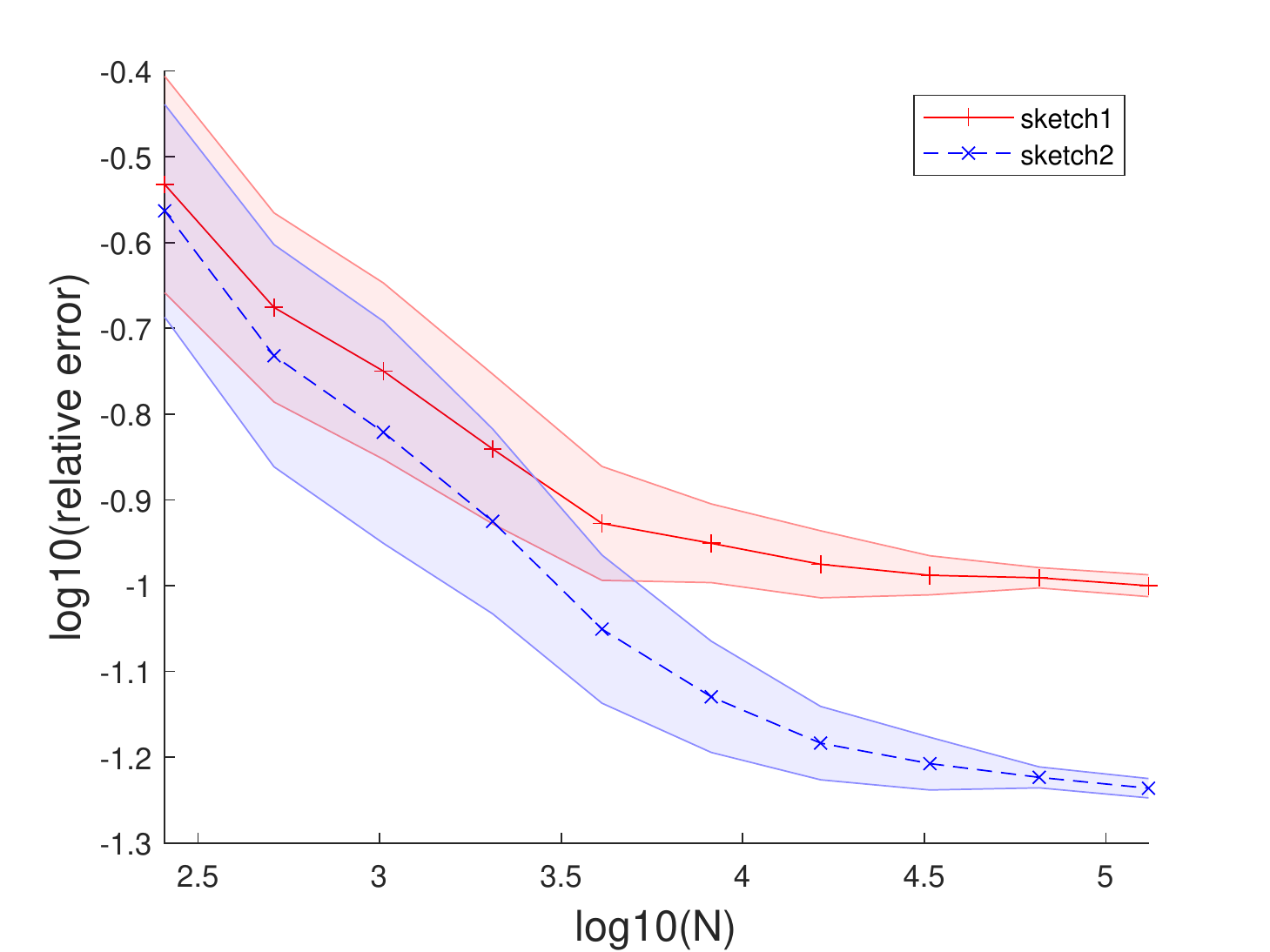}}%
    \subfloat[\centering]{\includegraphics[width=0.49\textwidth]{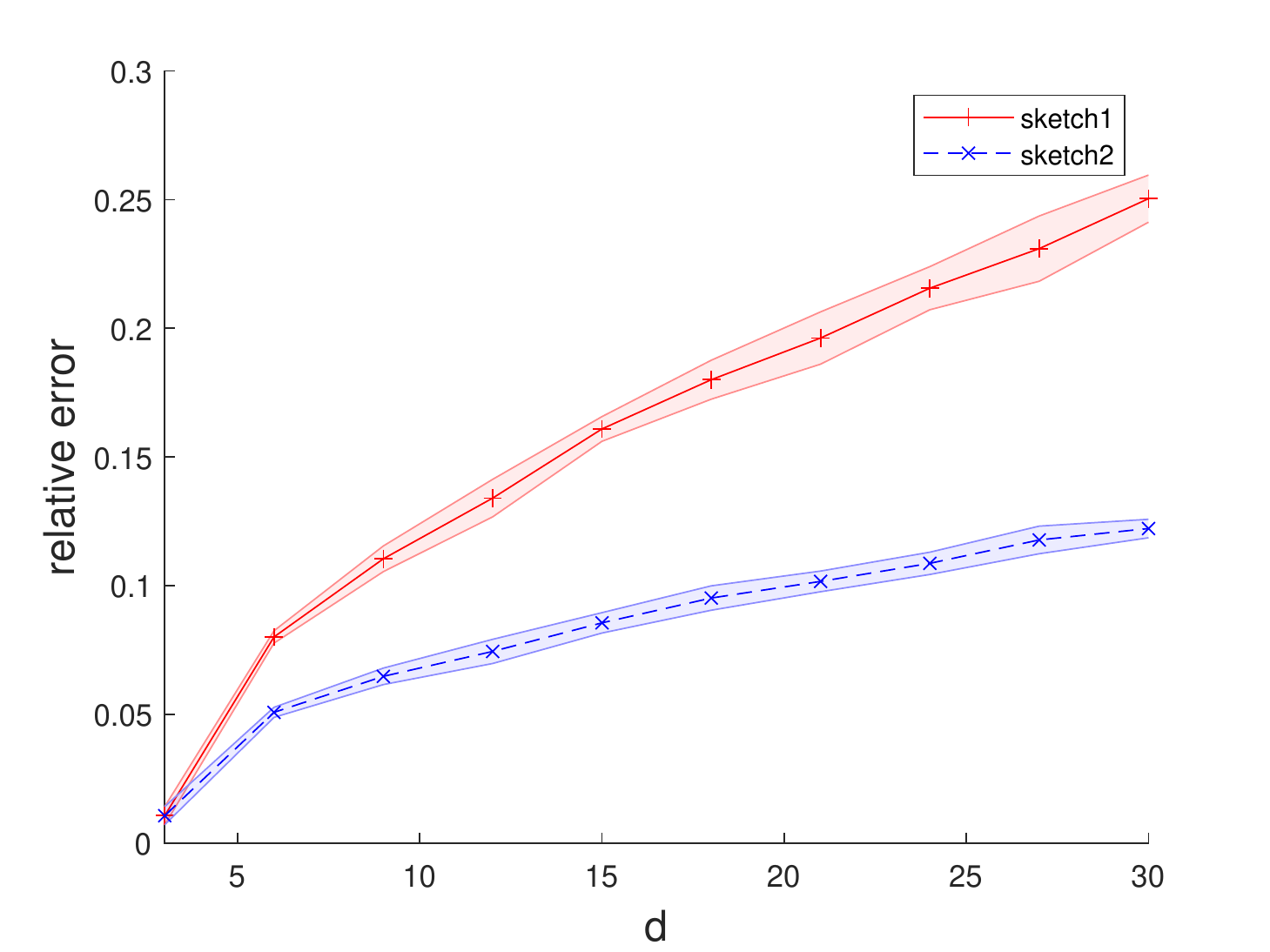}}%
    \caption{Relative $\ell_2$ errors for the order-2 Ising model. In (A), we fix $d = 8$ and change the sample size $N \in \{2^{8}, 2^9, \ldots, 2^{17}\}$. In (B), we fix the sample size $N = 50000$ and change $d \in \{3, 6, \ldots, 27, 30\}$. In both cases, we use $\beta = 0.4$, and TT-RS (Algorithm~\ref{alg:1}) is applied with $(r_1, \ldots, r_d) = (2, 3, \ldots, 3, 2)$. Errors are shown as shaded regions, where both solid and dashed curves connect the averages of errors from 20 realizations, with the standard deviation as the vertical width.}
    \label{fig:plots_ising}
\end{figure}

Lastly, we repeat the same procedure for $p_I$ where $x_k \in \{-2, -1, 0, 1, 2\}$ for  $k=1,\ldots,d$ in \eqref{eq:ising}. The results are shown in Figure \ref{fig:plots_ising5}, which demonstrates that TT-RS with appropriate sketching yields small error in the case of higher-order Markov distributions.

\begin{figure}[ht]
    \centering
    \subfloat[\centering]{\includegraphics[width=0.49\textwidth]{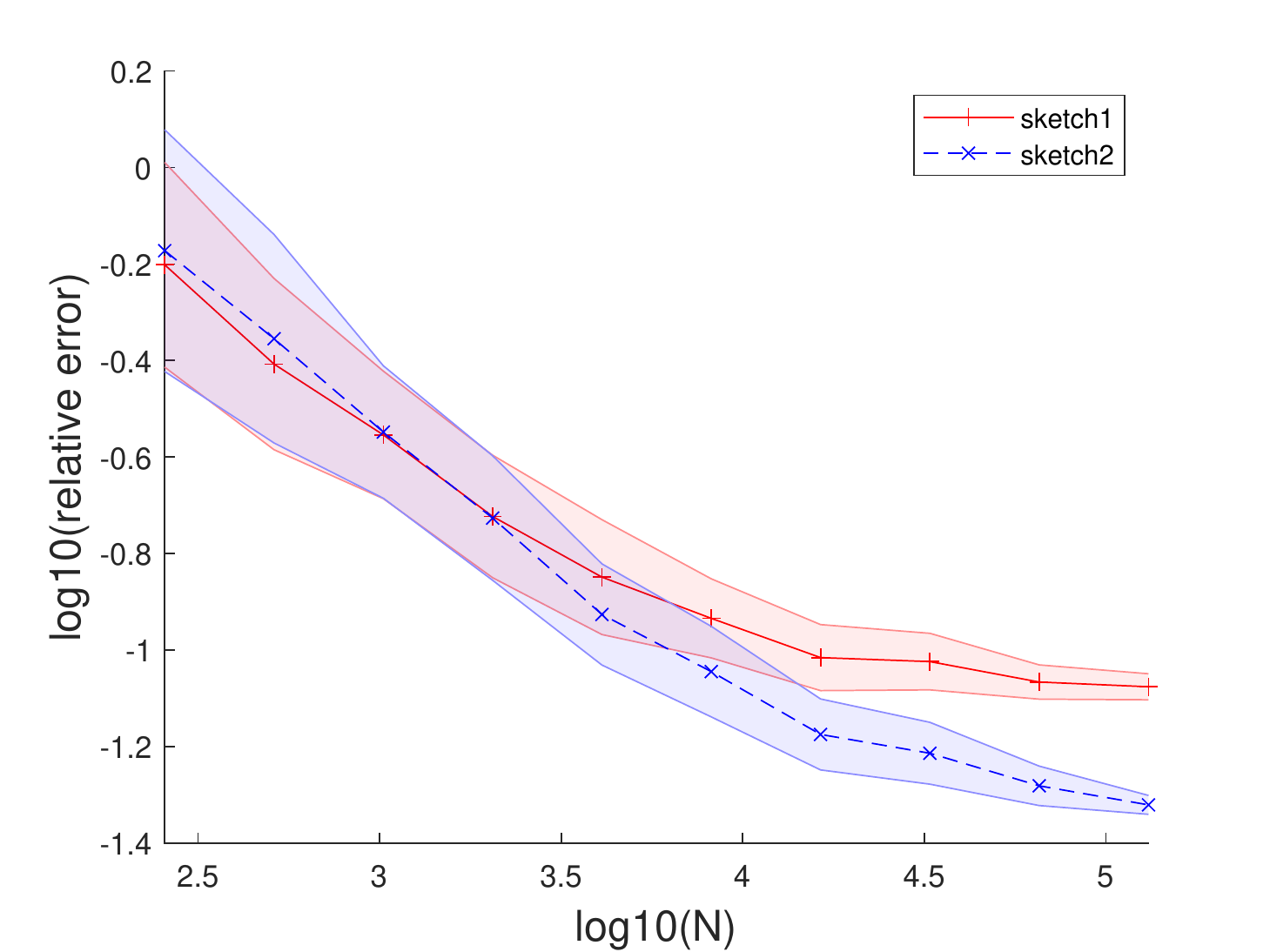}}%
    \subfloat[\centering]{\includegraphics[width=0.49\textwidth]{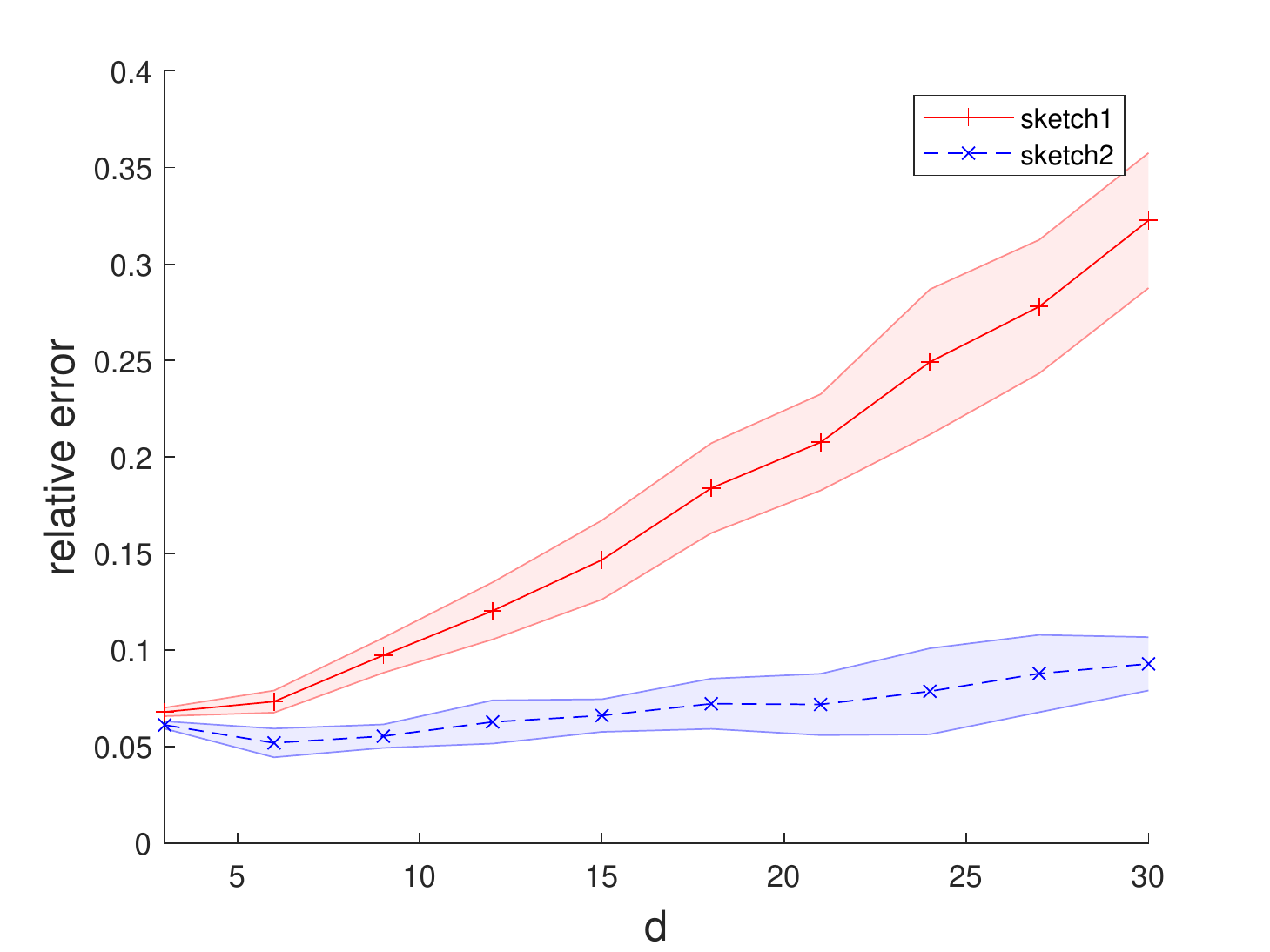}}%
    \caption{Relative $\ell_2$ errors for the order-2 Ising model on $\{-2, -1, 0, 1, 2\}^d$. In (A), we fix $d = 8$ and change the sample size $N \in \{2^8, 2^9, \ldots, 2^{17}\}$. In (B), we fix the sample size $N = 50000$ and change $d \in \{3, 6, \ldots, 27, 30\}$. In both cases, we use $\beta = 0.2$, and TT-RS (Algorithm \ref{alg:1}) is applied with $(r_1, \ldots, r_d) = (2, 3, \ldots, 3, 2)$. Each error bar is centered at the average of 20 realizations, with the standard deviation as its vertical length.}
    \label{fig:plots_ising5}
\end{figure}

\section{Conclusion}\label{section:conclusion}
We have described an algorithm TT-RS which obtains a tensor train representation of a probability density from a collection of its samples. This is done by formulating a sequence of equations, one for each core, which can be solved independently. Additionally, in order to reduce the variance in the coefficient matrices of these equations (which are constructed from the empirical distribution) sketching is required. For Markov (and higher-order Markov) models we give explicit constructions of suitable sketches and provide guarantees on the accuracy of the resulting algorithm.

Lastly, we briefly mention several possible extensions for future research. First, we can apply TT-RS to more complicated models such as hidden Markov models. The ideas that we discussed based on (higher-order) Markov models can be generalized to various models by specifying concrete sketch functions for such models. More generally, future research could focus on adapting TT-RS to tree tensor networks, aiming at generalizing TT-RS to distributions with more general graphical structure. By designing sketch functions for a broader class of models, one can bring TT-RS closer to a wide range of applications and we leave this as future work.

\section*{Acknowledgments}
The work of YH and YK was partially supported by the National Science Foundation under Award No.\ DMS-211563 and the Department of Energy under Award No.\ DE-SC0022232. The work of ML was partially supported by the National Science Foundation under Award No.\ 1903031. The Flatiron Institute is a division of the Simons Foundation.

\appendix

\section{Validity of solving CDEs}\label{app:CDE}
In this section, we give the proof of Proposition~\ref{prop:CDEs}.
\begin{proof}[Proof of Proposition~\ref{prop:CDEs}]
For $k = 2,\ldots,  d - 1$, consider the $k$-th equation in \eqref{eq:CDEs}:
\begin{equation}\label{eq:thm-CDEs}
    \sum_{\alpha_{k - 1} = 1}^{r_{k - 1}} \Phi_{k - 1}(x_{1 : k - 1}; \alpha_{k - 1}) G_{k}(\alpha_{k - 1}; x_{k}, \alpha_{k}) = \Phi_k(x_{1 : k - 1}; x_{k}, \alpha_{k}).
\end{equation}
By definition, $\Phi_{k - 1}(x_{1 : k - 1}; \alpha_{k - 1})$ is the left factor in an exact low-rank factorization of $p$, so $\Phi_{k-1}$  has full column rank and the uniqueness of solutions is guaranteed. To prove a solution also exists, we need to show that columns of $\Phi_k(x_{1 : k - 1}; x_{k}, \alpha_{k})$ are contained within the column space of $\Phi_{k - 1}(x_{1 : k - 1}; \alpha_{k - 1})$.

By the definition of $\Phi_{k - 1}$ and $\Phi_k$, we know there exists $\Psi_{k} \colon [r_{k - 1}] \times [n_k] \times \cdots \times [n_d] \to \bR$ and $\Psi_{k + 1} \colon [r_{k}] \times [n_{k + 1}] \times \cdots \times [n_d] \to \bR$ such that 
\begin{align}\label{eq:thm-factorize_p}
    p(x_{1 : k - 1}; x_{k : d}) & = \sum_{\alpha_{k-1} = 1}^{r_{k - 1}} \Phi_{k-1}(x_{1 : k - 1}; \alpha_{k-1}) \Psi_{k}(\alpha_{k-1}; x_{k : d}), \cr
    p(x_{1 : k}; x_{k + 1 : d}) & = \sum_{\alpha_{k} = 1}^{r_{k}} \Phi_{k}(x_{1 : k}; \alpha_{k}) \Psi_{k + 1}(\alpha_{k}; x_{k + 1 : d}).
\end{align} 
Note that these are rank-$r_{k - 1}$ and rank-$r_k$ decomposition of the $(k - 1)$-th and $k$-th unfolding matrices, respectively. Defining $t_{k + 1} \colon [n_{k + 1}] \times \cdots \times [n_d] \times [r_{k}] \to \bR$ so that $t_{k + 1}(x_{k + 1 : d}; \alpha_{k})$ is the pseudoinverse of $\Psi_{k + 1}(\alpha_{k}; x_{k + 1 : d})$, we obtain
\begin{equation*}
    \Phi_k(x_{1 : k}; \alpha_{k}) = \sum_{x_{k+1} = 1}^{n_{k+1}} \cdots \sum_{x_{d} = 1}^{n_d} p(x_{1 : k}; x_{k + 1 : d})  t_{k+1}(x_{k+1 : d}; \alpha_{k}).
\end{equation*}
Then, one can easily verify that the $k$-th equation~\eqref{eq:thm-CDEs} holds if we let
\begin{equation*}
    G_k(\alpha_{k-1}, x_{k}, \alpha_k) = \sum_{x_{k + 1} = 1}^{n_{k + 1}} \cdots \sum_{x_{d} = 1}^{n_d} \Psi_{k}(\alpha_{k-1}, x_{k}, x_{k + 1 : d}) t_{k + 1}(x_{k+1 : d}, \alpha_{k})
\end{equation*}
along with~\eqref{eq:thm-factorize_p}. Thus, we have not only proved the existence of solutions to the $k$-th equation, but also obtained the exact form of the solution in terms of $\Psi_{k}$ and $t_{k + 1}$. 

Similarly, we can show that the equation in~\eqref{eq:thm-CDEs} for $G_d$ is well-defined. By construction, it then follows that \eqref{eq:prop-core} holds.
\end{proof}

\section{Non-recursive TT-RS: TT-Sketch (TT-S)}\label{app:non-recursive sketch}

\subsection{Role of recursive left sketches and possibility of non-recursive sketches}\label{section:TTS}
In this subsection, we discuss the importance of forming the recursive right sketches $S_1,\ldots,S_{d-1}$ from $s_1,\ldots,s_d$, noting that for $T_2,\ldots,T_d$ there is no such need. The requirement of ``recursiveness'' in the construction of the $S_k$'s is a consequence of the \textsc{Trimming} step, which introduces an arbitrary projection matrix in the factorization of the $k$-th unfolding of $p$. To see this, consider first the case without $\textsc{Trimming}$, i.e., using sketches $T_{k+1}$ with $l_k = r_k$. Then one can use $\bar \Phi_k$ (defined in Algorithm~\ref{alg:1-2}) in the CDEs \eqref{eq:CDEs}, i.e. solve
\begin{equation}\label{eq:CDEs bar}
     \sum_{\alpha_{k - 1} = 1}^{r_{k - 1}} \bar \Phi_{k - 1}(x_{1 : k - 1}; \alpha_{k - 1}) G_{k}(\alpha_{k - 1}; x_{k}, \alpha_{k}) = \bar \Phi_k(x_{1:k-1}; x_{k}, \alpha_{k}) 
\end{equation}
if each $\bar \Phi_{k}$ has rank $r_k$. To reduce the system size further one could simply apply \emph{arbitrary} left sketches as 
\begin{equation}\label{eq:reduced CDEs bar}
\begin{split}
    & \sum_{x_1 = 1}^{n_1} \cdots \sum_{x_{k - 1} = 1}^{n_{k - 1}} S_{k-1}(\beta_{k-1}; x_{1 : k - 1})\sum_{\alpha_{k - 1} = 1}^{r_{k - 1}}  \bar \Phi_{k - 1}(x_{1 : k - 1}; \alpha_{k - 1}) G_{k}(\alpha_{k - 1}; x_{k}, \alpha_{k}) \\
    & = \sum_{x_1 = 1}^{n_1} \cdots \sum_{x_{k - 1} = 1}^{n_{k - 1}} S_{k-1}(\beta_{k-1}; x_{1 : k - 1}) \bar \Phi_k(x_{1:k-1}; x_{k},\alpha_{k}) 
\end{split}
\end{equation}
so long as the reduced CDEs remain well-posed. In this case, one could set $B_k = S_{k-1} \bar \Phi_k$ and $A_{k-1} = S_{k-1}  \bar \Phi_{k-1}$. 

Unfortunately, a complication arises when we use sketches $T_{k+1}$ with $l_k > r_k$. In this case we cannot simply solve \eqref{eq:CDEs bar} or \eqref{eq:reduced CDEs bar} as it gives TT with excessively large rank. We then need to apply a suitable further projection $q_k \in \mathbb{R}^{l_{k-1}\times r_{k-1}}, q_{k+1}\in \mathbb{R}^{l_k\times r_k}$ in \eqref{eq:CDEs bar} and \eqref{eq:reduced CDEs bar}
\begin{equation}
\bar \Phi_{k-1} \rightarrow  \bar \Phi_{k-1} q_{k},\quad \bar \Phi_k \rightarrow  \bar \Phi_k q_{k+1}
\end{equation}
treating $\bar\Phi_{k-1}$ and $\bar\Phi_k$ as matrices of size $n_1\cdots n_{k-1} \times l_{k-1}$ and $n_1\cdots n_{k} \times l_k$, respectively. This is the idea behind \textsc{Trimming}. However, rather than explicitly applying the projection $q_{k}$,  \textsc{Trimming} performs the projections implicitly, i.e., it directly gives
\begin{equation}\label{eq:Bk motivation}
     B_{k}(\beta_{k-1},x_k, \alpha_k) = \sum_{x_1,\ldots, x_{k-1}} S_{k-1}(\beta_{k-1}, x_{1 : k - 1}) \sum_{\gamma_k}  \bar \Phi_{k }(x_{1 : k - 1}, x_{k}, \gamma_{k }) q_{k+1}(\gamma_{k },\alpha_{k})
\end{equation}
via an SVD without obtaining the $q_k$'s. This presents a complication: in order to solve \eqref{eq:reduced CDEs bar}, one needs to form
\begin{equation}\label{eq:Ak motivation}
    A_{k-1}(\beta_{k - 1}, \alpha_{k - 1}) = \sum_{x_1,\ldots, x_{k-1}}S_{k-1}(\beta_{k-1}, x_{1 : k - 1})\sum_{\gamma_{k - 1}}  \bar \Phi_{k-1}(x_{1 : k - 1}, \gamma_{k-1}) q_{k}(\gamma_{k-1},\alpha_{k-1}),
\end{equation}
but all we have access to is $B_{k-1}$ which contains $q_k$ implicitly (note that $A_{k-1}$ is not $B_{k-1}$). It is unclear how to obtain $A_{k-1}$ without knowing $q_k$ explicitly.

There are two remedies for this. The first one is recursive left sketching, and the second one is to obtain projections (the $q_k$'s) directly. The second remedy is more complicated than the first, though it does not require recursive sketching. In this paper, we have focused primarily on the recursive left sketching approach, which allows us to obtain $A_k$'s directly from $B_k$'s. In the next subsection, we provide details of the second remedy in the following subsection.

\subsection{Non-recursive sketches}
Suppose now $S_1,\ldots,S_{d-1}$ in Algorithm~\ref{alg:1-2} are arbitrary sketches that are \emph{non-recursive}, meaning that they are not in the form of
\begin{equation}\label{eq:successive sketch}
    S_{k-1}(\beta_{k-1},x_{1 : k - 1}) = \sum_{\beta_{k-2}} s_{k-1}(\beta_{k-1},x_{k-1},\beta_{k-2}) S_{k-2}(\beta_{k-2},x_{1 : k - 2}).
\end{equation}
Evidently, \textsc{Trimming} gives the following expression for $B_k$ in terms of the sketched unfolding matrix (in Algorithm~\ref{alg:1-3}) and some ``gauge'' $q_{k+1}$
\begin{equation}
    B_k(\beta_{k-1},x_k; \alpha_k) = \sum_{\gamma_k}\tilde \Phi_k(\beta_{k-1},x_k; \gamma_k ) q_{k+1}(\gamma_k,  \alpha_k)
\end{equation}
where 
\begin{equation}
    q_{k+1} = V_k \Sigma_k^{-1}\quad 
\end{equation}
and
\begin{equation}
    \tilde \Phi_k \approx U_k \Sigma_k V_k^\top,\quad U_k \in \mathbb{R}^{m_{k-1} n_{k}\times r_k},\ \Sigma_k \in \mathbb{R}^{r_k\times r_k},\ V_k \in \mathbb{R}^{l_{k}\times r_k}
\end{equation}
being the best rank-$r_k$ approximation of $\tilde \Phi_k \in \mathbb{R}^{m_{k-1} n_k\times l_k}$ (defined in Algorithm~\ref{alg:1-3}) obtained via the SVD. Now, after obtaining the $q_k$'s in this manner, we can use them to construct the $A_{k}$'s in \eqref{eq:Ak motivation}. In this case, we do not need to use $B_k$'s to obtain $A_k$'s, as in the case when using recursive sketches. 

In what follows, we summarize this approach in TT-S (Algorithm~\ref{alg:TTS}) which removes the necessity of recursive sketching. The main difference between TT-S and TT-RS is that TT-S keeps track of the projection matrices $q_2,\ldots q_d$ in \eqref{eq:Ak motivation} obtained via Algorithm~\ref{alg:trimmingTTS} when performing \textsc{Trimming-TT-S} and uses them in Algorithm~\ref{alg:systemformingTTS}. In this way, one eliminates the need for obtaining the $A_{k}$'s via the $B_k$'s from recursive sketching. 
\begin{algorithm}
\caption{TT-S for a discrete function $p$.}
\label{alg:TTS}
\begin{algorithmic}[1]
\REQUIRE $p \colon [n_1] \times \cdots \times [n_d] \to \bR$ and target ranks $r_1, \ldots, r_{d - 1}$.
\REQUIRE $T_{k} \colon [n_{k}] \times \cdots \times [n_d] \times [\ell_{k - 1}] \to \bR$ with $\ell_{k - 1} \ge r_{k - 1}$ for $k = 2, \ldots, d$.
\REQUIRE $S_k \colon [m_k] \times [n_1]\times \cdots\times [n_k]  \to \bR$ for $k = 1, \ldots, d - 1$.
\STATE $\tilde \Phi_1, \ldots, \tilde \Phi_{d}, \bar \Phi_1, \ldots, \bar \Phi_{d - 1}\leftarrow \textsc{Sketching-TT-S}(p, T_2, \ldots, T_d, S_1, \ldots, S_{d - 1})$.
\STATE $B_1, \ldots, B_{d},q_2,\ldots,q_d \leftarrow \textsc{Trimming-TT-S}(\tilde \Phi_1, \ldots, \tilde \Phi_{d}, r_1,\ldots,r_{d-1})$.
\STATE $A_1, \ldots, A_{d - 1} \leftarrow \textsc{SystemForming-TT-S}(\bar \Phi_1, \ldots, \bar \Phi_{d - 1}, q_2,\ldots,q_d,S_1,  \ldots, S_{d - 1})$.
\STATE Solve the following $d$ matrix equations via least-squares for the variables $G_1 \colon [n_1] \times [r_1] \to \bR$, $G_k \colon [r_{k - 1}] \times [n_k] \times [r_{k}] \to \bR$ for $k = 2, \ldots, d - 1$, and $G_{d} \colon [r_{d - 1}] \times [n_d] \to \bR$: \vspace{2mm}
\begin{equation}
    \begin{aligned}
        G_1 & = B_1, \\
        \sum_{\alpha_{k-1} = 1}^{r_{k - 1}} A_{k-1}(\beta_{k-1}; \alpha_{k-1}) G_{k}(\alpha_{k-1}; x_{k}, \alpha_{k}) & = B_{k}(\beta_{k-1}; x_{k}, \alpha_{k}) \quad k = 2,\ldots , d - 1, \\
        \sum_{\alpha_{d-1} = 1}^{r_{d - 1}} A_{d-1}(\beta_{d-1}; \alpha_{d-1}) G_d(\alpha_{d-1}; x_d) & = B_d(\beta_{d-1}; x_{d}).
    \end{aligned}
\end{equation}
\RETURN $G_1, \ldots, G_d$.
\end{algorithmic}
\end{algorithm}

\begin{algorithm}
\caption{\textsc{Sketching-TT-S}.}
\label{alg:sketchingTTS}
\begin{algorithmic}
\REQUIRE $p$, $T_2, \ldots, T_{d}$, and $S_1, \ldots, S_{d - 1}$ as given in Algorithm \ref{alg:TTS}.
\FOR{$k = 1$ to $d - 1$}
    \STATE Right sketching: define $\bar{\Phi}_k \colon [n_1] \times \cdots \times [n_k] \times [\ell_k] \to \bR$ as
    \begin{equation*}
        \bar{\Phi}_k(x_{1 : k}, \gamma_{k}) = \sum_{x_{k+1} = 1}^{n_{k+1}} \cdots \sum_{x_{d} = 1}^{n_d} p(x_{1 : k}, x_{k + 1 : d}) T_{k+1}(x_{k+1 : d}, \gamma_{k}).
    \end{equation*}
    \IF {$k > 1$}
        \STATE Left sketching: define $\tilde{\Phi}_k \colon [m_{k - 1}] \times [n_k] \times [\ell_k] \to \bR$ as
        \begin{equation*}
            \tilde{\Phi}_{k}(\beta_{k-1}, x_{k}, \gamma_{k})
            =
            \sum_{x_1 = 1}^{n_1} \cdots \sum_{x_{k - 1} = 1}^{n_{k - 1}} S_{k-1}(\beta_{k-1}, x_{1 : k-1}) \bar{\Phi}_k(x_{1 : k - 1}, x_{k}, \gamma_{k}).
        \end{equation*}
    \ELSE
        \STATE Define
        \begin{equation*}
            \tilde \Phi_1(x_1,\gamma_1) = \bar \Phi_1(x_1,\gamma_1).
        \end{equation*}
    \ENDIF
\ENDFOR
\STATE Left sketching: define $\tilde{\Phi}_{d} \colon [m_{d - 1}] \times [n_d] \to \bR$ as
\begin{equation*}
    \tilde \Phi_{d}(\beta_{d - 1}, x_{d})
    =
    \sum_{x_1 = 1}^{n_1} \cdots \sum_{x_{d - 1} = 1}^{n_{d - 1}} S_{d-1}(\beta_{d-1}, x_{1 : d-1}) p(x_{1 : d - 1}, x_{d}).
\end{equation*}
\RETURN $\tilde{\Phi}_1, \ldots,  \tilde{\Phi}_{d}$, $\bar{\Phi}_1, \ldots,  \bar{\Phi}_{d - 1}$.
\end{algorithmic}
\end{algorithm}

\begin{algorithm}
\caption{\textsc{Trimming-TT-S}.}
\label{alg:trimmingTTS}
\begin{algorithmic}
\REQUIRE $\tilde{\Phi}_1, \ldots, \tilde{\Phi}_{d}$ from Algorithm \ref{alg:sketchingTTS}. 
\REQUIRE Target ranks $r_1, \ldots, r_{d - 1}$ as given in Algorithm \ref{alg:TTS}.
\FOR{$k = 1$ to $d - 1$}
    \IF {$k = 1$}
        \STATE Let $U_1 \Sigma_1 V^\top_1$, where $U_1\in \mathbb{R}^{n_1\times r_1}, V_1\in \mathbb{R}^{l_1\times r_1}, \Sigma_1\in \mathbb{R}^{r_1\times r_1}$, be the best rank-$r_1$ approximation to the matrix $\tilde \Phi_1(x_1; \alpha_1)$ via SVD. Define $B_1 \colon [n_1] \times [r_1] \to \bR$ where $B_1(x_1; \alpha_1) = U_1(x_1; \alpha_1)$. Furthermore, let $q_{2} = V_1 \Sigma_1^{-1}$.
    \ELSE
        \STATE Let $U_k \Sigma_k V^\top_k$, where $U_k\in \mathbb{R}^{m_{k-1}n_k\times r_k},  V_k\in \mathbb{R}^{l_k\times r_k}, \Sigma_k\in \mathbb{R}^{r_k\times r_k}$, be the best rank-$r_k$ approximation to the matrix $\tilde{\Phi}_{k}(\beta_{k-1}, x_{k}; \gamma_{k})$ via SVD. Define $B_{k} \colon [m_{k-1}] \times [n_k] \times [r_k] \to \bR$ where $B_{k}(\beta_{k - 1}, x_{k}; \alpha_{k}) = U_k(\beta_{k - 1}, x_{k}; \alpha_{k})$. Furthermore, let $q_{k+1} = V_k \Sigma_k^{-1}$.
    \ENDIF
\ENDFOR
\STATE  Let $B_d(\beta_{d-1}, x_d) = \tilde \Phi_d(\beta_{d-1}, x_d)$.
\RETURN $B_1, \ldots, B_{d}, q_2,\ldots,q_d$.
\end{algorithmic}
\end{algorithm}

\begin{algorithm}
\caption{\textsc{SystemForming-TT-S}.}
\label{alg:systemformingTTS}
\begin{algorithmic}
\REQUIRE $\bar \Phi_1, \ldots, \bar \Phi_{d - 1}$ from Algorithm \ref{alg:sketchingTTS}
\REQUIRE$q_2\ldots,q_d$ from Algorithm \ref{alg:trimmingTTS}.
\REQUIRE $S_1, \ldots, S_{d - 1}$ as given in Algorithm \ref{alg:TTS}.
\FOR{$k = 1$ to $d - 1$}
    \STATE Compute $A_k \colon [m_k] \times [r_k] \to \bR$:
    \begin{equation*}
        A_{k}(\beta_{k}, \alpha_{k}) 
        =  \sum_{x_1 = 1}^{n_1} \cdots \sum_{x_{k} = 1}^{n_{k}} 
        S_{k}(\beta_k, x_{1 : k})\sum^{l_k}_{\gamma_k=1} \bar \Phi_{k}(x_{1 : k}, \gamma_k) q_{k+1}(\gamma_k,\alpha_k).
    \end{equation*}
\ENDFOR
\RETURN $A_1,\ldots, A_{d - 1}$.
\end{algorithmic}
\end{algorithm}

\section{Continuous TT-RS}\label{app:cttrs}
In this section, we consider a general function $p \colon X_1 \times \cdots \times X_{d} \to \bR$, where $X_1, \ldots, X_{d} \subset \bR$. It turns out that everything presented in previous sections is still valid if we replace every discrete quantity with its continuous counterpart; concretely, we replace $[n_k]$, $[m_k]$, and $[\ell_k]$ with $X_k$, $\mathcal{B}_{k}$, and $\mathcal{C}_{k}$, respectively, where $\mathcal{B}_{k}$ and $\mathcal{C}_{k}$ are appropriate domains that can be chosen by model assumptions. Accordingly, we also replace all the summation over these sets with appropriate integration; for instance, replace $\sum_{x_{k} = 1}^{n_k}$ and $\sum_{\beta_k = 1}^{m_{k}}$ with $\int_{X_k} \, d x_{k}$ and $\int_{\mathcal{B}_{k}} \, d \beta_{k}$, respectively. As a result, we obtain Algorithms \ref{alg:2}, \ref{alg:2-2},  \ref{alg:2-3}, and \ref{alg:2-4} as continuous counterparts of Algorithms \ref{alg:1}, \ref{alg:1-2}, \ref{alg:1-3}, and \ref{alg:1-4}.

\begin{algorithm}
\caption{TT-RS-C for a continuous function $p$.}
\label{alg:2}
\begin{algorithmic}[1]
\REQUIRE  $ p \colon X_1 \times \cdots \times X_{d} \to \bR$ and target ranks $r_1, \ldots, r_{d - 1}$.
\REQUIRE $T_{k} \colon X_k \times \cdots \times X_{d} \times \mathcal{C}_{k - 1} \to \bR$ for $k = 2, \ldots, d$.
\REQUIRE $s_1 \colon \mathcal{B}_{1} \times X_1 \to \bR$ and $s_k \colon \mathcal{B}_{k} \times X_k \times \mathcal{B}_{k - 1} \to \bR$ for $k = 2, \ldots, d - 1$.
\STATE $\tilde{\Phi}_1, \ldots,  \tilde{\Phi}_{d} \leftarrow \textsc{Sketching-c}(p, T_2, \ldots, T_d, s_1, \ldots, s_{d - 1})$.
\STATE $B_1, \ldots, B_d \leftarrow \textsc{Trimming-c}(\tilde{\Phi}_1, \ldots,  \tilde{\Phi}_{d}, r_1, \ldots, r_{d - 1})$.
\STATE $A_1, \ldots, A_{d-1} \leftarrow \textsc{SystemForming-c}(B_1, \ldots, B_{d - 1}, s_1, \ldots, s_{d - 1})$.
\STATE Solve the following $d$ matrix equations via least-squares for the variables $G_1 \colon X_1 \times [r_1] \to \bR$, $G_k \colon [r_{k - 1}] \times X_k \times [r_{k}] \to \bR$ for $k = 2, \ldots, d - 1$, and $G_{d} \colon [r_{d - 1}] \times X_d \to \bR$. 
\begin{equation}
    \label{eq:alg2-CDEs}
    \begin{aligned}
        G_1 & = B_1, \\
        \sum_{\alpha_{k-1} = 1}^{r_{k - 1}} A_{k-1}(\beta_{k-1}; \alpha_{k-1}) G_{k}(\alpha_{k-1}; x_{k}, \alpha_{k}) & = B_{k}(\beta_{k-1}; x_{k}, \alpha_{k}) \quad k = 2, \ldots, d - 1, \\
        \sum_{\alpha_{d-1} = 1}^{r_{d - 1}} A_{d-1}(\beta_{d-1}; \alpha_{d-1}) G_d(\alpha_{d-1}; x_d) & = B_d(\beta_{d-1}; x_{d}).
    \end{aligned}
\end{equation}
\RETURN $G_1, \ldots, G_d$.
\end{algorithmic}
\end{algorithm}

First, note that the main algorithm for the continuous case, TT-RS-C (Algorithm \ref{alg:2}), has equations \eqref{eq:alg2-CDEs} which are exactly the same as \eqref{eq:alg-CDEs} of Algorithm \ref{alg:1}. Now, \eqref{eq:alg2-CDEs} are infinite-dimensional matrix equations, that is, coefficients and cores are functions. Also, the sketching algorithm for a continuous density (Algorithm \ref{alg:2-2}), which we call \textsc{Sketching-c}, is simply a modification of \textsc{Sketching} by replacing all the summations with integrals properly. We modify \textsc{Trimming} similarly to obtain its continuous counterpart \textsc{Trimming-c}. In this case, \textsc{Trimming-c} should be done by applying functional SVD \cites{functional, suli_mayers_2003} to $\tilde {\Phi}_1, \ldots, \tilde{\Phi}_{d - 1}$ to obtain $B_1, \ldots, B_{d - 1}$, respectively. We demonstrate how such a functional SVD works in the next subsection with a concrete example.

\begin{algorithm}
\caption{\textsc{Sketching-C}.}
\label{alg:2-2}
\begin{algorithmic}
\REQUIRE $p$, $T_2, \ldots, T_{d}$, and $s_1, \ldots, s_{d - 1}$ as given in Algorithm \ref{alg:2}.
\FOR{$k = 1$ to $d - 1$}
    \STATE Right sketching: define $\bar{\Phi}_k \colon X_1 \times \cdots \times X_k \times \mathcal{C}_{k} \to \bR$ as
    \begin{equation*}
        \bar{\Phi}_k(x_{1 : k}, \gamma_{k}) = \int p(x_{1 : k}, x_{k + 1 : d}) T_{k+1}(x_{k+1 : d}, \gamma_{k}) \, d x_{k + 1} \cdots d x_{d}.
    \end{equation*}
    \IF {$k > 1$}
                \STATE Left sketching: define $\tilde{\Phi}_k \colon \mathcal{B}_{k - 1} \times X_k \times \mathcal{C}_{k} \to \bR$ as
        \begin{equation*}
            \tilde{\Phi}_{k}(\beta_{k-1}, x_{k}, \gamma_{k})
            =
            \int S_{k-1}(\beta_{k-1}, x_{1 : k - 1}) \bar{\Phi}_k(x_{1 : k - 1}, x_{k}, \gamma_{k}) \, d x_{1} \cdots d x_{k - 1}.
        \end{equation*}
        \STATE Compute $S_k \colon \mathcal{B}_{k} \times X_1 \times \cdots \times X_k \to \bR$ for the next iteration: 
        \begin{equation*}
            S_k(\beta_k, x_{1 : k}) 
            = 
            \int s_k(\beta_{k}, x_{k}, \beta_{k - 1}) S_{k - 1}(\beta_{k - 1}, x_{1 : k - 1}) \, d \beta_{k - 1}.
        \end{equation*}
    \ELSE
        \STATE Define
        \begin{equation*}
            \tilde \Phi_1(x_1,\gamma_1) = \bar \Phi_1(x_1,\gamma_1).
        \end{equation*}
        \STATE Define sketch function
        \begin{equation*}
            S_1(\beta_1, x_1) = s_1(\beta_1, x_1).
        \end{equation*}
    \ENDIF
\ENDFOR
\STATE Left sketching: define $\tilde \Phi_d \colon \mathcal{B}_{d - 1} \times X_d \to \bR$ as
\begin{equation*}
    \tilde \Phi_d(\beta_{d - 1}, x_{d})
    =
    \int S_{d-1}(\beta_{d-1}, x_{1 : d-1}) p(x_{1 : d - 1}, x_{d}) \, d x_{1} \cdots d x_{d - 1}.
\end{equation*}
\RETURN $\tilde{\Phi}_1, \ldots,  \tilde{\Phi}_{d}$.
\end{algorithmic}
\end{algorithm}


\begin{algorithm}
\caption{\textsc{Trimming-C}.}
\label{alg:2-3}
\begin{algorithmic}
\REQUIRE $\tilde{\Phi}_1, \ldots, \tilde{\Phi}_{d}$ from Algorithm \ref{alg:2-2}. 
\REQUIRE Target ranks $r_1, \ldots, r_{d - 1}$ as given in Algorithm \ref{alg:2}.
\FOR{$k = 1$ to $d - 1$}
    \IF {$k = 1$}
        \STATE Compute the first $r_1$ left singular vectors of $\tilde {\Phi}_1(x_1; \gamma_1)$ and define $B_1 \colon X_1 \times [r_1] \to \bR$ so that these singular vectors are the columns of $B_1(x_1; \alpha_1)$.
    \ELSE
        \STATE Compute the first $r_k$ left singular vectors of $\tilde{\Phi}_{k}(\beta_{k-1}, x_{k}; \gamma_{k})$ and define $B_{k} \colon \mathcal{B}_{k - 1} \times X_{k} \times [r_k] \to \bR$ so that these singular vectors are the columns of $B_{k}(\beta_{k - 1}, x_{k}; \alpha_{k})$.
    \ENDIF 
\ENDFOR
\STATE Let $B_d(\beta_{d-1}, x_d) = \tilde \Phi_d(\beta_{d-1}, x_d)$.
\RETURN $B_1, \ldots, B_{d}$.
\end{algorithmic}
\end{algorithm}


\begin{algorithm}
\caption{\textsc{SystemForming-C}.}
\label{alg:2-4}
\begin{algorithmic}
\REQUIRE $B_1, \ldots, B_{d - 1}$ from Algorithm \ref{alg:2-3}. 
\REQUIRE $s_1, \ldots, s_{d - 1}$ as given in Algorithm \ref{alg:2}.
\FOR{$k = 1$ to $d - 1$}
    \IF {$k = 1$}
        \STATE Compute $A_1 \colon \mathcal{B}_{1} \times [r_1] \to \bR$:
        \begin{equation*}
            A_1(\beta_1, \alpha_{1}) 
            = \int s_1(\beta_1, x_1) B_1(x_1, \alpha_{1}) \, d x_{1}.
        \end{equation*}
    \ELSE
         \STATE Compute $A_k \colon \mathcal{B}_{k} \times [r_k] \to \bR$:
        \begin{equation*}
            A_{k}(\beta_{k}, \alpha_{k}) 
            = 
            \int s_{k}(\beta_k, x_{k}, \beta_{k-1}) B_{k}(\beta_{k-1}, x_{k}, \alpha_{k}) \, d x_{k} d \beta_{k - 1}.
        \end{equation*}
    \ENDIF 
\ENDFOR
\RETURN $A_1,\ldots, A_{d - 1}$.
\end{algorithmic}
\end{algorithm}


\subsection{Applying TT-RS-C to the Markov case}\label{section:continuous Markov}
In this subsection, we assume $p$ is a continuous Markov model, that is, $p$ is a continuous density and satisfies \eqref{eq:markov-condition}. For simplicity, we assume $X_1 = \cdots = X_d = [a, b]$ and $(\phi_{n})_{n \in \bN}$ be a countable orthonormal basis of $L^2([a, b])$ such that $\phi_1$ is a constant function, say $\phi_{1}(x) \equiv c$. Due to orthogonality, 
\begin{equation*}
    \int_{a}^{b} \phi_{n}(x) \, d x = 0
\end{equation*}
for all $n \ge 2$. Suppose each marginal density of $p$ is well approximated using the first $M$ basis functions $\phi_1, \ldots, \phi_M$. Based on Lemma \ref{lem:markov}, we now show that we can choose concrete sketch functions $T_2, \ldots, T_{d}$ and $s_1, \ldots, s_{d - 1}$ so that Algorithm \ref{alg:2} exactly recovers the cores of $p$, when provided with $\hat p = p$.

First, let $\mathcal{B}_{k} = \mathcal{C}_{k} = [M]$ for $k = 1, \ldots, d - 1$, where $r_1, \ldots, r_{d - 1} \le M$. Then, we define $T_{k+1} \colon X_{k+1} \times \cdots \times X_{d} \times \mathcal{C}_{k} \to \bR$ as
\begin{equation*}
    T_{k+1}(x_{k+1 : d}, \gamma_{k}) = \phi_{\gamma_{k}}(x_{k+1})
\end{equation*}
which gives
\begin{equation*}
    \bar{\Phi}_{k}(x_{1 : k}, \gamma_{k}) = \int p(x_{1 : k}, x_{k+1}) \phi_{\gamma_{k}}(x_{k+1}) \, d x_{k+1}. 
\end{equation*}
$T_{k + 1}$ marginalizes out $x_{k + 2}, \ldots, x_{d}$ as in the discrete case and it replaces the variable $x_{k + 1}$ with the index $\gamma_{k} \in [M]$ based on the fact that the marginal density can be approximated well by the fist $M$ basis functions. 

Similarly, define $S_{1} \colon \mathcal{B}_{1} \times X_{1} \to \bR$ such that $\mathcal{B}_{1} = X_{1}$
\begin{equation*}
    s_{1}(\beta_{1}, x_{1}) = \phi_{\beta_{1}}(x_{1}), 
    \quad
    s_{k}(\beta_{k}, x_{k}, \beta_{k-1}) = \phi_{\beta_{k}}(x_{k}) \delta(\beta_{k-1} - 1),
\end{equation*}
where $\delta$ is the Dirac delta function. Then,
\begin{equation*}
    S_k(\beta_k, x_{1 : k}) = \phi_{\beta_{k}}(x_{k}) \phi_1(x_{k-1}) \cdots \phi_1(x_1) = c^{k - 1} \phi_{\beta_{k}}(x_{k}),
\end{equation*}
thus
\begin{equation*}
    \begin{split}
        \tilde{\Phi}_{k}(\beta_{k - 1}, x_{k}, \gamma_{k})
        & = \int S_{k-1}(\beta_{k-1}, x_{1 : k-1}) \bar{\Phi}_k(x_{1 : k - 1}, x_{k}, \gamma_{k}) \, d x_1 \cdots d x_{k-1} \\
        & = \int c^{k - 2} \phi_{\beta_{k-1}}(x_{k-1}) p(x_{1 : k}, x_{k+1}) \phi_{\gamma_{k}}(x_{k+1}) \, d x_{k+1} d x_1 \cdots d x_{k-1}\\
        & = c^{k-2} \int \phi_{\beta_{k-1}}(x_{k-1}) p(x_{k-1}, x_k, x_{k+1}) \phi_{\gamma_{k}}(x_{k+1}) \, d x_{k+1} d x_{k - 1}.
    \end{split}
\end{equation*}
and
\begin{equation*}
    \begin{split}
        \tilde{\Phi}_{d}(\beta_{d - 1}, x_{d})
        & =
        \int S_{d-1}(\beta_{d-1}, x_{1 : d -1}) p(x_{1 : d - 1}, x_{d}) \, d x_1 \cdots d x_{d-1} \\
        & = c^{d-2} \int \phi_{\beta_{d - 1}}(x_{d-1}) p(x_{d-1}, x_{d}) \, d x_{d-1}.
    \end{split}
\end{equation*}
In other words, $S_{k - 1}$ marginalizes out variables $x_1, \ldots, x_{k - 2}$ as in the discrete case; furthermore, it replaces the variable $x_{k - 1}$ with the index $\beta_{k - 1}$ by integration against basis functions. 

Using the results $\tilde{\Phi}_1, \ldots, \tilde{\Phi}_{d}$ from \textsc{Sketching-C}, we now explain how to implement \textsc{Trimming-C} via functional SVD. The idea is to use basis expansion with respect to each node $x_k \in X_k$ and then apply SVD. For instance, consider $\tilde{\Phi}_1(x_1, \gamma_1)$. For large enough $M \in \bN$, we have
\begin{equation*}
    \tilde{\Phi}_1(x_1, \gamma_{1}) \approx \sum_{\beta_1 = 1}^{M} \nu_1(\beta_1, \gamma_{1}) \phi_{\beta_1}(x_1),
\end{equation*}
where $\nu_1 \colon [M] \times [M] \to \bR$ is given as
\begin{equation*}
    \nu_1(\beta_1, \gamma_{1}) = \int \tilde{\Phi}_1(x_1, \gamma_{1}) \phi_{\beta_1}(x_1) \, d x_1.
\end{equation*} 
Now, we can apply SVD to a matrix $\nu_1(\beta_1; \gamma_{1})$; compute the first $r_k$ left singular vectors of $\nu_1(\beta_1; \gamma_{1})$ and define $\tilde B_{1} \colon [M] \times [r_1] \to \bR$ so that these singular vectors are the columns of $\tilde B_1(\beta_1; \alpha_1)$. Then, we define $B_1 \colon X_1 \times [r_1] \to \bR$ as
\begin{equation*}
    B_1(x_1, \alpha_{1}) := \sum_{\beta_1 = 1}^{M} \tilde B_1(\beta_1, \alpha_{1}) \phi_{\beta_1}(x_1).
\end{equation*}
Then, 
\begin{equation*}
    A_1(\beta_1,\alpha_1) = \int  B_1(x_1, \alpha_{1}) \phi_{\beta_1}(x_1) dx_1 = \tilde B_1(\beta_1,\alpha_1).
\end{equation*}
Similarly, for $k = 2, \ldots, d - 1$, we have
\begin{equation*}
    \tilde{\Phi}_{k}(\beta_{k-1}, x_{k}, \gamma_{k})
    \approx
    \sum_{j_k = 1}^{M} \nu_{k}(\beta_{k-1}, j_{k}, \gamma_{k}) \phi_{j_k}(x_{k}),
\end{equation*}
where $\nu_{k} \colon [M] \times [M] \times [M] \to \bR$ is given as
\begin{equation*}
    \nu_{k}(\beta_{k-1}, j_{k}, \gamma_{k}) = \int \tilde{\Phi}_{k}(\beta_{k-1}, x_{k}, \gamma_{k}) \phi_{j_k}(x_{k}) \, d x_k.
\end{equation*}
We compute the first $r_k$ left singular vectors of $\nu_{k}(\beta_{k-1}, j_{k}; \gamma_{k})$ and define $\tilde{B}_{k} \colon [M] \times [M] \times [r_k] \to \bR$ so that these singular vectors are the columns of $\tilde{B}_{k}(\beta_{k - 1}, j_{k}; \alpha_{k})$. Then, we define $B_{k} \colon [M] \times X_{k} \times [r_k] \to \bR$ as
\begin{equation*}
    B_{k}(\beta_{k-1}, x_{k}, \alpha_{k}) 
    =
    \sum_{j_k = 1}^{M} \tilde{B}_{k}(\beta_{k-1}, j_{k}, \alpha_{k}) \phi_{j_k}(x_{k}),
\end{equation*}
which yields $A_{k} \colon [M] \times [r_k] \to \bR$ as
\begin{equation*}
\begin{split}
    A_{k}(\beta_{k}, \alpha_{k})
    &:= \int \sum_{\beta_{k - 1} = 1}^{M} s_{k}(\beta_k, x_{k}, \beta_{k-1}) B_{k}(\beta_{k-1}, x_{k}, \alpha_{k}) \, d x_{k} \\
    & = \int \phi_{\beta_{k}}(x_k) B_k(1, x_k, \alpha_k) \, d x_k \\
    & = \sum_{j_k = 1}^{M} \tilde{B}_{k}(1, j_{k}, \alpha_{k}) \int \phi_{\beta_{k}}(x_k) \phi_{j_k}(x_{k}) \, d x_k \\
    & = \tilde{B}_{k}(1, \beta_{k}, \alpha_k).
\end{split}
\end{equation*}
Lastly, we apply basis expansion to $B_{d}$ as well. Define
\begin{equation*}
    \tilde{B}_{d}(\beta_{d-1}, j_{d}) = \int B_{d}(\beta_{d - 1}, x_{d}) \phi_{j_{d}}(x_{d}) \, d x_{d}
\end{equation*}
so that 
\begin{equation*}
    B_{d}(\beta_{d-1}, x_{d})
    \approx
    \sum_{j_{d} = 1}^{M} \tilde{B}_{d}(\beta_{d-1}, j_{d}) \phi_{j_{d}}(x_{d}).
\end{equation*}
Now, one can easily verify that solving \eqref{eq:alg2-CDEs} for $G_1, \ldots, G_d$ amounts to solving 
\begin{equation}
\label{eq:coefficient-CDEs}
    \begin{aligned}
        g_1 & = \tilde{B}_1, \\
        \sum_{\alpha_{k-1} = 1}^{r_{k - 1}} A_{k-1}(\beta_{k-1}, \alpha_{k-1}) g_{k}(\alpha_{k-1}, j_{k}, \alpha_{k}) & = \tilde{B}_{k}(\beta_{k-1}, j_{k}, \alpha_{k}) \quad k = 2, \ldots, d - 1, \\
        \sum_{\alpha_{d-1} = 1}^{r_{d - 1}} A_{d-1}(\beta_{d-1}, \alpha_{d-1}) g_d(\alpha_{d-1}, j_d) & = \tilde{B}_d(\beta_{d-1}, j_{d})
    \end{aligned}
\end{equation}
for the variables $g_1 \colon [M] \times [r_1] \to \bR$, $g_k \colon [r_{k - 1}] \times [M] \times [r_{k}] \to \bR$ for $k = 2, \ldots, d - 1$, and $g_{d} \colon [r_{d - 1}] \times [M] \to \bR$ and letting 
\begin{align*}
    G_1(x_1, \alpha_1) & = \sum_{j_1 = 1}^{M} g_1(j_1, \alpha_1) \phi_{j_1}(x_1), \\
    G_k(\alpha_{k - 1}, x_{k}, \alpha_k) & = \sum_{j_k = 1}^{M} g_k(\alpha_{k - 1}, j_{k}, \alpha_{k}) \phi_{j_k}(x_k) \quad k = 2, \ldots, d - 1, \\
    G_d(\alpha_{d - 1}, x_{d}) & = \sum_{j_{d} = 1}^{M} g_{d}(\alpha_{d - 1}, j_{d}) \phi_{j_d}(x_{d}).
\end{align*}
In this case, the resulting TT-format is 
\begin{align*}
    & \sum_{\alpha_1 = 1}^{r_1} \cdots \sum_{\alpha_{d - 1} = 1}^{r_{d - 1}} G_1(x_1, \alpha_1) \cdots G_{d}(\alpha_{d-1}, x_{d}) \\
    & =
    \sum_{j_{1} = 1}^{M} \cdots \sum_{j_{d} = 1}^{M} \left(\sum_{\alpha_1 = 1}^{r_1} \cdots \sum_{\alpha_{d - 1} = 1}^{r_{d - 1}} g_1(j_1, \alpha_1) \cdots g_{d}(\alpha_{d-1}, j_{d})\right) \phi_{j_1}(x_1) \cdots \phi_{j_{d}}(x_{d}).
\end{align*}
We summarize the case of specializing Algorithm~\ref{alg:2} to the case of Markov density in Algorithm~\ref{alg:markov-continuous}. We note that one should be able to prove a result similar to Theorem \ref{thm:discrete-markov} under mild assumptions.

\begin{algorithm}
\caption{Main algorithm for a continuous Markov density.}
\label{alg:markov-continuous}
\begin{algorithmic}[1]
\REQUIRE $ p \colon [a, b]^d \to \bR$ and target ranks $r_1, \ldots, r_{d - 1}$.
\REQUIRE Orthonormal functions $\phi_1, \ldots, \phi_M$ in $L^2([a, b])$ with $\phi_1 \equiv c$ and $M \ge r_1, \ldots, r_{d - 1}$.
\FOR{$k = 1$ to $d - 1$}
    \IF {$k = 1$}
        \STATE Define $\nu_1 \colon [M] \times [M] \to \bR$ as
        \begin{equation*}
            \nu_1(\beta_1, \gamma_{1}) = \int \int p(x_1, x_{2}) \phi_{\beta_1}(x_1) \phi_{\gamma_{1}}(x_{2}) \, d x_1 d x_{2}.
        \end{equation*} 
        \STATE Compute the first $r_1$ left singular vectors of $\nu_1(\beta_1; \gamma_{1})$ and define $\tilde B_1 \colon [M] \times [r_1] \to \mathbb{R}$ so that these singular vectors are the columns of $\tilde B_1(\beta_1; \alpha_1)$.
        \STATE Define $A_{1} \colon [M] \times [r_1] \to \bR$ so that
        \begin{equation*}
            A_1 = \tilde B_1.
        \end{equation*}
    \ELSIF{$k < d$}
        \STATE Define $\nu_{k} \colon [M] \times [M] \times [M] \to \bR$ as
        \begin{equation*}
            \nu_{k}(\beta_{k - 1}, j_{k}, \gamma_{k})
            =
            c^{k-2} \int  p(x_{k-1}, x_k, x_{k+1}) \phi_{\beta_{k-1}}(x_{k-1}) \phi_{j_k}(x_{k}) \phi_{\gamma_{k}}(x_{k+1}) \, d x_{k - 1} d x_k d x_{k+1}.  
        \end{equation*}
        \STATE Compute the first $r_k$ left singular vectors of $\nu_{k}(\beta_{k - 1}, j_{k}; \gamma_{k})$ and define $\tilde{B}_{k} \colon [M] \times [M] \times [r_k] \to \bR$ so that these singular vectors are the columns of $\tilde{B}_{k}(\beta_{k - 1}, j_{k}; \alpha_{k})$.
        \STATE Define $A_k \colon [M] \times [r_k] \to \bR$ so that
        \begin{equation*}
            A_{k}(\beta_{k}, \alpha_{k})
            = \tilde{B}_{k}(1, \beta_{k}, \alpha_k).
        \end{equation*}
    \ELSE
        \STATE Define $\tilde{B}_{d} \colon [M] \times [M] \to \bR$ as
        \begin{equation*}
            \tilde{B}_{d}(\beta_{d-1}, j_{d}) = c^{d-2} \int \phi_{\beta_{d - 1}}(x_{d-1}) p(x_{d-1}, x_{d}) \phi_{j_{d}}(x_{d}) \, d x_{d-1} d x_{d}.
        \end{equation*}
    \ENDIF 
\ENDFOR
\STATE Solve the following $d$ matrix equations via least-squares for the variables $g_1 \colon [M] \times [r_1] \to \bR$, $g_k \colon [r_{k - 1}] \times [M] \times [r_{k}] \to \bR$ for $k = 2, \ldots, d - 1$, and $g_{d} \colon [r_{d - 1}] \times [M] \to \bR$. 
\begin{equation}
\label{eq:continuous-markov-CDEs}
    \begin{aligned}
        g_1 & = \tilde B_1, \\
        \sum_{\alpha_{k-1} = 1}^{r_{k - 1}} A_{k-1}(\beta_{k-1}; \alpha_{k-1}) g_{k}(\alpha_{k-1}; j_{k}, \alpha_{k}) & = \tilde{B}_{k}(\beta_{k-1}; j_{k}, \alpha_{k}) \quad k = 2, \ldots, d - 1, \\
        \sum_{\alpha_{d-1} = 1}^{r_{d - 1}} A_{d-1}(\beta_{d-1}; \alpha_{d-1}) g_d(\alpha_{d-1}; j_d) & = \tilde{B}_d(\beta_{d-1}; j_{d}).
    \end{aligned}
\end{equation}
\RETURN $G_1, \ldots, G_d$ by letting
\begin{align*}
    G_1(x_1, \alpha_1) & = \sum_{j_1 = 1}^{M} g_1(j_1, \alpha_1) \phi_{j_1}(x_1), \\
    G_k(\alpha_{k - 1}, x_{k}, \alpha_k) & = \sum_{j_k = 1}^{M} g_k(\alpha_{k - 1}, j_{k}, \alpha_{k}) \phi_{j_k}(x_k) \quad k = 2, \ldots, d - 1, \\
    G_d(\alpha_{d - 1}, x_{d}) & = \sum_{j_{d} = 1}^{M} g_{d}(\alpha_{d - 1}, j_{d}) \phi_{j_d}(x_{d}).
\end{align*}
\end{algorithmic}
\end{algorithm}
\clearpage

\section{Perturbation results}\label{app:pert_res}
This section provides perturbation results of Algorithm \ref{alg:1}. First, we prove that small perturbation on the coefficients and the right-hand sides of \eqref{eq:alg-CDEs} of Algorithm \ref{alg:1} leads to small perturbations of the cores. Using this result we show that Algorithm \ref{alg:1} with sketches \eqref{eq:sketchT_def} and \eqref{eq:sketchS_def} is robust against small perturbations for a discrete Markov density $p$. From this, we prove that Algorithm \ref{alg:1} with sketches \eqref{eq:sketchT_def} and \eqref{eq:sketchS_def} applied to the empirical density $\hat{p}$, which is constructed based on $N$ i.i.d.\ samples from a discrete density $p^\star$, recovers $p^\star$ with high probability given $N$ is large enough; a concrete sample complexity is then derived.

\subsection{Preliminaries}
In what follows, for a given vector $x$ we let $\|x\|$ and $\|x\|_\infty$ denote its Euclidean norm and its supremum norm, respectively. For a matrix $A$, we denote its spectral norm, Frobenius norm, and the $r$-th singular value by $\|A\|$, $\|A\|_F$, and $\sigma_r(A)$, respectively. With some abuse of notation, we also let $\|A\|_\infty$ denote the largest absolute value of the entries of $A$. Lastly, the orthogonal group in dimension $r$ is denoted by $O(r)$.

We also introduce the following norms for 3-tensors.
\begin{defn}
    For any 3-tensor $G \in \mathbb{R}^{n_1 \times n_2 \times n_3}$, or equivalently, $G \colon [n_1] \times [n_2] \times [n_3] \to \mathbb{R}$, we define the norm
    $$\normi{G} := \max_{i_2 \in [n_2]} \|G(\cdot, i_2, \cdot)\|.$$
    Here, $G(\cdot, i_2, \cdot) \in \mathbb{R}^{n_1 \times n_2}$ denotes a matrix, and $\| G(\cdot,i_2,\cdot)\|$ denotes its spectral norm. Also, we define $\|G\|_\infty$ by
    $$\|G\|_\infty = \max_{(i_1,i_2,i_3) \in [n_1] \times [n_2] \times [n_3]} |G(i_1,i_2,i_3)|.$$
\end{defn}

\begin{remark}\label{rmk:3-tensor-norm}
Such a norm $\normi{\cdot}$ is useful for bounding the norm of a contraction of cores. Throughout the section, we will analyze cores obtained by our algorithm: $G_1 \colon [n_1] \times [r_1] \to \bR$, $G_k \colon [r_{k - 1}] \times [n_k] \times [r_{k}] \to \bR$ for $k = 2, \ldots, d - 1$, and $G_{d} \colon [r_{d - 1}] \times [n_d] \to \bR$. For ease of exposition, for the specific matrices $G_1$ and $G_d$ produced by the algorithm (and any perturbations of them), set $\normi{G_1} = \max_{x_1 \in [n_1]} \|G(x_1, \cdot)\|$ and $\normi{G_d} = \max_{x_d \in [n_d]} \|G(\cdot, x_d)\|$. Then, one can easily verify that 
\begin{equation*}
    \|G_1 \circ \cdots \circ G_d\|_\infty \le \normi{G_1} \cdots \normi{G_d},
\end{equation*}
where $\|G_1 \circ \cdots \circ G_d\|_\infty$ denotes the supremum norm of the function $(G_1 \circ \cdots \circ G_d) \colon [n_1] \times \cdots \times [n_d] \to \mathbb{R}$. In summary, the supremum norm of the contraction is easily bounded by the product of $\normi{\cdot}$'s.
\end{remark}

We start with the following basic perturbation result on a linear system $Ax = b$. 
\begin{lemma}[Theorem 3.48 of \cite{wendland_2017}]
\label{lem:matrix_perturbation}
For $A \in \bR^{m \times n}$, suppose $\mathrm{rank}(A) = n \le m$. Let $\Delta A \in \bR^{m \times n}$ be a perturbation such that $\|A^\dagger\| \|\Delta A\| < 1$. Then, $\mathrm{rank}(A + \Delta A) = n$. Moreover, let $x$ and $x + \Delta x$ be least-squares solutions to linear systems $A x = b$ and $(A + \Delta A) x = b + \Delta b$, respectively. Then, 
\begin{equation*}
    \frac{\|\Delta x\|}{\|x\|} \le \frac{\|A\| \|A^\dagger\|}{1 - \|A^\dagger\| \|\Delta A\|} \left[\frac{\|\Delta A\|}{\|A\|} \left(1 + \kappa(A) \frac{\|A x - b\|}{\|A\| \|x\|}\right) + \frac{\|\Delta b\|}{\|A\| \|x\|}\right].
\end{equation*} 
\end{lemma}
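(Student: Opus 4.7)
Since this is the classical Wedin-type perturbation bound for least-squares solutions, my plan is to derive it by combining a Neumann-series argument for the perturbed pseudoinverse with a careful decomposition of the right-hand side into a part in $\mathrm{Range}(A)$ and an orthogonal residual part.

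First I would verify that $A + \Delta A$ retains full column rank. Because $A$ has full column rank $n$, we can write $A^\dagger = (A^T A)^{-1} A^T$ and $\|A^\dagger\|^2 = 1/\sigma_n(A)^2$. The hypothesis $\|A^\dagger\|\|\Delta A\| < 1$ implies that $\sigma_n(A + \Delta A) \ge \sigma_n(A) - \|\Delta A\| > 0$ by Weyl's inequality, so $\mathrm{rank}(A + \Delta A) = n$ and $(A + \Delta A)^\dagger$ is well-defined. The Neumann-type bound
\begin{equation*}
    \|(A + \Delta A)^\dagger\| \;\le\; \frac{\|A^\dagger\|}{1 - \|A^\dagger\|\|\Delta A\|}
\end{equation*}
follows from the same reasoning.

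Next I would write $x = A^\dagger b$ and $x + \Delta x = (A + \Delta A)^\dagger (b + \Delta b)$, decompose $b = Ax + r$ with $r = b - Ax$ orthogonal to $\mathrm{Range}(A)$, and apply the standard perturbation identity for the pseudoinverse of full-column-rank matrices:
\begin{equation*}
    (A + \Delta A)^\dagger - A^\dagger \;=\; -(A + \Delta A)^\dagger (\Delta A)\, A^\dagger \;+\; \bigl[(A+\Delta A)^T(A+\Delta A)\bigr]^{-1} (\Delta A)^T (I - A A^\dagger).
\end{equation*}
The first term on the right produces the usual $\|\Delta A\|/\|A\|$ contribution when applied to $Ax$, while the second term is the one responsible for the residual-dependent piece: applied to $r$ it yields a factor $\|A^\dagger\|^2 \|\Delta A\|\|r\|$, which after multiplication and division by $\|A\|\|x\|$ produces the $\kappa(A) \|Ax - b\|/(\|A\|\|x\|)$ term. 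The perturbation of the data $\Delta b$ contributes $(A+\Delta A)^\dagger \Delta b$, giving the term $\|\Delta b\|/(\|A\|\|x\|)$ after the same rescaling.

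Putting these three contributions together and using $\|I - AA^\dagger\| \le 1$, $\|(A+\Delta A)^\dagger\| \le \|A^\dagger\|/(1 - \|A^\dagger\|\|\Delta A\|)$, and $\kappa(A) = \|A\|\|A^\dagger\|$, one obtains the claimed inequality. The main obstacle is the residual term: one must correctly track the component of $r$ orthogonal to $\mathrm{Range}(A)$, since dropping it would miss the $\kappa(A)\|Ax - b\|$ contribution that is essential when the least-squares residual is nonzero. The rest is a routine exercise of collecting terms and factoring out $\|A\|\|x\|$ to produce the quoted relative-error form. Since this is essentially Theorem 3.48 of \cite{wendland_2017}, I would simply cite that reference rather than reproduce the calculation in full.
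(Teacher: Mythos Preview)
The paper does not prove this lemma at all: it is stated as a direct citation of Theorem~3.48 in \cite{wendland_2017} and then immediately used in the proof of Lemma~\ref{lem:total-perturbation}. Your final sentence---that you would simply cite Wendland rather than reproduce the calculation---is exactly what the paper does, so in that sense your proposal matches. The sketch you provide (Weyl's inequality for the rank statement, the Wedin/Stewart pseudoinverse perturbation identity, and separate handling of the residual $r = b - Ax$) is a correct outline of the standard textbook argument, but none of it appears in the paper itself.
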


Using this we prove the following lemma which bounds the perturbation of solutions of the tensor equation $A \circ X = B,$ where $A$ is a matrix, and both $X$ and $B$ are three-tensors. The contraction here is performed over the second index of $A$ and the first index of $X$. 

\begin{lemma}
\label{lem:total-perturbation}
For $A \in \bR^{m \times n}$ suppose $\mathrm{rank}(A) = n \le m$. Let $\Delta A \in \bR^{m \times n}$ be a perturbation such that $\|A^\dagger\| \|\Delta A\| < 1$. Then, $\mathrm{rank}(A + \Delta A) = n$. Let $B\in \mathbb{R}^{m\times \ell_1 \times \ell_2}$ and $\Delta B$ be its perturbation. Also, let $X \in \bR^{n \times \ell_1 \times \ell_2}$ and $X + \Delta X$ be least-squares solutions to the tensor equations $A\circ X = B$ and $(A + \Delta A) \circ X = B + \Delta B$, respectively. Suppose the column space of $B \in \bR^{m \times \ell_1\times \ell_2}$ is contained in that of $A$, then
\begin{equation*}
    \normi{\Delta X} \le \frac{\sqrt{2 m \ell_2}\|A^\dagger\|}{1 - \|A^\dagger\| \|\Delta A\|} \left( \|\Delta A\| \normi{X} +  \|\Delta B\|_\infty\right).
\end{equation*}
 In particular, if $\normi{X}\ge\chi >0$ for some constant $\chi,$ and $\Delta A$ satisfies  $\|A^\dagger\| \|\Delta A\| \le 1 / 2,$ then 
\begin{equation*}
    \frac{\normi{\Delta X}}{\normi{X}} \le {\sqrt{8 m \ell_2}  \|A^\dagger\|}\,\left(\|\Delta A\| + \|\Delta B\|_\infty\chi^{-1}\right).
\end{equation*}
\end{lemma}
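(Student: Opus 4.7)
The plan is to reduce the tensor equation to a family of matrix-valued equations by fixing the middle index $i_2 \in [\ell_1]$, derive a closed-form expression for each slice $\Delta X(\cdot, i_2, \cdot)$ by exploiting that the column space of $B$ lies in that of $A$, and then pass from slice-wise spectral norms to the tensor norm $\normi{\cdot}$. The well-posedness of $\Delta X$ is immediate from the first part of Lemma \ref{lem:matrix_perturbation} applied to $A$ and $\Delta A$: since $\|A^\dagger\|\|\Delta A\| < 1$, the perturbed matrix $C := A + \Delta A$ has rank $n$, possesses a left pseudoinverse satisfying $C^\dagger C = I$, and the slice-wise least-squares problem is uniquely solved by $(X + \Delta X)(\cdot, i_2, \cdot) = C^\dagger (B + \Delta B)(\cdot, i_2, \cdot)$.

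The central step is the slice-wise identity
\[
\Delta X(\cdot, i_2, \cdot) = C^\dagger\bigl(\Delta B(\cdot, i_2, \cdot) - \Delta A\,X(\cdot, i_2, \cdot)\bigr),
\]
which I would derive by writing $CX(\cdot, i_2, \cdot) = AX(\cdot, i_2, \cdot) + \Delta A\,X(\cdot, i_2, \cdot)$ and invoking the column-space hypothesis to assert $AX(\cdot, i_2, \cdot) = B(\cdot, i_2, \cdot)$ exactly (not merely in the least-squares sense). Subtracting this from $C(X + \Delta X)(\cdot, i_2, \cdot) = (B + \Delta B)(\cdot, i_2, \cdot)$ and applying $C^\dagger$ on the left produces the identity.

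From there the bound reduces to standard norm manipulations. Taking spectral norms and using $\|C^\dagger\| \le \|A^\dagger\|/(1 - \|A^\dagger\|\|\Delta A\|)$ (also implicit in Lemma \ref{lem:matrix_perturbation}) gives
\[
\|\Delta X(\cdot, i_2, \cdot)\| \le \frac{\|A^\dagger\|}{1 - \|A^\dagger\|\|\Delta A\|}\bigl(\|\Delta B(\cdot, i_2, \cdot)\| + \|\Delta A\|\,\|X(\cdot, i_2, \cdot)\|\bigr).
\]
I would then bound $\|X(\cdot, i_2, \cdot)\| \le \normi{X}$ directly and $\|\Delta B(\cdot, i_2, \cdot)\| \le \|\Delta B(\cdot, i_2, \cdot)\|_F \le \sqrt{m\ell_2}\,\|\Delta B\|_\infty$, maximize over $i_2 \in [\ell_1]$, and absorb the unit coefficient of $\|\Delta A\|\,\normi{X}$ into the common prefactor $\sqrt{2m\ell_2}$ (valid since $m\ell_2 \ge 1$). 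This yields the first displayed estimate.

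The specialization follows by pure algebra: the hypothesis $\|A^\dagger\|\|\Delta A\| \le 1/2$ gives $1 - \|A^\dagger\|\|\Delta A\| \ge 1/2$, so the prefactor doubles to at most $\sqrt{8m\ell_2}\,\|A^\dagger\|$; dividing through by $\normi{X}$ and using $\normi{X} \ge \chi$ to bound $\|\Delta B\|_\infty / \normi{X} \le \|\Delta B\|_\infty\chi^{-1}$ finishes. The only nontrivial ingredient is the closed-form identity for $\Delta X(\cdot, i_2, \cdot)$, whose derivation hinges on the column-space hypothesis converting $X$ from a least-squares solution into an exact one; I do not foresee any further serious obstacle.
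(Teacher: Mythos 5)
Your derivation of the key identity $\Delta X(\cdot, i_2, \cdot) = C^\dagger\bigl(\Delta B(\cdot, i_2, \cdot) - \Delta A\,X(\cdot, i_2, \cdot)\bigr)$ with $C := A + \Delta A$ contains an error. You subtract $CX(\cdot,i_2,\cdot) = B(\cdot,i_2,\cdot) + \Delta A\,X(\cdot,i_2,\cdot)$ (correct, given the column-space hypothesis $AX = B$) from $C(X + \Delta X)(\cdot, i_2, \cdot) = (B + \Delta B)(\cdot, i_2, \cdot)$, but this second equation is generally false: when $m > n$ the perturbed least-squares system need not be consistent. Nothing places $B + \Delta B$ in the column space of $C$, so all one knows is $(X + \Delta X)(\cdot,i_2,\cdot) = C^\dagger(B + \Delta B)(\cdot,i_2,\cdot)$, not $C(X + \Delta X)(\cdot,i_2,\cdot) = (B + \Delta B)(\cdot,i_2,\cdot)$ — the residual $(I - CC^\dagger)(B+\Delta B)(\cdot,i_2,\cdot)$ is in general nonzero. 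The column-space hypothesis makes the \emph{unperturbed} solution exact, but says nothing about the perturbed one.

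The identity you want is nonetheless true and follows by a small repair: write $\Delta X(\cdot,i_2,\cdot) = C^\dagger(B + \Delta B)(\cdot,i_2,\cdot) - A^\dagger B(\cdot,i_2,\cdot) = C^\dagger(B + \Delta B)(\cdot,i_2,\cdot) - X(\cdot,i_2,\cdot)$, then use $B(\cdot,i_2,\cdot) = AX(\cdot,i_2,\cdot) = (C - \Delta A)X(\cdot,i_2,\cdot)$ together with $C^\dagger C = I$ (valid since $C$ has full column rank) to get $C^\dagger B(\cdot,i_2,\cdot) = X(\cdot,i_2,\cdot) - C^\dagger \Delta A\,X(\cdot,i_2,\cdot)$, hence $\Delta X(\cdot,i_2,\cdot) = C^\dagger(\Delta B(\cdot,i_2,\cdot) - \Delta A\,X(\cdot,i_2,\cdot))$. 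With this fix the rest of your argument is sound: the bound $\|C^\dagger\| \le \|A^\dagger\|/(1 - \|A^\dagger\|\|\Delta A\|)$ is standard (though only implicit in the cited Lemma~\ref{lem:matrix_perturbation}), and your slice-wise spectral-norm estimate followed by absorbing the coefficients $1$ and $\sqrt{m\ell_2}$ into $\sqrt{2m\ell_2}$ reproduces the stated inequality. For comparison, the paper instead applies Lemma~\ref{lem:matrix_perturbation} column by column (fixing both $i_1$ and $i_2$) and then aggregates Frobenius norms via $(a+b)^2 \le 2(a^2+b^2)$; your explicit slice-level formula sidesteps that quadratic inequality and is arguably cleaner once the derivation is corrected.
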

\begin{proof}
    For any $i=(i_1,i_2)$ with $1\le i_1 \le \ell_1$ and $1 \le i_2 \le \ell_2$, we set $x_i= X(\cdot, i_1,i_2)$ and $b_i = B(\cdot,i_1,i_2)$ to be ``columns'' of $X$ and $B$ respectively. For each equation, since $b_i$ is contained in the column space of $A$, the previous lemma implies that 
    \begin{equation*}
        \|\Delta x_i\| 
        \le \frac{\|A\| \|A^\dagger\|}{1 - \|A^\dagger\| \|\Delta A\|} \left(\frac{\|\Delta A\|}{\|A\|} \|x_{i}\| + \frac{\|\Delta b_{i}\|}{\|A\|}\right)
        =
        \underbrace{\frac{\|A^\dagger\|}{1 - \|A^\dagger\| \|\Delta A\|}}_{=: C} \left(\|\Delta A\| \|x_i\| + \|\Delta b_{i}\|\right).
    \end{equation*}
    Now, for each $1\le i_1\le \ell_1,$
    \begin{equation*}
    \begin{split}
          \| \Delta X(\cdot, i_1, \cdot)\|_F
          & =\sum_{j=1}^{n}\sum_{i_2=1}^{\ell_2}\left| \Delta X(j, i_1, i_2)\right|^2\\ &=\sum_{i_2=1}^{\ell_2} \|\Delta x_{(i_1,i_2)}\|^2 \\
        & \le \sum_{i_2=1}^{\ell_2} C^2 (\|\Delta A\|\, \|x_{(i_1,i_2)}\| + \|\Delta b_{(i_1,i_2)}\|)^2 \\
        & \le \sum_{i_2=1}^{\ell_2} 2C^2 (\|\Delta A\|^2 \|x_{(i_1,i_2)}\|^2 + \|\Delta b_{(i_1,i_2)}\|^2) \\
        & = 2 C^2 (\|\Delta A\|^2 \|X(\cdot, i_1, \cdot)\|_F^2 + \|\Delta B(\cdot, i_1, \cdot)\|_F^2) \\
        & \le 2 C^2 (\ell_2 \|\Delta A\|^2 \|X(\cdot, i_1, \cdot)\|^2 + m \ell_2 \|\Delta B\|_\infty^2).
    \end{split}
    \end{equation*}
    Thus,
    \begin{equation*}
    \begin{split}
        \normi{\Delta X}
        & = \max_{i_1} \|\Delta X(\cdot, i_1, \cdot)\| \\
        & \le \max_{i_1} \|\Delta X(\cdot, i_1, \cdot)\|_F \\
        & \le \left(2 C^2 (\ell_2 \|\Delta A\|^2 \max_{i_1} \|X(\cdot, i_1, \cdot)\|^2 + m \ell_2 \|\Delta B\|_\infty^2)\right)^{1 / 2} \\
        & = \left(2 C^2 (\ell_2 \|\Delta A\|^2 \normi{X}^2 + m \ell_2 \|\Delta B\|_\infty^2)\right)^{1 / 2} \\
        & \le \sqrt{2 m \ell_2} C \left( \|\Delta A\| \normi{X} +  \|\Delta B\|_\infty\right),
    \end{split}
    \end{equation*}
    from which the rest of the result follows immediately.
\end{proof}

\begin{lemma}
    \label{lem:contraction_bound}
    Let $G_1 \colon [n_1] \times [r_1] \to \bR$, $G_k \colon [r_{k - 1}] \times [n_k] \times [r_{k}] \to \bR$ for $k = 2, \ldots, d - 1$, and $G_{d} \colon [r_{d - 1}] \times [n_d] \to \bR$. Denote their corresponding perturbations by $\Delta G_k$. Suppose that there exist $\delta_k > 0,$ $k=1,\dots,d$ such that $\normi{\Delta G_k} \le \delta_k \normi{G_k}$ for all $k = 1, \ldots, d$. Set 
    \begin{equation*}
        \Delta (G_1 \circ \cdots \circ G_d) := (G_1 + \Delta G_1) \circ \cdots \circ (G_d + \Delta G_d) - G_1 \circ \cdots \circ G_d.
    \end{equation*}
    Then
    \begin{equation*}
        \|\Delta (G_1 \circ \cdots \circ G_d)\|_\infty
        \le 
        \normi{G_1} \cdots \normi{G_d}\left(\sum_{k=1}^d \delta_k \right) \exp \left(\sum_{k=1}^d \delta_k \right).
    \end{equation*}
    \end{lemma}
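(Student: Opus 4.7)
\medskip

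\noindent\textbf{Proof plan.} The plan is to expand the perturbed contraction in the standard telescoping/multilinear way, bound each resulting term using the submultiplicativity of $\normi{\cdot}$ noted in Remark~\ref{rmk:3-tensor-norm}, and then sum the resulting combinatorial series.

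First I would expand
\eqs{
    (G_1 + \Delta G_1) \circ \cdots \circ (G_d + \Delta G_d)
    = \sum_{\mathcal{S} \subseteq [d]} H^{\mathcal{S}}_1 \circ \cdots \circ H^{\mathcal{S}}_d,
}
where $H^{\mathcal{S}}_k = \Delta G_k$ if $k \in \mathcal{S}$ and $H^{\mathcal{S}}_k = G_k$ otherwise. Subtracting the unperturbed contraction removes the $\mathcal{S} = \emptyset$ term, giving
\eqs{
    \Delta (G_1 \circ \cdots \circ G_d)
    = \sum_{\emptyset \neq \mathcal{S} \subseteq [d]} H^{\mathcal{S}}_1 \circ \cdots \circ H^{\mathcal{S}}_d.
}

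Next, by the submultiplicative property of $\normi{\cdot}$ with respect to contractions (Remark~\ref{rmk:3-tensor-norm}), each summand satisfies
\eqs{
    \| H^{\mathcal{S}}_1 \circ \cdots \circ H^{\mathcal{S}}_d \|_\infty
    \leq \prod_{k \in \mathcal{S}} \normi{\Delta G_k} \prod_{k \notin \mathcal{S}} \normi{G_k}
    \leq \left(\prod_{k \in \mathcal{S}} \delta_k\right) \prod_{k = 1}^{d} \normi{G_k},
}
using the hypothesis $\normi{\Delta G_k} \leq \delta_k \normi{G_k}$. Applying the triangle inequality and summing over non-empty $\mathcal{S}$ yields
\eqs{
    \|\Delta (G_1 \circ \cdots \circ G_d)\|_\infty
    \leq \normi{G_1} \cdots \normi{G_d} \left(\prod_{k = 1}^{d}(1 + \delta_k) - 1\right).
}

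The final step is to bound the combinatorial factor. Using $1 + \delta_k \leq e^{\delta_k}$ gives $\prod_k (1 + \delta_k) \leq \exp(\sum_k \delta_k)$, and then the elementary inequality $e^x - 1 \leq x e^x$ for $x \geq 0$ (immediate from comparing Taylor series term by term) produces
\eqs{
    \prod_{k = 1}^{d}(1 + \delta_k) - 1 \leq \left(\sum_{k = 1}^{d} \delta_k\right) \exp\left(\sum_{k = 1}^{d} \delta_k\right),
}
which matches the claimed bound. There is no real obstacle here: the only point that deserves care is verifying that the submultiplicativity in Remark~\ref{rmk:3-tensor-norm} applies uniformly to the $2^d - 1$ ``mixed'' contractions that appear after the expansion, but this is immediate since the norm $\normi{\cdot}$ was defined precisely so that every such contraction of three-tensors (with boundary matrices $G_1$ and $G_d$ handled as noted in the remark) is controlled by the product of the individual $\normi{\cdot}$ values.
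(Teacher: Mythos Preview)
Your proof is correct. The route differs from the paper's in the choice of decomposition: the paper writes a telescoping sum of $d$ terms,
\[
\Delta(G_1\circ\cdots\circ G_d)=\sum_{k=1}^{d} G_1\circ\cdots\circ G_{k-1}\circ \Delta G_k\circ G_{k+1}^\ast\circ\cdots\circ G_d^\ast,
\]
where $G_j^\ast=G_j+\Delta G_j$, and then bounds each summand by $\delta_k\prod_{j>k}(1+\delta_j)\prod_i\normi{G_i}\le \delta_k\exp(\sum_j\delta_j)\prod_i\normi{G_i}$, summing over $k$ to get the claim directly. You instead expand fully over all $2^d-1$ nonempty subsets and arrive at the sharper intermediate bound $\prod_k(1+\delta_k)-1$, which you then weaken via $e^x-1\le x e^x$ to match the stated form. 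Both arguments rest on the same submultiplicativity from Remark~\ref{rmk:3-tensor-norm}; the telescoping version keeps the number of terms linear in $d$ and reaches the final expression in one step, while your version yields a slightly tighter intermediate estimate at the cost of an extra elementary inequality.
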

    
    The following corollary is an immediate consequence of the previous lemma.
    \begin{cor}\label{cor:eps}
    Under the same assumptions as the previous lemma, let $\epsilon \in (0,1)$ be given. If  $\delta := \max_{1 \le k \le d} \delta_k \le \epsilon / (3 d)$  then
    \begin{equation*}
        \frac{\|\Delta (G_1 \circ \cdots \circ G_d)\|_\infty}{\normi{G_1} \cdots \normi{G_d}}
        \le \epsilon.
    \end{equation*}
\end{cor}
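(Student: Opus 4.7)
The plan is to expand the perturbed contraction by multilinearity, bound each of the $2^d$ resulting terms via the sub-multiplicativity recalled in the remark on the $\normi{\cdot}$ norm, and then collect everything into a clean scalar inequality.

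Concretely, by multilinearity of the contraction operation one has
\begin{equation*}
  (G_1 + \Delta G_1) \circ \cdots \circ (G_d + \Delta G_d)
  = \sum_{S \subseteq [d]} H_S,
\end{equation*}
where for each $S \subseteq [d]$, $H_S$ denotes the contraction in which the $k$-th factor is $\Delta G_k$ if $k \in S$ and $G_k$ otherwise. The term $H_{\emptyset}$ is exactly $G_1 \circ \cdots \circ G_d$, so
\begin{equation*}
  \Delta(G_1 \circ \cdots \circ G_d) = \sum_{\emptyset \ne S \subseteq [d]} H_S.
\end{equation*}
Since each $H_S$ is itself a contraction of cores, the inequality from the remark following the definition of $\normi{\cdot}$ yields $\|H_S\|_\infty \le \prod_{k \in S} \normi{\Delta G_k}\prod_{k \notin S} \normi{G_k}$. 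Applying the hypothesis $\normi{\Delta G_k} \le \delta_k \normi{G_k}$ gives
\begin{equation*}
  \|H_S\|_\infty \le \left(\prod_{k=1}^d \normi{G_k}\right) \prod_{k \in S} \delta_k.
\end{equation*}

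Summing over all nonempty $S$ and using the triangle inequality then produces
\begin{equation*}
  \|\Delta(G_1 \circ \cdots \circ G_d)\|_\infty
  \le \left(\prod_{k=1}^d \normi{G_k}\right) \left(\prod_{k=1}^d (1 + \delta_k) - 1\right),
\end{equation*}
after recognizing the subset sum $\sum_{\emptyset \ne S \subseteq [d]} \prod_{k \in S} \delta_k = \prod_{k=1}^d (1+\delta_k) - 1$. Writing $s := \sum_{k=1}^d \delta_k$, I then apply the scalar inequality $1 + x \le e^x$ coordinate-wise to get $\prod_k(1+\delta_k) \le e^s$, followed by the elementary bound $e^s - 1 \le s e^s$ (which is immediate from $f(s) := s e^s - (e^s - 1)$ satisfying $f(0)=0$ and $f'(s) = s e^s \ge 0$). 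This chain gives exactly the stated bound $\left(\sum_k \delta_k\right)\exp\left(\sum_k \delta_k\right)$.

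There is no real obstacle in this argument; the only subtlety is making sure the sub-multiplicative bound $\|G_1 \circ \cdots \circ G_d\|_\infty \le \normi{G_1}\cdots\normi{G_d}$ is invoked correctly for the boundary factors $G_1$ and $G_d$, but this is exactly the extension of $\normi{\cdot}$ to matrices spelled out in the preceding remark, so it applies verbatim to each $H_S$. The corollary is then immediate: if $\delta := \max_k \delta_k \le \epsilon/(3d)$, then $s \le \epsilon/3 \le 1/3$, so $s \exp(s) \le (\epsilon/3)e^{1/3} \le \epsilon$, yielding the desired relative error bound.
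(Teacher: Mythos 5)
Your proof is correct. Note, however, that the corollary is stated with Lemma~\ref{lem:contraction_bound} as an available premise, so the intended argument is a one-liner: by that lemma,
\begin{equation*}
  \frac{\|\Delta(G_1\circ\cdots\circ G_d)\|_\infty}{\normi{G_1}\cdots\normi{G_d}} \le s\,e^s, \qquad s:=\sum_{k=1}^d\delta_k,
\end{equation*}
and $\delta\le\epsilon/(3d)$ gives $s\le d\delta\le\epsilon/3<1/3$, whence $s e^s\le(\epsilon/3)\,e^{1/3}<\epsilon$. You instead re-derive the lemma from scratch via a multilinear expansion of $(G_1+\Delta G_1)\circ\cdots\circ(G_d+\Delta G_d)$ into $2^d$ terms indexed by subsets $S\subseteq[d]$, whereas the paper's proof of the lemma uses a telescoping sum of $d$ terms, swapping one factor at a time from $G_k^*=G_k+\Delta G_k$ to $G_k$. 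Both are valid, and your route in fact produces the slightly sharper intermediate bound $\prod_k(1+\delta_k)-1\le e^s-1$, which you then deliberately weaken to $s e^s$ to match the lemma's stated form (using $e^s-1\le s e^s$). The concluding numeric step is identical either way; you have simply done more work than the corollary requires.
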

\begin{proof}[Proof of Lemma \ref{lem:contraction_bound}]
    For ease of exposition, we set $G_k^* = G_k +\Delta G_k$ for $k=1,\dots,d$. Next, we observe that
    \begin{equation}\label{eqn:telescope}
    \begin{split}
        \Delta (G_1 \circ \dots \circ G_d) =& (G_1^* \circ \dots \circ G_d^*) - (G_1 \circ G_2^* \circ \dots \circ G_d^*) \\
        &+(G_1\circ G_2^* \circ \dots \circ G_d^*) - (G_1\circ G_2 \circ G_3^* \circ \dots \circ G_d^*) \\
        & + \dots \\
        & + (G_1 \circ \dots \circ G_{d-1} \circ G_d^*) -(G_1 \circ \dots \circ G_d).
    \end{split}
    \end{equation}
    The first line on the right-hand side of the previous equation reduces to $\Delta G_1 \circ G_2^* \circ \dots G_d^*$. As in Remark \ref{rmk:3-tensor-norm}, 
    \begin{equation*}
        \|\Delta G_1 \circ G_2^\ast \circ \cdots \circ G_d^\ast\|_\infty
        \le
        \normi{\Delta G_1} \normi{G_2^\ast} \cdots \normi{G_d^\ast}.
    \end{equation*}
    Furthermore, for $k = 1, \ldots, d$,
    $$\normi{G_k+\Delta G_k} \le \normi{G_k}+\normi{\Delta G_k} \le (1+\delta_k)\normi{G_k},$$
    and hence
    $$ \|\Delta G_1 \circ G_2^\ast \circ \cdots \circ G_d^\ast\|_\infty
    \le \delta_1 \prod_{k = 2}^{d} (1 + \delta_k) \normi{G_1} \cdots \normi{G_d}.$$
    The other lines on the right-hand side of \eqref{eqn:telescope} can be bounded similarly. Thus, summing over all the terms on the right-hand side of \eqref{eqn:telescope}, we find
    $$\|\Delta (G_1 \circ \dots \circ G_d)\|_\infty \le \normi{G_1} \cdots \normi{G_d} \left(\sum_{k=1}^d \delta_k \right)\exp\left(\sum_{k=1}^d \delta_k \right),$$
    where we have used the fact that $1+x<\exp(x)$. 
\end{proof}

\begin{remark}
We note that in the previous lemma, the bounds we obtain are quite pessimistic, since they do not account for possible cancellations in contractions of multiple $G_k$'s. The product of $\normi{G_k}$'s could instead be replaced by the more cumbersome, but sharper, expression
$$ \max_k \max_{\sigma_\ell,\sigma_r\in\{0,1\}} \max_{x_1,\dots,x_d} \|G^{\sigma_\ell}_1\circ \dots \circ G_{k-1}^{\sigma_\ell}\| \cdot \normi{G_k}\cdot \|G^{\sigma_r}_{k+1} \circ \dots \circ G^{\sigma_r}_d\|, $$
where $G^0_k = G_k$ and $G^1_k = G^*_k.$
\end{remark}

\subsection{Perturbation results}

We have seen from Theorem \ref{thm:1} that under certain mild assumptions Algorithm \ref{alg:1} produces a well-defined set of matrix equations \eqref{eq:alg-CDEs}. The following result shows that small perturbations of the coefficients and the right-hand sides of \eqref{eq:alg-CDEs} result in small perturbations of the output of Algorithm \ref{alg:1}.

\begin{lemma}
\label{lem:perturbation}
Under the assumptions of Theorem \ref{thm:1}, let $G_1, \ldots, G_{d}$ be the solutions to \eqref{eq:alg-CDEs}. Given $\delta \in (0, 1)$, suppose that the coefficients and right-hand sides of \eqref{eq:alg-CDEs} are perturbed such that
\begin{equation*}
    \|\Delta A_{1}\|, \ldots, \|\Delta A_{d - 1}\|, \|\Delta B_1\|_\infty, \ldots, \|\Delta B_{d}\|_\infty
    \le \delta \beta 
    =: \delta \left(\sqrt{8 r \max(m, n)} c_A \left(1+\frac{1}{c_G}\right)\right)^{-1}
\end{equation*}
where the constants are defined as follows:
\begin{itemize}    
    \item $r = \max_{1 \le k \le d - 1} r_{k}$,
    \item $m = \max_{1 \le k \le d - 1} m_{k}$,
    \item $n = \max_{1 \le k \le d} n_{k}$,
    \item $c_{G} = \min_{1 \le k \le d} \normi{G_k}$,
    \item $c_A = 1 \vee \max_{1 \le k \le d-1} \|A_k^\dagger\|$.
\end{itemize}
Then, the perturbed version of \eqref{eq:alg-CDEs} has $G_k + \Delta G_k$ as least-squares solutions such that
\begin{equation*}
   \frac{\normi{\Delta G_k}}{\normi{G_k}} \le \delta.
\end{equation*}
\end{lemma}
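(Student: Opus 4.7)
The plan is to decouple the system \eqref{eq:alg-CDEs} into $d$ independent equations and apply Lemma~\ref{lem:total-perturbation} (together with its matrix specialization) to each, using the uniform bounds $\|A_k^\dagger\|\le c_A$, $\normi{G_k}\ge c_G$, $r_k\le r$, $m_k\le m$, and $n_k\le n$. Since the cores $G_1,\ldots,G_d$ are determined from $A_1,\ldots,A_{d-1}$ and $B_1,\ldots,B_d$ via $d$ separate least-squares problems, the core-by-core perturbation analysis should be straightforward once Lemma~\ref{lem:total-perturbation} is invoked correctly.

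Concretely, for a middle index $2\le k\le d-1$, the $k$-th equation is $A_{k-1}\circ G_k=B_k$ with $A_{k-1}\in\bR^{m_{k-1}\times r_{k-1}}$, $G_k\in\bR^{r_{k-1}\times n_k\times r_k}$, $B_k\in\bR^{m_{k-1}\times n_k\times r_k}$. Under the assumption $\|\Delta A_{k-1}\|\le\delta\beta$, the hypothesis $\|A_{k-1}^\dagger\|\|\Delta A_{k-1}\|\le 1/2$ follows from the definition of $\beta$, since
\begin{equation*}
    c_A\,\delta\beta
    \;=\;\frac{\delta}{\sqrt{8r\max(m,n)}\,(1+1/c_G)}
    \;\le\;\tfrac12.
\end{equation*}
Applying Lemma~\ref{lem:total-perturbation} with $\ell_1=n_k$, $\ell_2=r_k$, and $\chi=c_G$ then yields
\begin{equation*}
    \frac{\normi{\Delta G_k}}{\normi{G_k}}
    \;\le\;\sqrt{8m_{k-1}r_k}\,c_A\bigl(\|\Delta A_{k-1}\|+\|\Delta B_k\|_\infty\,c_G^{-1}\bigr)
    \;\le\;\sqrt{8m r}\,c_A\,\delta\beta\,(1+c_G^{-1})
    \;=\;\delta,
\end{equation*}
where the final equality uses the defining formula for $\beta$.

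For the endpoints, the first equation $G_1=B_1$ gives $\Delta G_1=\Delta B_1$, so $\normi{\Delta G_1}\le\sqrt{r}\,\|\Delta B_1\|_\infty\le\sqrt{r}\,\delta\beta$; dividing by $\normi{G_1}\ge c_G$ and noting that $\sqrt{r}\,\beta/c_G\le 1$ (again by the definition of $\beta$) gives $\normi{\Delta G_1}/\normi{G_1}\le\delta$. The last equation $A_{d-1}\circ G_d=B_d$ is handled by the same argument as the middle case, viewing $G_d$ as a 3-tensor with trivial third axis so that $\ell_2=1$. Combining the three cases gives the stated bound. The only subtlety I anticipate is bookkeeping of the dimension constants and the factor $(1+c_G^{-1})$ that arises when converting the additive $\|\Delta B_k\|_\infty$ contribution into a multiplicative bound on $\normi{\Delta G_k}/\normi{G_k}$; this is precisely what the chosen $\beta$ is engineered to absorb, so there should be no further obstacle.
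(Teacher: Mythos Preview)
Your proof is correct and follows essentially the same approach as the paper: decouple \eqref{eq:alg-CDEs} into $d$ independent least-squares problems, verify $\|A_{k-1}^\dagger\|\|\Delta A_{k-1}\|\le 1/2$, and apply Lemma~\ref{lem:total-perturbation} to each. The only cosmetic differences are that you use the slightly sharper bound $\normi{\Delta G_1}\le\sqrt{r}\,\|\Delta B_1\|_\infty$ (the paper uses $\sqrt{nr}$), and your displayed ``$=\delta$'' in the middle case should strictly be ``$\le\delta$'' since $m\le\max(m,n)$.
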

\begin{proof}
    First, we compute a perturbation bound for the solution of the first equation: notice that $\sqrt{8 r \max(m, n)} c_A \ge \sqrt{n r}$, which implies $\beta \le \frac{c_G}{\sqrt{n r}}$, hence
    \begin{equation*}
        \frac{\normi{\Delta G_1}}{\normi{G_1}} 
        = \frac{\normi{\Delta B_1}}{\normi{G_1}} 
        \le \frac{\sqrt{n r} \|\Delta B_1\|_\infty}{\normi{G_1}} 
        \le\frac{\sqrt{n r} \beta \delta}{c_G}
        \le \frac{\sqrt{n r}}{c_G} \frac{c_G}{\sqrt{n r}} \delta
        = \delta.
    \end{equation*}
    Next, we observe that
    $$\|\Delta A_k\| \le \beta \le \frac{1}{2c_A} \le \frac{1}{2 \|A_k^\dagger\|},$$
    from which it follows that $\|\Delta A_k\|\, \|A_k^\dagger\| \le 1/2$ for all $k=1,\dots,d-1,$ and therefore we may apply Lemma \ref{lem:total-perturbation}. In particular,
    \begin{align*}
        \frac{\normi{\Delta G_k}}{\normi{G_k}} 
        & \le \sqrt{8 m_{k - 1} r_k} \|A_{k - 1}^\dagger\|\left(\|\Delta A_{k - 1}\| + \frac{\|\Delta B_k\|_\infty}{c_G}\right) \\
        & \le \sqrt{8 m r} c_A \left(1 + \frac{1}{c_G}\right) \beta \delta \\
        & \le \delta.
    \end{align*}
\end{proof}

Next, we analyze the effect of a perturbation $\Delta p$ of the input $p$ of Algorithm \ref{alg:1}. Having established Lemma \ref{lem:perturbation}, it suffices to quantify $\Delta A_k$ and $\Delta B_k$ in terms of $\Delta p$. First, the perturbation on $\tilde{\Phi}_k$ from \textsc{Sketching} is obvious; we may roughly say $\Delta \tilde{\Phi}_k \approx S_{k - 1} \circ \Delta p \circ T_{k + 1}$. Now that $B_k$ is obtained as the left singular vectors of $\tilde{\Phi}_k$ in \textsc{Trimming}, we invoke Wedin's theorem \cite{wedin_1972} to quantify $\Delta B_k$ in terms of $\Delta \tilde{\Phi}_k$. To this end, we first introduce the following distance comparing two 3-tensors up to rotation, which is common in spectral analysis of linear algebra, see Chapter 2 of \cite{ccfm_2021}.

\begin{defn}
    For any 3-tensors $\hat{G}, G \in \mathbb{R}^{r_1 \times n \times r_2}$, we define 
    \begin{equation*}
        \mathrm{dist}(\hat{G}, G) 
        := \min_{R_1 \in O(r_1), R_2 \in O(r_2)} \normi{\hat{G} - R_1 \circ G \circ R_2}.
    \end{equation*}
    Here, $R_1 \circ G \circ R_2$ denotes a 3-tensor formed by contracting the second index of $R_1$ and the first index of $G$ and contracting the first index of $R_2$ and the third index of $G$.
\end{defn}

Using this distance, we compare the $\hat{G}_1, \ldots, \hat{G}_d$, the which result from applying Algorithm \ref{alg:1} to $\hat{p} = p + \Delta p$ as input, with $G_1, \ldots, G_d$, the results of Algorithm \ref{alg:1} with $p$ as input. We will restrict our analysis to the case where $p$ is a Markov model and Algorithm \ref{alg:1} is implemented with sketches \eqref{eq:sketchT_def} and \eqref{eq:sketchS_def} as in Section \ref{section:discrete markov}.

\begin{remark} As in Remark \ref{rmk:3-tensor-norm}, we define $\mathrm{dist}(\cdot, \cdot)$ for the first and last cores as well. Accordingly, we set
\begin{align*}
    \mathrm{dist}(\hat{G}_1, G_1) & = \min_{R \in O(r_1)} \normi{\hat{G}_1 - G_1 R}, \\
    \mathrm{dist}(\hat{G}_d, G_d) & = \min_{R \in O(r_{d - 1})} \normi{\hat{G}_d - R G_d},
\end{align*}
where $G_1, \hat{G}_1 \colon [n_1] \times [r_1] \to \bR$ and $G_{d}, \hat{G}_d \colon [r_{d - 1}] \times [n_d] \to \bR$ are the first and last cores produced by the algorithm, respectively. Here $\normi{\cdot}$ on the right-hand sides of the previous equations are the norms defined for the first and last cores introduced in Remark \ref{rmk:3-tensor-norm}.
\end{remark}

\begin{prop}
\label{prop:perturbation-markov}
Under the assumptions of Theorem \ref{thm:discrete-markov}, let $G_1, \ldots, G_d$ be the cores of $p$ obtained as solutions to \eqref{eq:alg-CDEs}. Suppose we apply Algorithm \ref{alg:1} to the perturbed input $\hat{p} = p + \Delta p$ with sketches \eqref{eq:sketchT_def} and \eqref{eq:sketchS_def} as in Theorem \ref{thm:discrete-markov}; the results are denoted as $\hat{G}_1, \ldots, \hat{G}_d$. Suppose further that for some fixed $\delta \in (0, 1)$,   
\begin{equation}
\label{eq:Delta-p-condition}
\begin{split}
    & \|\Delta p(x_1; x_2)\|_\infty, 
    \|\Delta p(x_{1}, x_{2}; x_{3})\|_\infty, \ldots, \|\Delta p(x_{d - 2}, x_{d - 1}; x_{d})\|_\infty
    \|\Delta p(x_{d - 1}; x_{d})\|_\infty \\
    & \le \frac{c_P}{2n^2(1+c_P)}\left( \sqrt{8 r n} c_A \left(1+\frac{1}{c_G}\right)\right)^{-1} \delta 
    =: \gamma \delta 
\end{split}
\end{equation}
where the constants are defined as follows:
\begin{itemize}    
    \item $n = \max_{1 \le k \le d} n_{k}$,
    \item $c_P = \sigma_{r_1}(p(x_1; x_2)) \wedge \min_{k = 2, \ldots, d - 1} \sigma_{r_{k}}(p(x_{k - 1}, x_{k}; x_{k + 1}))$,
    \item $c_{G} = \min_{1\le k \le d} \normi{G_k}$,
    \item $c_A = 1 \vee \max_{1 \le k \le d - 1} \|A_k^\dagger\|$.
\end{itemize}
Then, for $k = 1, \ldots, d$,
\begin{equation*}
    \frac{\mathrm{dist}(\hat{G}_k, G_k)}{\normi{G_k}} \le \delta.
\end{equation*}
\end{prop}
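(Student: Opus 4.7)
The strategy is to propagate the input perturbation $\Delta p$ through the three subroutines \textsc{Sketching}, \textsc{Trimming}, and \textsc{SystemForming} in turn, thereby quantifying the induced perturbations in $\tilde{\Phi}_k$, $B_k$, and $A_k$, and then to invoke Lemma \ref{lem:perturbation} to conclude. A key subtlety is that \textsc{Trimming} returns left singular vectors, which are only defined up to orthogonal rotations $R_k \in O(r_k)$, so the analysis must be carried out modulo a compatible choice of the $R_k$'s; this is the whole reason why the conclusion is phrased in terms of $\mathrm{dist}(\cdot,\cdot)$ rather than $\normi{\cdot}$.

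First, for the Markov sketches \eqref{eq:sketchT_def} and \eqref{eq:sketchS_def}, each $\tilde{\Phi}_k$ is a small marginal of $p$ according to \eqref{eq:tildep_markov}, so $\hat{\tilde{\Phi}}_k - \tilde{\Phi}_k$ is the corresponding marginal of $\Delta p$. Both $\|\hat{\tilde{\Phi}}_k - \tilde{\Phi}_k\|_\infty$ and the spectral norm $\|\hat{\tilde{\Phi}}_k - \tilde{\Phi}_k\|$ are then controlled by a polynomial in $n$ times the entrywise bound \eqref{eq:Delta-p-condition} on $\Delta p$, using $\|M\| \le \sqrt{\dim}\,\|M\|_\infty$. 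Next, $B_k$ and $\hat{B}_k$ are the top-$r_k$ left singular vectors of $\tilde{\Phi}_k$ and $\hat{\tilde{\Phi}}_k$, respectively. By Lemma \ref{lem:markov}, $\tilde{\Phi}_k(\beta_{k-1},x_k;\gamma_k)$ inherits the column space of the $k$-th unfolding of $p$, so $\sigma_{r_k}(\tilde{\Phi}_k) \ge c_P$; Wedin's $\sin\Theta$ theorem then produces rotations $R_k \in O(r_k)$ such that
\[
    \|\hat{B}_k - B_k R_k\|_F \;\lesssim\; \frac{\|\hat{\tilde{\Phi}}_k - \tilde{\Phi}_k\|}{c_P},
\]
so that the rotated perturbation $\Delta B_k := \hat{B}_k - B_k R_k$ is small in both Frobenius and entrywise norm.

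For the Markov case, \textsc{SystemForming} computes $A_k(x_k,\alpha_k) = \sum_{x_{k-1}} B_k(x_{k-1},x_k,\alpha_k)$, a linear operation on the $(\beta_{k-1},x_k)$ axis that commutes with right-multiplication on the $\alpha_k$ axis. Hence $\hat{A}_k - A_k R_k$ equals the same summation applied to $\Delta B_k$, giving a spectral-norm bound with at worst an extra factor of $n$. Now define the rotated cores $\tilde{G}_1 := G_1 R_1$, $\tilde{G}_k := R_{k-1}^{\T} G_k R_k$ for $2 \le k \le d-1$, and $\tilde{G}_d := R_{d-1}^{\T} G_d$. Orthogonality of the $R_k$'s gives $\normi{\tilde{G}_k} = \normi{G_k}$, and the $\tilde{G}_k$'s are exact solutions of \eqref{eq:alg-CDEs} with rotated coefficients $A_{k-1}R_{k-1}$ and right-hand sides $B_k R_k$. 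The perturbed cores $\hat{G}_k$ are therefore the solutions of the same rotated system perturbed by $\Delta A_{k-1}$ and $\Delta B_k$ just bounded. Choosing $\gamma$ in \eqref{eq:Delta-p-condition} so that both perturbations are at most $\delta\beta$ (with $\beta$ as in Lemma \ref{lem:perturbation}; note $c_A$ and $c_G$ are invariant under orthogonal rotation since $\|(A_k R_k)^{\dagger}\| = \|A_k^{\dagger}\|$ and $\normi{\tilde{G}_k} = \normi{G_k}$), Lemma \ref{lem:perturbation} yields $\normi{\hat{G}_k - \tilde{G}_k} \le \delta\,\normi{G_k}$, which implies $\mathrm{dist}(\hat{G}_k, G_k) \le \delta\,\normi{G_k}$ by the definition of $\mathrm{dist}$.

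The main obstacle is the bookkeeping of the rotations $R_k$: the same rotation produced by Wedin's theorem at \textsc{Trimming} step $k$ must simultaneously control the perturbations of both $B_k$ (on its $\alpha_k$ axis) and $A_k$ (through the subsequent \textsc{SystemForming} step). This compatibility is the reason \textsc{SystemForming} is built from the recursive-sketch $B_k$'s rather than computed from an independent sketch of $p$ (cf.\ the discussion in Section \ref{section:TTS}): the Markov-case contraction commutes with the $\alpha_k$-rotation, so a single rotation ties $\Delta A_k$ to $\Delta B_k$. Once this compatibility is handled, the rest of the proof is tracking polynomial factors of $n$ and $c_P^{-1}$ to match the explicit constant $\gamma$ stated in \eqref{eq:Delta-p-condition}.
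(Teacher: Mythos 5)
Your proposal is correct and follows essentially the same route as the paper's proof: Wedin's theorem to extract compatible rotations $R_k$ for the \textsc{Trimming} outputs, the observation that \textsc{SystemForming} in the Markov case commutes with the $\alpha_k$-rotation so that $\hat A_k \approx A_k R_k$ via the same $R_k$, a rotated exact system whose solutions are the rotated cores, and finally Lemma~\ref{lem:perturbation}. The only cosmetic slips are attributing $\sigma_{r_k}(\tilde\Phi_k)\ge c_P$ to Lemma~\ref{lem:markov} (it is just the definition of $c_P$, since $\tilde\Phi_k = p(x_{k-1},x_k,x_{k+1})$ here) and quoting a factor $n$ rather than the sharper $\sqrt{n}$ in the $A_k$ bound, neither of which affects the argument.
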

\begin{proof}
We apply Algorithm \ref{alg:1} to $p$ and $\hat{p}$ with sketches \eqref{eq:sketchT_def} and \eqref{eq:sketchS_def} as in Theorem \ref{thm:discrete-markov}; the resulting coefficient matrices and right-hand sides of \eqref{eq:alg-CDEs} are
\begin{equation*}
    A_1, \ldots, A_{d - 1}, B_1, \ldots, B_{d - 1}, p(x_{d - 1}, x_{d})
    \quad \text{and} \quad
    \hat{A}_1, \ldots, \hat{A}_{d - 1}, \hat{B}_1, \ldots, \hat{B}_{d - 1}, \hat{p}(x_{d - 1}, x_{d}),
\end{equation*}
respectively. Our goal is to quantify their differences.

Recall that $B_1$ and $\hat{B}_1$ are the first $r_1$ left singular vectors of $p(x_1; x_2)$ and $\hat{p}(x_1; x_2)$, respectively. We apply Wedin's theorem presented in Theorem 2.9 of \cite{ccfm_2021}; if $\|\Delta p(x_1; x_2)\| < \sigma_{r_1}(p(x_1; x_2))$, we can find $R_1 \in O(r_1)$ such that 
\begin{equation*}
    \hat{B}_1(x_1; \alpha_1) = \sum_{a_1 = 1}^{r_1} B_1(x_1; a_1) R_1(a_1; \alpha_1) + E_1(x_1; \alpha_1),
\end{equation*}
and
\begin{equation*}
    \|E_1(x_1; \alpha_1)\| \le \frac{\sqrt{2} \|\Delta p(x_1; x_2)^\top B_1(x_1; a_1)\|}{\sigma_{r_1}(p(x_1; x_2)) - \|\Delta p(x_1; x_2)\|}.
\end{equation*}
In particular, if $\|\Delta p(x_1; x_2)\| \le (1 - 1 / \sqrt{2}) \sigma_{r_1}(p(x_1; x_2))$, using $\|B_1(x_1; a_1)\| = 1$, we have
\begin{equation*}
    \|E_1(x_1; \alpha_1)\| \le \frac{2 \|\Delta p(x_1; x_2)\|}{\sigma_{r_1}(p(x_1; x_2))}.
\end{equation*}

Similarly, for $k = 2, \ldots, d - 1$, if $\|\Delta p(x_{k - 1}, x_{k}; \alpha_{k})\| \le (1 - 1 / \sqrt{2}) \sigma_{r_k}(p(x_{k - 1}, x_{k}; x_{k + 1}))$, we can find $R_k \in O(r_k)$ such that 
\begin{equation*}
    \hat{B}_{k}(x_{k - 1}, x_{k}; \alpha_k) = \sum_{a_k = 1}^{r_{k}} B_{k}(x_{k - 1}, x_{k}; a_{k}) R_{k}(a_k; \alpha_{k}) + E_{k}(x_{k - 1}, x_{k}; \alpha_{k}),
\end{equation*}
and 
\begin{equation*}
    \|E_{k}(x_{k - 1}, x_{k}; \alpha_{k})\| \le \frac{2 \|\Delta p(x_{k-1}, x_{k}; x_{k + 1})\|}{\sigma_{r_k}(p(x_{k - 1}, x_{k}; x_{k + 1}))}.
\end{equation*}
Accordingly, for $k = 2, \ldots, d - 1$,
\begin{equation*}
    \hat{A}_{k}(x_{k}; \alpha_{k}) = \sum_{a_k = 1}^{r_{k}} A_k(x_k; a_k) R_{k}(a_k; \alpha_k) + \sum_{x_{k - 1} = 1}^{n_{k - 1}} E_{k}(x_{k - 1}, x_{k}; \alpha_{k}).
\end{equation*}

Conceptually speaking, we see that the perturbation in the coefficients and right-hand sides of equations \eqref{eq:alg-CDEs} for $G_1, \dots, G_d$ consist of two parts: a rotation and an additive error. We will see that though the rotations affect the individual $G_k$'s, they do not change the final contraction $G_1 \circ \dots \circ G_d,$ and hence do not directly contribute to the pointwise error in the compressed representation of the density. To that end, we define the rotated quantities $\Phi_1^\ast,$ $A_k^\ast,$ and $B_k^\ast$ as follows:
\begin{align*}
    B_1^\ast(x_1, \alpha_1) & := \sum_{a_1 = 1}^{r_1} B_1(x_1, a_1) R_1(a_1, \alpha_1) =: A_1^\ast(x_1, \alpha_1), \\
    B_{k}^\ast(x_{k-1}, x_{k}, \alpha_{k}) & := \sum_{a_k = 1}^{r_k} B_{k}(x_{k-1}, x_{k}, a_{k}) R_{k}(a_{k}, \alpha_{k}), \\
    A_{k}^\ast(x_k, \alpha_k) & := \sum_{x_{k - 1} = 1}^{n_{k - 1}} B_{k}^\ast(x_{k-1}, x_{k}, \alpha_{k}) = \sum_{a_k = 1}^{r_{k}} A_{k}(x_k, a_k) R_{k}(a_k, \alpha_k).
\end{align*}
Now, consider the following equations:
\begin{equation}
\label{eq:rotated}
    \begin{aligned}
        G_1^\ast &= B_1^\ast, \\
        \sum_{\alpha_{k - 1} = 1}^{r_{k - 1}} A_{k - 1}^\ast(x_{k - 1}, \alpha_{k - 1}) G_k^\ast(\alpha_{k - 1}, x_k, \alpha_k) &= B_k^\ast(x_{k-1}, x_k, \alpha_k) \quad k = 2, \ldots, d - 1 \\
        \sum_{\alpha_{d-1} = 1}^{r_{d - 1}} A_{d-1}^\ast(x_{d-1}, \alpha_{d-1}) G_d^\ast(\alpha_{d-1}, x_d) &= p(x_{d-1}, x_{d}).
    \end{aligned}
\end{equation}
These equations can be viewed as the rotated version of the original equations for $G_1,\dots,G_d$. In fact, the solutions are also simply rotated from the original solutions $G_1, \ldots, G_d$ as follows:\footnote{More simply, $G_1^\ast = G_1 R_1$, $G_k^\ast = R_{k - 1}^\top \circ G_k \circ R_k$ for $k = 2, \ldots, d - 1$, and $G_d^\ast = R_{d - 1}^\top G_d$.}
\begin{align*}
    G_1^\ast 
    & = \sum_{a_1 = 1}^{r_1} G_1(x_1, a_1) R_1(a_1, \alpha_1), \\
    G_k^\ast(\alpha_{k - 1}, x_{k}, \alpha_{k}) 
    & = \sum_{a_{k - 1} = 1}^{r_{k -1}} \sum_{a_{k} = 1}^{r_{k}} R_{k - 1}(a_{k - 1}, \alpha_{k - 1}) G_k(a_{k - 1}, x_{k}, a_{k}) R_k(a_{k}, \alpha_{k}) \quad k = 2, \ldots, d - 1, \\
    G_d^\ast(\alpha_{d - 1}, x_{d})
    & = \sum_{a_{d - 1} = 1}^{r_{d - 1}} R_{d - 1}(a_{d - 1}, \alpha_{d - 1}) G_d(a_{d - 1}, x_{d}).
\end{align*}
By definition, it is obvious that $\normi{G_k} = \normi{G_k^\ast}$ for all $k = 1, \ldots, d$ and $G_1\circ \dots \circ G_d = G_1^\ast\circ \dots \circ G_d^\ast$.

We now address the effect of the additive error. As a result of the above discussion, running our algorithm with input $\hat{p}$ amounts to a perturbed version of \eqref{eq:rotated}, where the coefficients and the right-hand sides are perturbed as follows:
\begin{align*}
    & \hat{B}_k = B_k^\ast + \Delta B_k^\ast, \quad \hat{A}_k = A_k^\ast + \Delta A_k^\ast \quad k = 1, \ldots, d - 1, \\
    & \hat{p}(x_{d - 1}; x_{d}) = p(x_{d - 1}; x_{d}) + \Delta p(x_{d - 1}; x_{d}).
\end{align*}
By construction, $\Delta B_1^\ast = \Delta A_1^\ast = E_1$, 
\begin{equation*}
    \Delta B_k^\ast = E_k, \quad \Delta A_k^\ast(x_{k}; \alpha_{k}) = \sum_{x_{k - 1} = 1}^{n_{k - 1}} E_k(x_{k - 1}, x_{k}; \alpha_{k}) \quad k = 2, \ldots, d - 1.
\end{equation*}
We now look for suitable bounds on $\hat{G}_k - G_k^\ast$ for $k = 1, \ldots, d$. In light of Lemma \ref{lem:perturbation}, it suffices to construct suitable bounds for $\|\Delta A_1^\ast\|, \ldots, \|\Delta A_{d-1}^\ast\|$, $\|\Delta B_1^\ast\|_\infty, \ldots, \|\Delta B_{d-1}^\ast\|_\infty$, and $\|\Delta p(x_{d - 1}; x_{d})\|_\infty$. In particular, we claim
\begin{equation}
    \label{eq:bounds-temporary}
    \|E_1\|, \|\Delta A_{2}^\ast\| \dots, \|\Delta A_{d-1}^\ast\|, \|E_1\|_\infty, \dots \|E_{d-1}\|_\infty, \|\Delta p(x_{d - 1}; x_{d})\|_\infty
    \le \beta \delta,
\end{equation}
where $\beta$ is as in Lemma \ref{lem:perturbation}, namely, 
\begin{equation*}
    \beta = \left( \sqrt{8 r n} c_A \left(1+\frac{1}{c_G}\right)\right)^{-1}.
\end{equation*}
Here, we use the fact that $m_k = n_k$ and $\max_{1 \le k \le d - 1} r_k \le n$. Essentially, we have
\begin{equation*}
    \gamma = \frac{c_P}{2n^2(1+c_P)} \beta.
\end{equation*} 
By definition of $G_k^\ast$ and $A_k^\ast$, it is obvious that $c_G = \min_{1 \le k \le d} \normi{G_k} = \min_{1 \le k \le d} \normi{G^\ast_k}$ and $c_A = \max_{1 \le k \le d - 1} \|A_k^\dagger\| = \max_{1 \le k \le d - 1} \|(A_k^\ast)^\dagger\|$. Hence, by Lemma \ref{lem:perturbation}, it suffices to check \eqref{eq:bounds-temporary} to prove that for $k = 1, \ldots, d$,
\begin{equation}
\label{eq:ttttt}
    \frac{\normi{\hat{G}_k - G_k^\ast}}{\normi{G_k^\ast}} \le \delta.
\end{equation}

Let us verify \eqref{eq:bounds-temporary}. First,
\begin{align*}
    & \|\Delta p(x_{d - 1}; x_{d})\|_\infty
    \le \gamma \delta \le \beta \delta.
\end{align*}
Moreover, as we showed above,
\begin{align*}
    & \|E_1\|_\infty 
    \le \|E_1\| 
    \le \frac{2 \|\Delta p(x_1; x_2)\|}{\sigma_{r_1}(p(x_1; x_2))}
    \le \frac{2 n \|\Delta p(x_1; x_2)\|_\infty}{\sigma_{r_1}(p(x_1; x_2))}
    \le \frac{2n}{c_P} \gamma \delta \le \beta \delta.
\end{align*}
For $k = 2, \ldots, d - 1$, we verify $\|\Delta A_k^\ast\| \le n^{1 / 2} \|E_k(x_{k - 1}, x_{k}; \alpha_{k})\|$. Note that $\Delta A_k^\ast = P_k E_k(x_{k - 1}, x_{k}; \alpha_{k})$; here $P_k \in \mathbb{R}^{n_k \times n_k n_{k - 1}} = [I_k, \ldots, I_k]$, where $I_k \in \mathbb{R}^{n_k \times n_k}$ is the identity matrix. Hence, $\|\Delta A_k^\ast\| \le \|P_k\| \|E_k(x_{k - 1}, x_{k}; \alpha_{k})\| \le n^{1 / 2} \|E_k(x_{k - 1}, x_{k}; \alpha_{k})\|$ because $\|P_k\| = \sqrt{n_{k - 1}} \le n^{1 / 2}$. Therefore,
\begin{equation*}
\begin{split}
    \|\Delta A_k^\ast\|, \|E_k\|_\infty
    & \le n^{1 / 2} \|E_{k}(x_{k - 1}, x_{k}; \alpha_{k})\| \\
    & \le \frac{2 n^{1 / 2} \|\Delta p(x_{k-1}, x_{k}; x_{k + 1})\|}{\sigma_{r_k}(p(x_{k - 1}, x_{k}; x_{k + 1}))} \\
    & \le \frac{2 n^{2} \|\Delta p(x_{k-1}, x_{k}; x_{k + 1})\|_\infty}{\sigma_{r_k}(p(x_{k - 1}, x_{k}; x_{k + 1}))} \\
    & \le \frac{2n^{2} \gamma}{c_P} \delta \le \beta \delta.
\end{split}
\end{equation*}
Hence, \eqref{eq:bounds-temporary} is satisfied, thus \eqref{eq:ttttt} holds. By definition of $\mathrm{dist}(\cdot, \cdot)$ and $\normi{\cdot}$, we have for $k = 1, \ldots, d$,
\begin{equation*}
    \frac{\mathrm{dist}(\hat{G}_k, G_k)}{\normi{G_k}} 
    \le \frac{\normi{\hat{G}_k - G_k^\ast}}{\normi{G_k}} 
    = \frac{\normi{\hat{G}_k - G_k^\ast}}{\normi{G_k^\ast}} \le \delta.
\end{equation*}
\end{proof}

The following result on the error of the contraction follows immediately from the previous Proposition, combined with Corollary \ref{cor:eps}.
\begin{theorem}\label{prop:markov-perturbation-total}
Under the assumptions of Theorem \ref{thm:discrete-markov}, let $G_1, \ldots, G_d$ be the cores of $p$ obtained as solutions to \eqref{eq:alg-CDEs}. Suppose we apply Algorithm \ref{alg:1} to the perturbed input $\hat{p} = p + \Delta p$ with sketches \eqref{eq:sketchT_def} and \eqref{eq:sketchS_def} as in Theorem \ref{thm:discrete-markov}; the results are denoted as $\hat{G}_1, \ldots, \hat{G}_d$. Suppose further that for some fixed $\epsilon \in (0, 1)$,   
\begin{equation*}
\begin{split}
    & \|\Delta p(x_1; x_2)\|_\infty, 
    \|\Delta p(x_{1}, x_{2}; x_{3})\|_\infty, \ldots, \|\Delta p(x_{d - 2}, x_{d - 1}; x_{d})\|_\infty
    \|\Delta p(x_{d - 1}; x_{d})\|_\infty \\
    & \le \frac{c_P}{6d n^2(1+c_P)}\left( \sqrt{8 r n} c_A \left(1+\frac{1}{c_G}\right)\right)^{-1} \epsilon,
\end{split}
\end{equation*}
where the constants $n, c_P, c_{G}, c_{A}$ are as in Proposition \ref{prop:perturbation-markov}. Then,
$$\frac{\|\hat{G}_1\circ \dots \circ \hat{G}_d - G_1 \circ \dots \circ G_d\|_\infty}{\normi{G_1}\dots \normi{G_d}} \le \epsilon. $$
\end{theorem}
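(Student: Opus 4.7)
The plan is to reduce the theorem to a direct combination of Proposition~\ref{prop:perturbation-markov} (which controls the per-core error) with Corollary~\ref{cor:eps} (which converts per-core error into contraction error), choosing the parameter $\delta$ in the per-core bound to be $\epsilon/(3d)$ so that the threshold assumed by the corollary is met.

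Concretely, I would first set $\delta := \epsilon/(3d) \in (0,1)$. The hypothesis of the theorem, namely
\begin{equation*}
\|\Delta p(x_{k-1},x_k;x_{k+1})\|_\infty \le \frac{c_P}{6dn^2(1+c_P)}\Bigl(\sqrt{8rn}\,c_A(1+1/c_G)\Bigr)^{-1}\epsilon,
\end{equation*}
is exactly $\gamma\delta$ for the constant $\gamma$ appearing in Proposition~\ref{prop:perturbation-markov}. Hence all the perturbation bounds on marginals required by that proposition are satisfied, and we may invoke it to obtain $\mathrm{dist}(\hat G_k,G_k)/\normi{G_k} \le \delta$ for each $k=1,\ldots,d$.

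The key subtlety is that $\mathrm{dist}$ is defined up to orthogonal rotations on the virtual bond indices, so the bound on $\mathrm{dist}(\hat G_k,G_k)$ does not by itself let us plug the $\hat G_k$ into Lemma~\ref{lem:contraction_bound} against the original $G_k$. Here I would exploit the rotated cores $G_k^\ast$ constructed inside the proof of Proposition~\ref{prop:perturbation-markov}: by the explicit form given there, $G_k^\ast$ is obtained from $G_k$ by applying $R_{k-1}^\top$ on the left bond and $R_k$ on the right bond with $R_k\in O(r_k)$, so by telescoping the contractions we have the gauge-invariance identity
\begin{equation*}
G_1^\ast\circ\cdots\circ G_d^\ast = G_1\circ\cdots\circ G_d,
\qquad \normi{G_k^\ast}=\normi{G_k},
\end{equation*}
and furthermore $\normi{\hat G_k - G_k^\ast}\le \delta\,\normi{G_k^\ast}$. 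In other words, taking $\Delta G_k := \hat G_k - G_k^\ast$, the relative per-core perturbation bound $\delta_k \le \delta$ holds with the $G_k$ of Lemma~\ref{lem:contraction_bound} replaced by the gauge-equivalent $G_k^\ast$.

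Finally I apply Corollary~\ref{cor:eps} to the cores $G_k^\ast$ with their perturbations $\Delta G_k$: since $\max_k \delta_k \le \delta = \epsilon/(3d)$, the corollary yields
\begin{equation*}
\frac{\|\hat G_1\circ\cdots\circ\hat G_d - G_1^\ast\circ\cdots\circ G_d^\ast\|_\infty}{\normi{G_1^\ast}\cdots\normi{G_d^\ast}}\le\epsilon,
\end{equation*}
and then substituting the two gauge-invariance identities above converts this into the claimed bound with $G_1\circ\cdots\circ G_d$ and $\normi{G_k}$ on the right-hand side. The only real work is the bookkeeping in identifying the rotated cores; all the hard analytic steps (Wedin's theorem, the linear-system perturbation bound, and the telescoping expansion of the contraction error) have already been carried out in Proposition~\ref{prop:perturbation-markov} and Lemma~\ref{lem:contraction_bound}.
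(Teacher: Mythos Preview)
Your proposal is correct and follows exactly the approach the paper indicates: the paper states that the theorem ``follows immediately from the previous Proposition, combined with Corollary~\ref{cor:eps},'' and you have spelled out precisely this combination, including the important bookkeeping that one must use the consistently rotated cores $G_k^\ast$ from the proof of Proposition~\ref{prop:perturbation-markov} (rather than the bare $\mathrm{dist}$ bound, which a priori only yields rotations that need not match across adjacent bonds) before invoking Corollary~\ref{cor:eps}.
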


\subsection{Estimation error analysis}
Lastly, we present a precise version of Theorem \ref{thm:markov-estimation-informal}. Recall that our main interest is to apply Algorithm \ref{alg:1} to an empirical density $\hat{p}$ constructed based on $N$ i.i.d.\ samples from some underlying density $p^\star$; letting $\hat{G}_1, \ldots, \hat{G}_d$ be the results of Algorithm \ref{alg:1} applied to $\hat{p}$, we hope to claim $p^\star \approx \hat{G}_1 \circ \cdots \circ \hat{G}_d$. 

Using the previous perturbation result (Proposition \ref{prop:perturbation-markov}), we will quantify a difference between $\hat{G}_k$ and $G_k^\star$, where $G_1^\star, \ldots, G_d^\star$ are the results of Algorithm \ref{alg:1} applied to $p^\star$. The only technicality here is that the perturbed input is not arbitrary, but given as an empirical density. Therefore, the perturbation $\hat{p} - p^\star$ can be represented in terms of the sample size $N$. The following lemma derives a concrete bound on $\hat{p} - p^\star$ using simple concentration inequalities.

\begin{lemma}
\label{lem:concentration}
Let $p^\star \colon [n_1] \times \cdots \times [n_d] \to \mathbb{R}$ be a density. Suppose $\hat{p}$ is an empirical density based on $N$ i.i.d.\ samples from $p^\star$. Let $\Delta p^\star = \hat{p} - p^\star$ and $n = \max_{1 \le k \le d} n_k$, then for any $\eta \in (0, 1)$, the following inequalities hold with probability at least $1 - \eta$:
\begin{align*}
    \|\Delta p^\star(x_1; x_2)\|_\infty & \le \sqrt{\frac{\log(2 n^2 d / \eta)}{2 N}}, \\
    \|\Delta p^\star(x_{k-1}, x_{k}; x_{k + 1})\|_\infty & \le \sqrt{\frac{\log(2 n^3 d / \eta)}{2 N}} \quad k = 2, \ldots, d - 1, \\    
    \|\Delta p^\star(x_{d - 1}; x_{d})\|_\infty & \le \sqrt{\frac{\log(2 n^2 d / \eta)}{2 N}}.
\end{align*}
\end{lemma}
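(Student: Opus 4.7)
The plan is to apply Hoeffding's inequality to each entry of each marginal separately and then combine via a union bound. The key observation is that for any index set $\mathcal{S} \subset [d]$, the empirical marginal satisfies
\begin{equation*}
\hat{p}(x_\mathcal{S}) = \frac{1}{N}\sum_{i=1}^N \mathbb{1}[y_\mathcal{S}^{(i)} = x_\mathcal{S}],
\end{equation*}
so that $N\hat{p}(x_\mathcal{S})$ is a sum of $N$ i.i.d.\ Bernoulli random variables with mean $p^\star(x_\mathcal{S})$. Hoeffding's inequality then yields, for any $t > 0$,
\begin{equation*}
\mathbb{P}\bigl(|\hat{p}(x_\mathcal{S}) - p^\star(x_\mathcal{S})| > t\bigr) \le 2\exp(-2Nt^2).
\end{equation*}

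Next I would set the threshold $t$ separately for the two-variable and three-variable marginals to control the corresponding $\ell_\infty$ norms. For the two-variable marginal $\Delta p^\star(x_1;x_2)$, choosing $t_1 = \sqrt{\log(2n^2 d/\eta)/(2N)}$ gives failure probability $\eta/(n^2 d)$ per entry, and a union bound over the at-most $n^2$ entries yields a failure probability of $\eta/d$ for that marginal. Similarly, for each three-variable marginal $\Delta p^\star(x_{k-1},x_k;x_{k+1})$ with $k=2,\ldots,d-1$, choosing $t_2 = \sqrt{\log(2n^3 d/\eta)/(2N)}$ gives failure probability $\eta/(n^3 d)$ per entry, and union bounding over the $\le n^3$ entries yields probability $\eta/d$ for that marginal. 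The last marginal $\Delta p^\star(x_{d-1};x_d)$ is treated exactly as the first.

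Finally I would take a union bound across the $d$ marginals in question (one for the first pair, $d-2$ for the three-variable windows, one for the last pair). The total failure probability is at most $d \cdot \eta/d = \eta$, giving the joint bound claimed in the lemma. This step is entirely routine; there is no genuine obstacle, since the marginals are elementary averages of $\{0,1\}$-valued random variables, so Hoeffding applies directly and the only bookkeeping is the choice of $t$ so that the three $\log$-arguments in the statement emerge naturally.
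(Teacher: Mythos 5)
Your proof is correct and follows essentially the same route as the paper: Hoeffding's inequality applied to each entry of the relevant marginals (noting that $N\hat{p}(x_\mathcal{S})$ is a sum of $N$ i.i.d.\ Bernoulli indicators), followed by a union bound over the entries of each marginal, and a final union bound over the $d$ marginals ($1 + (d-2) + 1 = d$), which is where the extra factor of $d$ inside the logarithm originates. The paper simply phrases the bookkeeping in the opposite order (deriving a $1-\eta$ bound per marginal first, then rescaling $\eta \to \eta/d$), but the logic and the constants are identical.
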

\begin{proof}
Since $N \hat{p}$ is the sum of $N$ independent Bernoulli random variables, concentration inequalities imply that for any fixed $x_1 \in [n_1]$ and $x_2 \in [n_2]$ and $t \ge 0$,
\begin{equation*}
    \mathbb{P}(|\Delta p^\star(x_1, x_2)| > t) \le 2 e^{- 2 N t^2}.
\end{equation*}
Due to the union bound, $\|\Delta p^\star(x_1; x_2)\|_\infty \le t$ holds with probability at least $1 - 2 n^2 e^{- 2 N t^2}$. Equivalently, 
\begin{equation*}
    \|\Delta p^\star(x_1; x_2)\|_\infty \le \sqrt{\frac{\log(2 n^2 / \eta)}{2 N}}
\end{equation*}
holds with probability at least $1 - \eta$. Similarly, for $k = 2, \ldots, d - 1$, \begin{equation*}
    \|\Delta p^\star(x_{k-1}, x_{k}; x_{k + 1})\|_\infty 
    \le \sqrt{\frac{\log(2 n^3 / \eta)}{2 N}}
\end{equation*}
holds with probability at least $1 - \eta$. Due to the union bound,
\begin{align*}
    \|\Delta p^\star(x_1; x_2)\|_\infty 
    & \le \sqrt{\frac{\log(2 n^2 d / \eta)}{2 N}}, \\
    \|\Delta p^\star(x_{k-1}, x_{k}; x_{k + 1})\|_\infty 
    & \le \sqrt{\frac{\log(2 n^3 d / \eta)}{2 N}} \quad k = 2, \ldots, d - 1, \\    
    \|\Delta p^\star(x_{d - 1}; x_{d})\|_\infty 
    & \le \sqrt{\frac{\log(2 n^2 d / \eta)}{2 N}}
\end{align*}
hold with probability at least $1 - \eta$.
\end{proof}

Hence, we have proved that the perturbation $\hat{p} - p^\star$ is bounded above by $O(1 / \sqrt{N})$. Now, by comparing this bound with the right-hand sides of \eqref{eq:Delta-p-condition}, we obtain a complexity. Again, we will restrict our analysis to the case where $p^\star$ is a Markov model and Algorithm \ref{alg:1} is implemented with sketches \eqref{eq:sketchT_def} and \eqref{eq:sketchS_def} as in Section \ref{section:discrete markov}.

\begin{theorem}
\label{thm:markov-estimation}
Let $p^\star \colon [n_1] \times \cdots \times [n_d] \to \mathbb{R}$ be a Markov density satisfying Condition \ref{cond:transition} such that the rank of the $k$-th unfolding matrix of $p^\star$ is $r_k$ for each $k = 1, \ldots, d - 1$. Let $G_1^\star, \ldots, G_d^\star$ be the cores of $p^\star$ obtained by applying Algorithm \ref{alg:1} to $p^\star$ with sketches \eqref{eq:sketchT_def} and \eqref{eq:sketchS_def} as in Theorem \ref{thm:discrete-markov}; $A_1^\star, \ldots, A_{d - 1}^\star$ are the resulting coefficient matrices in \eqref{eq:alg-CDEs}.

Now, let $\hat{p}$ be an empirical density based on $N$ i.i.d.\ samples from $p^\star$. Let $\hat{G}_1, \ldots, \hat{G}_d$ be the results of applying Algorithm \ref{alg:1} to $\hat{p}$ with sketches \eqref{eq:sketchT_def} and \eqref{eq:sketchS_def} as in Theorem \ref{thm:discrete-markov}. Given $\delta \in (0, 1)$ and $\eta \in (0, 1)$, suppose 
\begin{equation}
\label{eq:sample-comlexity}
    N \ge 16 c_A^2 \left(1+\frac{1}{c_G}\right)^2 \left(1+\frac{1}{c_P}\right)^2 \frac{n^5 r \log(2 n^3 d / \eta)}{\delta^2},
\end{equation}
where 
\begin{itemize}    
    \item $n = \max_{1 \le k \le d} n_{k}$,
    \item $c_P = \sigma_{r_1}(p^\star(x_1; x_2)) \wedge \min_{k = 2, \ldots, d - 1} \sigma_{r_{k}}(p^\star(x_{k - 1}, x_{k}; x_{k + 1}))$,
    \item $c_{G} = \min_{1\le k \le d} \normi{G_k^\star}$,
    \item $c_A = 1 \vee \max_{1 \le k \le d - 1} \|(A_k^\star)^\dagger\|$.
\end{itemize}
Then, 
\begin{equation*}
    \frac{\mathrm{dist}(\hat{G}_k, G_k^\star)}{\normi{G_k^\star}} \le \delta \quad \forall k = 1, \ldots, d
\end{equation*}
with probability at least $1 - \eta$.
\end{theorem}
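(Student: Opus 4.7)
The plan is to combine the concentration result of Lemma~\ref{lem:concentration} with the deterministic perturbation bound of Proposition~\ref{prop:perturbation-markov}. First, I would invoke Lemma~\ref{lem:concentration} to get that, with probability at least $1 - \eta$, the relevant three- and two-variable marginals of $\hat p$ are uniformly close to those of $p^\star$; specifically, using the largest of the three bounds provided,
\begin{equation*}
    \|\hat p(x_{k-1}, x_k; x_{k+1}) - p^\star(x_{k-1}, x_k; x_{k+1})\|_\infty
    \;\le\; \sqrt{\tfrac{\log(2 n^3 d / \eta)}{2N}}
\end{equation*}
simultaneously for $k = 2, \ldots, d-1$ (and an analogous, strictly smaller bound for the two-variable boundary cases), since the empirical distribution of the two- or three-variable marginals is a sum of Bernoulli indicators whose deviations are already captured by the lemma.

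Next, I would feed these high-probability bounds into Proposition~\ref{prop:perturbation-markov}. That proposition guarantees
$\mathrm{dist}(\hat G_k, G_k^\star)/\normi{G_k^\star} \le \delta$ for every $k$ as soon as each of the listed perturbation quantities does not exceed $\gamma\delta$, where
\begin{equation*}
    \gamma \;=\; \frac{c_P}{2 n^2 (1 + c_P)} \left(\sqrt{8 r n}\, c_A \left(1 + \frac{1}{c_G}\right)\right)^{-1}.
\end{equation*}
Thus it suffices to impose $\sqrt{\log(2 n^3 d / \eta)/(2N)} \le \gamma\delta$, i.e. $N \ge \log(2 n^3 d / \eta)/(2 \gamma^2 \delta^2)$.

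Finally, I would verify the stated sample complexity by direct algebra:
\begin{equation*}
    \frac{1}{2\gamma^2}
    \;=\; \frac{32\, n^5 r\, c_A^2 (1 + 1/c_G)^2 (1 + c_P)^2 / c_P^2}{2}
    \;=\; 16\, c_A^2 \left(1 + \tfrac{1}{c_G}\right)^{\!2}\!\left(1 + \tfrac{1}{c_P}\right)^{\!2} n^5 r,
\end{equation*}
using $(1+c_P)/c_P = 1 + 1/c_P$. This matches precisely the right-hand side of \eqref{eq:sample-comlexity}, so condition \eqref{eq:sample-comlexity} implies the required deterministic hypothesis of Proposition~\ref{prop:perturbation-markov} on the event of probability at least $1 - \eta$ supplied by Lemma~\ref{lem:concentration}.

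There is essentially no hard step here: the theorem is a packaging of two already-proved results, and the only nontrivial part is arithmetic bookkeeping of the constants $n, r, c_A, c_G, c_P$. I note that Condition~\ref{cond:transition} is not invoked \emph{inside} the proof; its role is external, ensuring that $c_P, c_G, c_A$ do not degrade with $d$, which is what makes the resulting sample complexity scale as $\log(d)$ rather than polynomially in $d$. The only minor point of care is that Lemma~\ref{lem:concentration} already has the union bound over $d$ built in, so one should not double-count a factor of $d$ when combining it with Proposition~\ref{prop:perturbation-markov}.
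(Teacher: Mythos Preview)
Your proposal is correct and matches the paper's own proof essentially line for line: invoke Lemma~\ref{lem:concentration} to bound the marginal perturbations by $\sqrt{\log(2n^3 d/\eta)/(2N)}$, feed this into the deterministic hypothesis $\gamma\delta$ of Proposition~\ref{prop:perturbation-markov}, and check that the resulting inequality is algebraically equivalent to \eqref{eq:sample-comlexity}. Your additional observations---that Condition~\ref{cond:transition} plays no role inside the argument and that the union bound over $d$ is already absorbed into Lemma~\ref{lem:concentration}---are accurate and align with Remark~\ref{rmk:constants}.
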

\begin{proof}
Due to Proposition \ref{prop:perturbation-markov}, it suffices to show that $N$ satisfies 
\begin{equation*}
    \sqrt{\frac{\log(2 n^3 d / \eta)}{2 N}} 
    \le \frac{c_P}{2n^2(1+c_P)}\left( \sqrt{8 r n} c_A \left(1+\frac{1}{c_G}\right)\right)^{-1} \delta,
\end{equation*}
which is equivalent to \eqref{eq:sample-comlexity}.
\end{proof}

In addition, using Proposition \ref{prop:markov-perturbation-total}, we obtain the following sample complexity for bounding the error of the contraction.

\begin{theorem}\label{thm:p_for_markov}
Let $p^\star \colon [n_1] \times \cdots \times [n_d] \to \mathbb{R}$ be a Markov density satisfying Condition \ref{cond:transition} such that the rank of the $k$-th unfolding matrix of $p^\star$ is $r_k$ for each $k = 1, \ldots, d - 1$. Let $G_1^\star, \ldots, G_d^\star$ be the cores of $p^\star$ obtained by applying Algorithm \ref{alg:1} to $p^\star$ with sketches \eqref{eq:sketchT_def} and \eqref{eq:sketchS_def} as in Theorem \ref{thm:discrete-markov}; $A_1^\star, \ldots, A_{d - 1}^\star$ are the resulting coefficient matrices in \eqref{eq:alg-CDEs}.

Now, let $\hat{p}$ be an empirical density based on $N$ i.i.d.\ samples from $p^\star$. Let $\hat{G}_1, \ldots, \hat{G}_d$ be the results of applying Algorithm \ref{alg:1} to $\hat{p}$ with sketches \eqref{eq:sketchT_def} and \eqref{eq:sketchS_def} as in Theorem \ref{thm:discrete-markov}. Given $\epsilon \in (0, 1)$ and $\eta \in (0, 1)$, suppose 

\begin{equation*}
    N \ge 144 c_A^2 \left(1+\frac{1}{c_G}\right)^2 \left(1+\frac{1}{c_P}\right)^2 \frac{d^2 n^5 r \log(2 n^3 d / \eta)}{\epsilon^2},
\end{equation*}
where 
\begin{itemize}    
    \item $n = \max_{1 \le k \le d} n_{k}$,
    \item $c_P = \sigma_{r_1}(p^\star(x_1; x_2)) \wedge \min_{k = 2, \ldots, d - 1} \sigma_{r_{k}}(p^\star(x_{k - 1}, x_{k}; x_{k + 1}))$,
    \item $c_{G} = \min_{1\le k \le d} \normi{G_k^\star}$,
    \item $c_A = 1 \vee \max_{1 \le k \le d - 1} \|(A_k^\star)^\dagger\|$.
\end{itemize}
Then,
$$\frac{\|\hat{G}_1\circ \dots \circ \hat{G}_d - G_1^\star \circ \dots \circ G_d^\star\|_\infty}{\normi{G_1^\star}\dots \normi{G_d^\star}} \le \epsilon$$
with probability at least $1 - \eta$.
\end{theorem}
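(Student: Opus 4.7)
The plan is to combine two results already in hand: Theorem~\ref{prop:markov-perturbation-total}, which converts a deterministic bound on the relevant marginal perturbations of the input into a bound on the contracted TT error, and Lemma~\ref{lem:concentration}, which controls the perturbations $\hat{p}-p^\star$ on those same marginals with high probability. The statement is essentially a corollary obtained by threading these two together and verifying the algebra.

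First, I would apply Lemma~\ref{lem:concentration} to the empirical density $\hat{p}$. This yields, on an event of probability at least $1-\eta$, the simultaneous bounds
\begin{equation*}
    \|\Delta p^\star(x_1;x_2)\|_\infty,\ \|\Delta p^\star(x_{k-1},x_k;x_{k+1})\|_\infty,\ \|\Delta p^\star(x_{d-1};x_d)\|_\infty \ \le\ \sqrt{\frac{\log(2n^3 d/\eta)}{2N}}
\end{equation*}
for all $k=2,\ldots,d-1$, where the use of $n^3$ (rather than $n^2$ for the endpoints) is harmless and gives a single uniform bound.

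Second, I would verify that the hypothesized lower bound on $N$ forces this perturbation size to lie below the threshold required by Theorem~\ref{prop:markov-perturbation-total}, namely
\begin{equation*}
    \frac{c_P}{6 d n^2 (1+c_P)}\left(\sqrt{8rn}\, c_A\!\left(1+\tfrac{1}{c_G}\right)\right)^{-1}\epsilon.
\end{equation*}
This reduces to squaring both sides of the inequality $\sqrt{\log(2n^3 d/\eta)/(2N)} \le \mathrm{threshold}$, multiplying through, and rearranging for $N$. Using the identity $(1+c_P)^2/c_P^2 = (1+1/c_P)^2$, the required lower bound simplifies to exactly
\begin{equation*}
    N \ge 144\, c_A^2 \left(1+\tfrac{1}{c_G}\right)^{2}\!\left(1+\tfrac{1}{c_P}\right)^{2}\frac{d^{2} n^{5} r \log(2n^3 d/\eta)}{\epsilon^{2}},
\end{equation*}
matching the hypothesis of the theorem.

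Finally, on the high-probability event identified in the first step, the perturbation hypothesis of Theorem~\ref{prop:markov-perturbation-total} is met, so the conclusion $\|\hat{G}_1\circ\cdots\circ\hat{G}_d - G_1^\star\circ\cdots\circ G_d^\star\|_\infty / (\normi{G_1^\star}\cdots\normi{G_d^\star}) \le \epsilon$ holds with probability at least $1-\eta$. There is no serious obstacle here: all the nontrivial perturbation-theoretic work (the propagation through SVD via Wedin's theorem, the unitary-freedom bookkeeping via $\mathrm{dist}(\cdot,\cdot)$, the linear-system stability, and the Lipschitz bound on tensor contractions) has already been done in Proposition~\ref{prop:perturbation-markov}, Lemma~\ref{lem:perturbation}, and Lemma~\ref{lem:contraction_bound}; the only thing to check is that the constant $144$ and the factor $d^2$ come out correctly, which they do precisely because the threshold in Theorem~\ref{prop:markov-perturbation-total} contains an extra factor of $1/(3d)$ relative to the per-core bound in Proposition~\ref{prop:perturbation-markov}, producing $(6d)^2 = 36d^2$ after squaring and an additional factor of $4$ from the $1/(2N)$ inside the square root.
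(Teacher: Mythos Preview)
Your proposal is correct and follows exactly the approach the paper takes: the paper itself does not write out a proof for this theorem but simply says it is obtained ``using Proposition~\ref{prop:markov-perturbation-total}'', and your argument is precisely the analogue of the short proof of Theorem~\ref{thm:markov-estimation} with Proposition~\ref{prop:perturbation-markov} replaced by Theorem~\ref{prop:markov-perturbation-total}. The algebraic check you carry out for the constant $144$ and the factor $d^2$ is accurate.
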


\begin{remark}
    \label{rmk:constants}
    In Theorems \ref{thm:markov-estimation} and \ref{thm:p_for_markov}, notice that the constants $c_P, c_G, c_A$ are independent of $d$; to see this, observe that they are determined by the marginals of $p^\star$, namely, $p^\star(x_1; x_2)$ and $p^\star(x_{k - 1}, x_k; x_{k + 1})$, which are independent of $d$ under Condition \ref{cond:transition}. Therefore, we obtain Theorem \ref{thm:markov-estimation-informal} and Corollary \ref{cor:total}, where the upper bounds hide those constants under the ``big-$O$'' notation as they are independent of $d$. Meanwhile, notice that Theorems \ref{thm:markov-estimation} and \ref{thm:p_for_markov} are valid for $p^\star$ that may not satisfy Condition \ref{cond:transition}; in such a case, the constants $c_P, c_G, c_A$ may depend on $d$ in principle. Extensive numerical experiments, however, suggest that the constants $c_P, c_G, c_A$ are often nearly independent of $d$ for a broad class of Markov models that may not satisfy Condition \ref{cond:transition}, such as the Ginzburg-Landau model used in Section \ref{section:numerical}.
\end{remark}
\bibliographystyle{apalike}
\bibliography{ref}
\end{document}